\setlist[enumerate]{leftmargin=.5in}
\setlist[itemize]{leftmargin=.5in}
\crefname{hypothesis}{Hypothesis}{Hypotheses}
\title{Spectral submanifolds of the Navier-Stokes equations}
\author{Gergely Buza\thanks{Department of Applied Mathematics and Theoretical Physics, University of Cambridge, Cambridge CB3 0WA, UK
  (\email{gb643@cam.ac.uk}).}
}
\newcommand*{\addFileDependency}[1]{
  \typeout{(#1)}
  \@addtofilelist{#1}
  \IfFileExists{#1}{}{\typeout{No file #1.}}
}
\begin{document}

\maketitle

\begin{abstract}
Spectral subspaces of a linear dynamical system identify a large class of invariant structures that highlight/isolate the dynamics associated to select subsets of the spectrum. The corresponding notion for nonlinear systems is that of spectral submanifolds --  manifolds invariant under the full nonlinear dynamics that are determined by their tangency to spectral subspaces of the linearized system. In light of the recently-emerged interest in their use as tools in model reduction, we propose an extension of the relevant theory to the realm of fluid dynamics. We show the existence of a large (and the most pertinent) subclass of spectral submanifolds and foliations - describing the behaviour of nearby trajectories - about fixed points and periodic orbits of the Navier-Stokes equations. Their uniqueness and smoothness properties are discussed in detail, due to their significance from the perspective of model reduction. The machinery is then put to work via a numerical algorithm developed along the lines of the parameterization method, that computes the desired manifolds as power series expansions. Results are shown within the context of 2D channel flows.
\end{abstract}

\begin{keywords}
  Navier-Stokes, Invariant manifolds, Model reduction.
\end{keywords}

\begin{AMS}
 37L25, 37L65.
\end{AMS}

\section{Introduction}

The question of existence concerning 
lower-dimensional representations of evolutionary systems
is 
central to 
many application-oriented subdisciplines of mathematical physics.
Within the engineering literature, the resulting objects are
usually termed \textit{reduced order models} (ROMs)
that 
serve the purpose 
of mitigating computational 
costs.
The concept is particularly topical
in fluid dynamics,
a field restrained by
an ever increasing demand for large-scale, complex numerical calculations
surpassing 
the rate at which computers have evolved.

Current state-of-the-art techniques popular amongst the applied fluids community
are standard Galerkin-type projections (including proper orthogonal decomposition (POD) \cite{PODholmes_lumley_berkooz_1996}),
and Koopman-mode reduction \cite{mezic2013Koopman_review} (via dynamic mode decomposition (DMD) \cite{Schmid2022DMD}).
Galerkin-based approaches project onto a physically relevant 
linear subspace (identified by the specific method, e.g.\ POD) of the phase space 
-- inherently not invariant under the nonlinear dynamics.
To combat the lack of invariance, these techniques rely on the chosen linear subspace having a large enough dimension
to render the discarded dynamics irrelevant.
Koopman/DMD is perhaps a more refined option but still assumes 
linearizability of the underlying dynamics 
(more precisely, the existence of a linearizing semiconjugacy \cite{KVALHEIM2021}), 
which greatly reduces  
its domain of validity within phase space.
Indeed, by design, this domain cannot contain more than a single invariant set (the origin) \cite{page_kerswell_2019}, 
which,
compared with our current understanding of turbulence, seems rather restrictive.

Appropriate nonlinear ROMs are sought as submanifolds of the phase space satisfying some suitable conditions,
which ensure that the resulting ROM is 
robust and representative of all neighboring dynamics.
Invariance, attractivity, persistence and uniqueness are often named 
to be desirable qualities for ROMs to have \cite{Haller2017}.
Of the aforementioned four, persistence and uniqueness
are satisfied by most methods currently in practice.
However, invariance turns out to be the most advantageous one to have (but perhaps the most cumbersome to achieve), 
for even a one-dimensional invariant manifold is an exact trajectory of the full system, thereby eliminating the usual dimensional requirements of non-invariant ROMs recalled above for the case of Galerkin projections.

In the realm of
 analysis concerning fluid dynamics,
the search for such 
invariant
manifolds 
began
shortly after the discovery of gobal attractors in the two-dimensional Navier-Stokes equations \cite{Foias1967,Ladyzhenskaya1975}.
Initial attempts were focused on globally attracting invariant (Lipschitz) manifolds that would encompass the attractor -- termed \textit{inertial manifolds}.
The existence of inertial manifolds was confirmed for a few of evolutionary PDEs including the reaction-diffusion equation \cite{FOIAS1988}, but has remained inconclusive for the case of the Navier-Stokes system, due to an inadequate ratio between the spectral gap and the Lipschitz constant \cite{robinson2001}. 
There exist a few recommendations for numerical procedures, however, that attempt to approximate the inertial manifold (even for the case of the Navier-Stokes system \cite{TITI1990})
and have proven to be useful in
a number of applications, including the Kuramoto-Sivashinsky equation \cite{JOLLY1990}.

If the dynamical region of interest involves only a select few fixed points or periodic orbits (commonly blanket termed simple invariant sets/solutions in the applied fluids literature),
localized approaches may be expected to perform better.
This is often the case in canonical flow geometries (e.g., plane-Couette and channel/pipe flows)
at moderate Reynolds numbers, where even the global attractor can be written as
a finite union of unstable manifolds attached to simple invariant solutions. 
The goal of the present work is to prove the existence of 
local invariant manifolds that 
form appropriate nonlinear ROMs according to the above definition, 
for the case of the Navier-Stokes equations. 
Here, the evolution of the reduced model is obtained upon restricting the Navier-Stokes semiflow to the invariant manifold in question.
We shall utilize invariant manifold theorems of Chen et al.~\cite{chen97} to obtain the ROMs of interest --
 foliation results from the same authors \cite{chen97} then establish the local attractivity of these ROMs in some generalized sense.
By its nature, the local theory allows for control over the Lipschitz constant, and hence the spectral gap issue will no longer hinder the derivation of existence results.
Moreover, it also avoids the need for globally existing solutions (in time), thereby enabling a seamless extension of the theory to the three-dimensional case.
Given their prevalence in standard flow geometries, we will also extend our results to cover invariant manifolds about periodic orbits.

Previous approaches in this direction include the works of Foias and Saut~\cite{FoiasSaut84a,FoiasSaut84b} and Gallay and Wayne~\cite{Gallay2001InvariantMA,Gallay2002_3d}.
The early attempts of Foias and Saut~\cite{FoiasSaut84a,FoiasSaut84b} obtained invariant manifolds about the origin of the unforced Navier-Stokes system (where the global attractor is the origin)
corresponding to the leftmost, strong-stable part of the spectrum.
These would not be suitable candidates for the reduced order modeling purposes herein as  
they are not attracting
-- nonetheless, we obtain them as a special case. 
The procedure of Gallay and Wayne~\cite{Gallay2001InvariantMA} is more in line with the present work,
as it is
similarly
reliant on the results of Chen et al.~\cite{chen97}.
The authors therein consider (unforced) flows on unbounded domains via the vorticity formulation, and compute slow manifolds about the sole fixed point at the origin to obtain asymptotics of the Oseen vortex. 
In this paper, we instead deal with the
(perhaps simpler) setting of bounded domains,
but
consider invariant manifolds about arbitrary
nontrivial fixed points and periodic orbits.

For the case of ODEs,
the local theory of invariant manifolds around fixed points (and invariant tori)
has now reached a fully-developed state with the emergence of the parameterization method \cite{Cabre2003a,Cabre2003b,Cabre2005,HARO2006_rigorous,HARO2006_numerical}.
The technique also benefits from
numerous computational advantages
(over traditional graph transform methods) \cite{haro2016,Mireles2015,BergMireles2016,Gonzalez2022paramfem}, 
and hence has been 
implemented
in our numerical procedure.
Perhaps among its most appealing qualities, the resulting algorithm is constrained to
converge
to a unique smoothest submanifold within the discretized setting (see Theorem~1.1, \cite{Cabre2003a}), thereby eliminating the divergence/ambiguity issues present in e.g.\ center manifold calculations.

The parameterization method has since been adapted to the setting of mechanical systems by Haller and colleagues \cite{Haller2016}, who have
termed the resulting invariant manifolds as \textit{spectral submanifolds}, and have used them for model reduction purposes ever since,
with their range of applicability
now reaching systems 
of up to $O(10^5)$ degrees-of-freedom \cite{Ponsioen2020,shobhit2022}. 
In some sense, our aim - via extending the relevant theory -
is to enable a smooth transition to the setting of Navier-Stokes,
so that one day, this technique can act as a
powerful reduction tool in the environment that perhaps needs it the most.
Work towards the latter has already begun by the same group \cite{kaszas2022,Cenedese2022} with the use of data-driven methods, extrapolating beyond the current theoretical foundations of the field -- we fill the remaining gaps in here.

In particular,
the paper's main purpose is 
to relieve the tension
upheld by concerns regarding the existence of spectral submanifolds in the full, infinite-dimensional phase space of the Navier-Stokes system. 
We provide
rigorous existence and uniqueness results for
spectral submanifolds and foliations about arbitrary fixed points and periodic orbits of the Navier-Stokes equations with time-independent forcing on bounded domains (and also for the case of channel flows).
As a result, a versatile class of ROMs is solidified into the arsenal of tools at the disposal of fluid dynamicists, satisfying the sought-after invariance property.  
Its versatility stems from the fact that the choice of spectral subset gives control over the decay rate towards the associated spectral submanifold.
The methodology is best suited to tackle problems concerning the vicinity of an equilibrium, but the domain of validity of the computed submanifolds could very well include larger subsets of the phase space. 
In the present work, we merely provide a glimpse into the possible range of applications by extracting a few heteroclinic connections between travelling wave solutions in a channel, but we envision the full range to encompass much more.
Notably, we believe, based on the examples herein and our general experience, that spectral submanifolds would perform well in identifying edge states (those mediating between the laminar and more complex states),
and in deciphering 
how trajectories approach a stable equilibrium,
which could serve valuable in the study of mixing flows, for instance.

It should be noted that the numerical procedures to approximate such structures are by no means new, even to the fluids literature.
Similar procedures have also aided the investigation of bifurcation problems in PDEs early on in their development \cite{coullet1983amplitude,arneodo1985dynamics}.
These studies have focused on expressing via polynomial expansions 
some distinguished 
manifolds (for the most part, the center manifold), assuming a priori their existence.
Particular attention should be given to the work of Coullet and Spiegel \cite{coullet1983amplitude}, which could be seen as a precursor to (the approximate theory of) the parameterization method.
These initial efforts
 have since been expanded on through a sequence of works,
 improving both the methodology itself \cite{roberts1989appropriate,roberts1996low} and also its computational aspects \cite{roberts2014book,roberts2014dynamical,roberts2015macroscale}.

The remainder of the paper is organized as follows.
In Section~\ref{sect:functional_setup}, we recall the functional setup required to pose the Navier-Stokes system as an evolution problem, and give the definitions of spectral submanifolds and their corresponding foliations.
Then, Section~\ref{sect:semiflow} establishes the
existence and smoothness of the local semiflow on the specific function space considered (essentially the Sobolev space $W^{1,2}$).
Section~\ref{sect:manifolds} contains the main theoretical body of the paper, with all existence and uniqueness results concerning the desired manifolds and their foliations (see Theorem~\ref{thm:summary} for a summary).
To introduce the foundations underpinning our numerical procedure, the parameterization method is recalled in Section~\ref{sect:parameterization}.
Three examples are then shown in Section~\ref{sect:channelflow}: two in the context of Newtonian channel flows and one for an Oldroyd-B (viscoelastic) fluid.
In Section~\ref{sect:channelflow}, we simply approximate within a discretized setting the submanifolds constructed in Section~\ref{sect:manifolds}, and hence it can be read independently of all prior sections (apart from the notational conventions of Section~\ref{sect:functional_setup}).
Finally, a brief discussion follows detailing the computational aspects and a few alternative routes of implementation (Section~\ref{sect:discussion}).

\section{Preliminaries} 
\label{sect:functional_setup}

We begin by transforming the Navier-Stokes system to an evolution equation on an appropriate function space \cite{Fujita1964}. 
Take $\Omega \subset \mathbb{R}^d$, $d = 2,3$, to be a bounded domain with piecewise smooth, rigid boundary $\partial \Omega$.
Fluid enclosed within $\Omega$ obeys the Navier-Stokes equations
\begin{subequations}
\begin{gather}
    \partial_t v - \frac{1}{Re} \Delta v + (v \cdot \nabla ) v + \nabla p = f \quad \text{in } \Omega, \\
    \nabla \cdot v = 0 \quad \text{in } \Omega, \\
    v = 0 \quad \text{on } \partial \Omega.
\end{gather}\label{eq:NSE}\end{subequations}
Here, $Re$ denotes the Reynolds number, defined as the ratio of inertial forces to viscous forces acting within the fluid; $v$ and $p$ are the sought velocity and pressure fields, respectively; $f$ denotes the forcing (assumed to be time-independent in this work).
We examine the behaviour of solutions in the vicinity of a steady state 
$(U,P)$\footnote{In what follows, we will dispose of the explicit reference to the pressure field $P$ when referring to steady states to keep the notation uncluttered.
It should nonetheless be understood that there is always an associated pressure field to a steady state $U$.}
(a time-independent solution of \eqref{eq:NSE}), which we assume to be smooth (automatically satisfied if $\partial \Omega$ and $f$ are $C^\infty$).
The perturbation velocity field $u = v - U$
and pressure field $q = p - P$ satisfy
the equation
\begin{subequations}
\begin{gather}
    \partial_t u - \frac{1}{Re} \Delta u + (U \cdot \nabla ) u + (u \cdot \nabla ) U + (u \cdot \nabla ) u + \nabla q = 0 \quad \text{in } \Omega, \label{eq:NSE_local_mom}\\
    \nabla \cdot u = 0 \quad \text{in } \Omega, \\
    u = 0 \quad \text{on } \partial \Omega.
\end{gather}
\label{eq:NSE_local}
\end{subequations}

Denote by $L^p_\sigma$ the Banach space obtained by closing the set of smooth, solenoidal vector fields with compact support with respect to the $L^p(\Omega, \mathbb{R}^d)$ norm, i.e.,\footnote{$W^{k,p}_\sigma$ spaces are defined analogously.}
$$
L^p_\sigma := \overline{ \Big\{ \, u \in C^\infty_c (\Omega, \mathbb{R}^d) \; \big\vert \; \mathrm{div} \, u = 0 \, \Big\} }^{L^p(\Omega, \mathbb{R}^d)},
$$
which,
according to the Helmholtz-Leray decomposition,
is complemented in $L^p(\Omega, \mathbb{R}^d)$ by the closure of gradient functions.
The associated projection $P_L : L^p(\Omega, \mathbb{R}^d) \to L^p_\sigma$
is therefore well-defined, bounded and orthogonal.
Upon applying $P_L$
to both sides of the momentum equation \eqref{eq:NSE_local_mom}, we arrive at
the functional evolution equation
\begin{equation}
    \frac{d u}{dt}  =  \mathcal{A}_{U} u - B(u,u),
    \label{eq:main}
\end{equation}
where
\begin{gather}
    \mathcal{A}_{U} u  = \frac{1}{Re} P_L \Delta u - B (U,u) - B(u,U), \label{eq:AU} \\
    B(v,w) = P_L (v \cdot \nabla ) w. \nonumber
\end{gather}
In particular, $\mathcal{A}_0 = \frac{1}{Re} P_L \Delta$ recovers the (scaled) Stokes operator. 

Denote by $H:= L^2_\sigma$ and $V:= W^{1,2}_\sigma$.
We shall take the latter as the phase space to analyze the Navier-Stokes system \eqref{eq:NSE_local}.
The space $H$ is equipped with the standard $L^2$ inner product
\begin{displaymath}
    \langle v,w \rangle_H := \int_\Omega v \cdot w, \qquad v,w \in H,
\end{displaymath}
whereas $V$ is given the inner product
\begin{equation}
    \langle v,w \rangle_V := \langle \nabla v, \nabla w \rangle_H, \qquad v,w \in V,
    \label{eq:Vnorm}
\end{equation}
and hence both are Hilbert spaces.
Note that \eqref{eq:Vnorm} is equivalent to the standard $W^{1,2}$ metric due to the Poincaré inequality.
We will generally denote both elements of $V$ and $V$-valued functions by $u,v,w$ --
the context will always make the distinction clear.

Denote by $\rho (\mathcal{A}_{U})$ the resolvent set of operator $\mathcal{A}_{U}$, i.e.\ the set of $\lambda \in \mathbb{C}$ for which the resolvent operator
\begin{displaymath}
  R_\lambda ( \mathcal{A}_{U}) := \left( \lambda I - \mathcal{A}_{U} \right)^{-1} : L^p_\sigma \to L^p_\sigma
\end{displaymath}
is defined and bounded.
The spectrum of $\mathcal{A}_{U}$, $\sigma ( \mathcal{A}_{U} ) = \mathbb{C} \setminus \rho (\mathcal{A}_{U})$, is characterized by the following theorems.

\begin{theorem}[Spectrum of $\mathcal{A}_{U}$, \cite{yudovich1989}] \label{thm:spect}
  The operator $\mathcal{A}_{U}: \mathrm{dom}_p (\mathcal{A}_{U}) \to L^p_\sigma$ is closed, has compact resolvent and thus a discrete spectrum consisting of eigenvalues $\lambda_1,\lambda_2,\ldots,$ with $\mathrm{Re} \, \lambda_j \to -\infty$, for all $p \in (1,\infty)$. 
  The spectrum of $\mathcal{A}_{U}$ satisfies
  \begin{equation}
    \mathrm{Re} \, \lambda \leq  c +   \max \, \sigma ( \mathcal{A}_0 )  =: \omega_0; \qquad
    | \mathrm{Im} \, \lambda| \leq a \sqrt{\left(-\mathrm{Re} \, \lambda + c \right) Re}+b,
  \label{eq:spectrum_bounds}
  \end{equation} 
  where
  \begin{displaymath}
    a = \max_{x \in \Omega} | U(x)|; \qquad b = \max_{x \in \Omega} | \mathrm{curl} \, U (x) |; \qquad c = \max_{x \in \Omega} \sqrt{\sum_{i,k=1}^d \left( \partial_k U_i (x) \right)^2}.
  \end{displaymath} 
  Moreover, the sequence of eigenvectors and associated vectors of the operator $\mathcal{A}_U$ forms a complete system in $V$, in $L^p_\sigma$ $(1 < p < \infty)$; and $\mathrm{dom}_p \left(\mathcal{A}_U \right) $  $(1 < p < \infty$, in the sense of $\Vert \cdot \Vert_{W^{2,p}})$. 
\end{theorem}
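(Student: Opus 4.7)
The plan is to view $\mathcal{A}_U = \mathcal{A}_0 + K$ as a lower-order perturbation of the (scaled) Stokes operator $\mathcal{A}_0 = \frac{1}{Re} P_L \Delta$, where $K u := -B(U,u) - B(u,U)$ is first order in $u$ with coefficients built from the smooth, bounded field $U$. The Stokes operator on a bounded smooth domain is well known to be closed on $L^p_\sigma$ with domain $W^{2,p}(\Omega) \cap W^{1,p}_0(\Omega) \cap L^p_\sigma$ for $1<p<\infty$, self-adjoint and nonpositive on $L^2_\sigma$, sectorial on $L^p_\sigma$, and to have compact resolvent by the Rellich--Kondrachov compactness of this embedding. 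Since $K$ maps $W^{1,p}_\sigma$ continuously into $L^p_\sigma$, interpolation between $L^p_\sigma$ and $\mathrm{dom}_p(\mathcal{A}_0)$ shows $K$ is $\mathcal{A}_0$-bounded with relative bound zero; Kato's perturbation theorem then delivers closedness of $\mathcal{A}_U$ on $\mathrm{dom}_p(\mathcal{A}_0)$ (giving the stated $W^{2,p}$-characterization of the domain), preserves sectoriality, and transmits compactness of the resolvent via its factorization through the compact embedding. Riesz--Schauder theory then yields the discrete spectrum of finite-multiplicity eigenvalues accumulating only at $|\lambda|=\infty$.

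For the spectral envelope I would work in $L^2_\sigma$ and pair $\mathcal{A}_U u = \lambda u$ against $u$, with $\|u\|_H = 1$. The divergence-free condition on $U$ together with $u|_{\partial\Omega}=0$ forces $\mathrm{Re}\langle B(U,u),u\rangle_H = 0$, so taking real parts reduces the relation to
\[ \mathrm{Re}\,\lambda = \langle \mathcal{A}_0 u, u\rangle_H - \mathrm{Re}\langle B(u,U),u\rangle_H \leq \max\sigma(\mathcal{A}_0) + c, \]
via the Rayleigh--Ritz bound for the self-adjoint $\mathcal{A}_0$ and the pointwise estimate $|\nabla U|\leq c$. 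Rearranging yields $\|\nabla u\|_H^2 \leq Re\,(-\mathrm{Re}\,\lambda + c)$, which feeds into the imaginary part: Cauchy--Schwarz with $|U|\leq a$ gives $|\mathrm{Im}\langle B(U,u),u\rangle_H|\leq a\|\nabla u\|_H$, while the integrand $u_i(\partial_i U_j)\bar{u}_j$ defining $\langle B(u,U),u\rangle_H$ has imaginary part governed by the antisymmetric tensor $\mathrm{Im}(u_i\bar{u}_j)$ contracted with $\partial_i U_j-\partial_j U_i$, a component-wise multiple of $\mathrm{curl}\, U$, producing the $b$-term. Combining gives the stated bound on $|\mathrm{Im}\,\lambda|$.

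For completeness of the root-vector system of $\mathcal{A}_U$ I would invoke a Keldysh-type theorem. Since $\mathcal{A}_0$ is self-adjoint with compact resolvent whose eigenvalues obey the Weyl asymptotics $\lambda_k(\mathcal{A}_0)\sim -Ck^{2/d}$, $R_\mu(\mathcal{A}_0)$ lies in a Schatten class of order strictly greater than $d/2$, and this property survives the relatively compact perturbation $K$. The Keldysh--Gohberg--Krein completeness theorem then asserts that the span of the eigenvectors and associated vectors of $\mathcal{A}_U$ is dense in $L^2_\sigma$. Elliptic bootstrap applied to $(\mathcal{A}_U-\lambda_j)^m\phi=0$ places each root vector in every $W^{k,p}\cap L^p_\sigma$, so completeness transfers to $V$, to $L^p_\sigma$ and to $\mathrm{dom}_p(\mathcal{A}_U)$ in the $W^{2,p}$ topology via standard density and interpolation arguments.

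The principal obstacle, in my view, is not the perturbative analysis yielding closedness, compact resolvent and the quantitative spectral envelope, which is essentially classical, but rather verifying the Schatten-class hypothesis underpinning Keldysh--Gohberg--Krein (which itself leans on the Weyl asymptotics for the Stokes spectrum) and then carefully lifting completeness from the $L^2_\sigma$ setting to the general $L^p_\sigma$ and $W^{2,p}$ topologies advertised in the theorem.
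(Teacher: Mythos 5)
The paper does not prove Theorem~\ref{thm:spect} --- it is quoted directly from Yudovich \cite{yudovich1989} as an input result, with no accompanying proof to compare against. Your sketch is a sensible reconstruction of the classical argument underlying that reference: perturbing the Stokes operator by the $\mathcal{A}_0$-bounded (relative bound zero) lower-order term $K u = -B(U,u)-B(u,U)$ to get closedness, the $W^{2,p}$ domain characterization, and compact resolvent; pairing the eigenvalue relation against $u$ in $L^2_\sigma$, using $\mathrm{div}\,U = 0$ to annihilate $\mathrm{Re}\,\langle B(U,u),u\rangle_H$ and splitting $\nabla U$ into symmetric/antisymmetric parts so that only $\mathrm{curl}\,U$ survives in the imaginary part, which combined with the Dirichlet-form bound $\|\nabla u\|_H^2 \le Re(-\mathrm{Re}\,\lambda + c)$ yields the parabolic envelope \eqref{eq:spectrum_bounds}; and invoking a Keldysh/Gohberg--Krein theorem via the Schatten-class membership of the Stokes resolvent (from its Weyl asymptotics) for completeness of the root vectors. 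Two cautionary remarks: first, the constants in the antisymmetric-tensor/curl contraction need care in $d=3$ (one should verify the factor relating $\sum_{i<j}\mathrm{Im}(u_i\bar{u}_j)(\partial_i U_j-\partial_j U_i)$ to $b=\max|\mathrm{curl}\,U|$); second, and more seriously, the phrase ``standard density and interpolation arguments'' for lifting completeness from $L^2_\sigma$ to $L^p_\sigma$ and to $\mathrm{dom}_p(\mathcal{A}_U)$ in the $W^{2,p}$ topology undersells what is genuinely the hard part --- density of the root-vector span in $L^2_\sigma$ does not by itself give density in $L^p_\sigma$, and one must re-run a Phragm\'en--Lindel\"of-type argument on the $L^p$-resolvent (or apply a Banach-space version of the completeness theorem), as Yudovich does explicitly. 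You correctly flag this as the principal obstacle, which is accurate.
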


\begin{theorem}[Sectoriality of $\mathcal{A}_{U}$, \cite{yudovich1989}] \label{thm:sectorial}
  The operator $\mathcal{A}_{U} : \mathrm{dom}_p (\mathcal{A}_{U}) \to L^p_\sigma$ is sectorial 
  for $1 < p < \infty$.
  That is, there exists a pair $(\omega, \theta) \in \mathbb{R} \times (\pi/2,\pi)$ with $\omega > \omega_0$ such that the sector
\begin{displaymath}
    \Sigma_{\omega,\theta} := \big\{ \, \lambda \in \mathbb{C} \; \big\vert \;
    | \mathrm{arg} (\lambda-\omega) | < \theta \, \big\}
\end{displaymath}
is contained in $\rho(\mathcal{A}_{U})$, and
  \begin{displaymath}
        \Vert R_\lambda (\mathcal{A}_{U}) \Vert_{\mathcal{L} (L^p_\sigma)} \leq \frac{C}{|\lambda - \omega|} 
  \end{displaymath}
  for all $\lambda \in \Sigma_{\omega,\theta}$. Moreover,
  \begin{equation}
    \Vert R_\lambda (\mathcal{A}_{U}) \Vert_{\mathcal{L} \left(L^p_\sigma,W^{2,p}(\Omega,\mathbb{R}^d)\right)} \leq C
    \label{eq:ellipticregularity}
  \end{equation}
  holds uniformly with respect to $\lambda \in \Sigma_{\omega,\theta}$.
\end{theorem}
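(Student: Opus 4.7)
The plan is to reduce the claim to the classical sectoriality of the Stokes operator $\mathcal{A}_0 = Re^{-1} P_L \Delta$ via a perturbation argument, treating the remaining advective terms $\mathcal{P} := \mathcal{A}_U - \mathcal{A}_0 = -B(U,\cdot) - B(\cdot,U)$ as a lower-order perturbation. On bounded domains with sufficiently regular boundary, $\mathcal{A}_0$ is well-known (Solonnikov, Giga--Sohr) to generate a bounded analytic semigroup on $L^p_\sigma$ for $1 < p < \infty$, with spectrum on the negative real axis and $\max \sigma(\mathcal{A}_0) < 0$. For any $\theta_0 \in (\pi/2, \pi)$ and $\omega_1 > \max \sigma(\mathcal{A}_0)$, the sector $\Sigma_{\omega_1, \theta_0}$ lies in $\rho(\mathcal{A}_0)$ with both the resolvent bound $\Vert R_\lambda(\mathcal{A}_0)\Vert_{\mathcal{L}(L^p_\sigma)} \le C/|\lambda - \omega_1|$ and the elliptic regularity estimate $\Vert R_\lambda(\mathcal{A}_0)\Vert_{\mathcal{L}(L^p_\sigma, W^{2,p})} \le C$ holding uniformly on that sector.

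Since $U$ is smooth by assumption, $\mathcal{P}$ is a first-order differential operator with smooth, bounded coefficients, yielding $\Vert \mathcal{P} u\Vert_{L^p} \le C(\Vert \nabla u\Vert_{L^p} + \Vert u\Vert_{L^p})$ for $u \in \mathrm{dom}_p(\mathcal{A}_0)$. Combining the Ehrling-type interpolation inequality $\Vert \nabla u\Vert_{L^p} \le \epsilon \Vert u\Vert_{W^{2,p}} + C_\epsilon \Vert u\Vert_{L^p}$ (available for arbitrary $\epsilon > 0$) with the elliptic regularity of $\mathcal{A}_0$ shows that $\mathcal{P}$ is $\mathcal{A}_0$-bounded with relative bound zero: $\Vert \mathcal{P} u\Vert_{L^p} \le \epsilon \Vert \mathcal{A}_0 u\Vert_{L^p} + C_\epsilon \Vert u\Vert_{L^p}$. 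A standard perturbation theorem for sectorial operators (e.g.\ Pazy, Ch.~3) then ensures $\mathcal{A}_U = \mathcal{A}_0 + \mathcal{P}$ is sectorial on $L^p_\sigma$ with its domain unchanged, on some sector $\Sigma_{\omega, \theta}$ with $\omega > \omega_0$. The elliptic regularity claim \eqref{eq:ellipticregularity} transfers through the factorization $R_\lambda(\mathcal{A}_U) = (I - R_\lambda(\mathcal{A}_0)\mathcal{P})^{-1} R_\lambda(\mathcal{A}_0)$ together with the mapping $R_\lambda(\mathcal{A}_0) : L^p_\sigma \to W^{2,p}$ being bounded uniformly in $\lambda$.

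The main technical point is securing \emph{uniform} convergence of the underlying Neumann series $I + R_\lambda(\mathcal{A}_0)\mathcal{P} + (R_\lambda(\mathcal{A}_0)\mathcal{P})^2 + \ldots$ across the entire sector - down to its vertex, not merely for $|\lambda|$ large. The vanishing relative bound supplied by the Ehrling inequality is precisely what drives $\Vert R_\lambda(\mathcal{A}_0) \mathcal{P}\Vert_{\mathcal{L}(L^p_\sigma)}$ strictly below one after a sufficient rightward shift of $\omega$, ultimately compatible with the spectrum location dictated by Theorem~\ref{thm:spect}.
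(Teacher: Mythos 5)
The paper itself gives no proof of Theorem~\ref{thm:sectorial}: it is stated as a cited result from Yudovich's monograph. Your argument supplies the standard (and correct) derivation. The structure is sound: start from the known sectoriality and elliptic regularity of the Stokes operator $\mathcal{A}_0 = Re^{-1}P_L\Delta$ on $L^p_\sigma$ for $1<p<\infty$; observe that $\mathcal{P} = -B(U,\cdot)-B(\cdot,U)$ is a first-order operator (after Leray projection) with smooth coefficients, hence $\mathcal{A}_0$-bounded with relative bound zero via the Ehrling inequality combined with $W^{2,p}$ elliptic regularity for $\mathcal{A}_0$; apply the sectorial perturbation theorem; and transfer the uniform $\mathcal{L}(L^p_\sigma,W^{2,p})$ resolvent bound through the factorization of $R_\lambda(\mathcal{A}_U)$. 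Your remark on uniform convergence of the Neumann series is the genuine technical crux, and the resolution (shift $\omega$ far enough to the right so that both $\epsilon \Vert \lambda R_\lambda(\mathcal{A}_0)\Vert$ and $C_\epsilon \Vert R_\lambda(\mathcal{A}_0)\Vert$ are uniformly small on the shifted sector) is the correct one. Since the theorem only asserts existence of \emph{some} $\omega > \omega_0$, the rightward shift poses no difficulty of compatibility with Theorem~\ref{thm:spect}. This is almost certainly the route underlying the cited reference, so there is nothing substantive to contrast; the proposal would serve as a self-contained replacement for the citation.
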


\subsection{Spectral submanifolds}

The following segment is devoted to 
establishing a splitting of $V$ into $\mathcal{A}_U$-invariant subspaces corresponding to subsets of the spectrum.
The envisioned spectral submanifolds will be characterized by their tangency to these subspaces.

\begin{figure}
\includegraphics[width=0.7\textwidth]{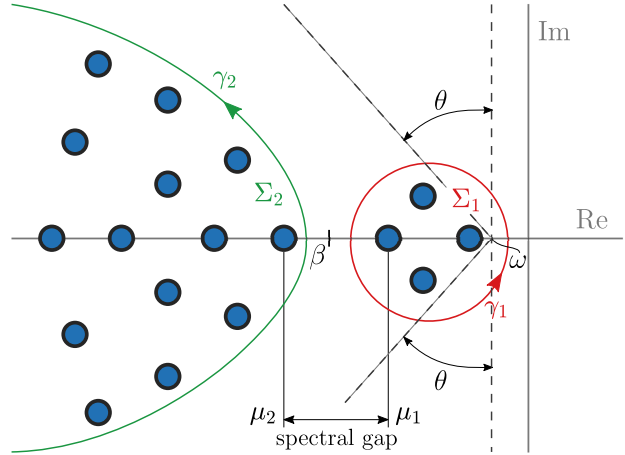}
\centering
\caption{A typical spectrum  of $\mathcal{A}_U$ shown on the complex plane, with $\sigma (\mathcal{A}_U)$ decomposed according to \eqref{eq:sugma}.
}
\label{fig:spectrum_init}
\end{figure}

For any $\beta < \max \mathrm{Re} \, \sigma ( \mathcal{A}_U ),$  $\beta \notin \mathrm{Re} \, \sigma ( \mathcal{A}_U )$, we may
decompose the spectrum into two disjoint components $\sigma ( \mathcal{A}_U ) = \Sigma_1 \cup \Sigma_2$ such that 
\begin{equation}
  \Sigma_1 = \left\{ \, \lambda \in \sigma ( \mathcal{A}_U ) \; \vert \; \mathrm{Re} (\lambda) > \beta_1 \, \right\}, \qquad 
    \Sigma_2 = \left\{ \, \lambda \in \sigma ( \mathcal{A}_U ) \; \vert \; \mathrm{Re} (\lambda) < \beta_2 \, \right\},
    \label{eq:sugma}
\end{equation}
with $\beta_1>\beta>\beta_2$ (see Figure~\ref{fig:spectrum_init}). 
Set $\mu_1:= \min \mathrm{Re} \, \Sigma_1$; $\mu_2:= \max \mathrm{Re} \, \Sigma_2$.
The difference $\mu_1 - \mu_2$ is called the spectral gap, which, roughly speaking, controls the separation of Lyapunov exponents for trajectories inside of, and towards, the invariant manifold to be constructed. 
Define the projection $P_{\Sigma_1} : V \to \mathrm{im}_V ( P_{\Sigma_1})$ via holomorphic functional calculus
\begin{displaymath}
  P_{\Sigma_1} = \frac{1}{2 \pi \mathrm{i}} \int_{\gamma_1} R_z ( \mathcal{A}_U) dz,
\end{displaymath}
with $\gamma_1$ a smooth curve 
encircling $\Sigma_1$ within an isolated neighborhood (see Figure~\ref{fig:spectrum_init}).
Then its image $\mathrm{im}_V (P_{\Sigma_1}  ) \subset \mathrm{dom}_V ( \mathcal{A}_U)$ is finite dimensional and $\mathcal{A}_U$-invariant, the operator $\mathcal{A}_U P_{\Sigma_1}: \mathrm{im}_V (P_{\Sigma_1} ) \to \mathrm{im}_V (P_{\Sigma_1}  )$ is bounded with spectrum $\sigma (\mathcal{A}_U P_{\Sigma_1}) = \Sigma_1$.
Take $P_{\Sigma_2}:= I - P_{\Sigma_1}$, then
\begin{equation}
  V = \mathrm{im}_V (P_{\Sigma_1}) \oplus \mathrm{ker}_V (P_{\Sigma_1}) = P_{\Sigma_1} V \oplus P_{\Sigma_2} V,
  \label{eq:Vdecomposition}
\end{equation}
and the unbounded operator $\mathcal{A}_U P_{\Sigma_2}: P_{\Sigma_2} \mathrm{dom}_V ( \mathcal{A}_U) \to  P_{\Sigma_2}  V$ has spectrum $\Sigma_2$ \cite{buhler2018functional}.
The subspaces $P_{\Sigma_1} V$ and $ P_{\Sigma_2} V$ are usually termed spectral subspaces of $\mathcal{A}_U$.

It is convenient to introduce a norm of box type, with respect to the splitting $P_{\Sigma_1} V \oplus P_{\Sigma_2} V$, by setting
\begin{displaymath}
   \Vert v \Vert_b : = \sup \left\{ \Vert P_{\Sigma_1} v \Vert_V , \Vert P_{\Sigma_2} v \Vert_V \right\}.
\end{displaymath}
This norm is equivalent to $\Vert \cdot \Vert_V$, as
\begin{displaymath}
   \Vert v \Vert_b \leq \Vert v \Vert_V \leq 2\Vert v \Vert_b.
\end{displaymath}
In what follows, we shall distinguish open balls with respect to the new norm $\Vert \cdot \Vert_b$ via the hat symbol, e.g.\ $\widehat{B}_r(u) = \{ v \in V \, | \, \Vert v-u \Vert_b < r \}$.
Moreover, we shall denote open balls in $P_{\Sigma_1}V$ (resp.\ $P_{\Sigma_2}V$) by $B_r^1(u_1)$ for $u_1 \in P_{\Sigma_1}V$ (resp.\ $B_r^2(u_2)$ for $u_2 \in P_{\Sigma_2}V$), where $B_r^1(u) =  \{ v_1 \in P_{\Sigma_1}V \, | \, \Vert v_1-u_1 \Vert_V < r \}$ ($B_r^2(u) =  \{ v_2 \in P_{\Sigma_2}V \, | \, \Vert v_2-u_2 \Vert_V < r \}$).

The class of spectral submanifolds we plan to extract in this work are all according to spectral decompositions of the form \eqref{eq:sugma}.
The resulting manifolds will hence be denoted 
by either
$W^\beta(U)$ or $W^{\Sigma_1}(U)$ interchangeably, depending on whichever is more convenient.
Here, $U$ stands for the base point of the manifold.
In most of what follows, we work in the setting of the perturbation equation around $U$, \eqref{eq:NSE_local}, in which case, the appropriate notation would be $W^\beta (0)$ or $W^{\Sigma_1}(0)$ for manifolds about $U$ -- we shall omit the $(0)$ to avoid confusion.
We now define their local version, $W^{\Sigma_1}_{\mathrm{loc},r} \subset \widehat{B}_r(0)$.

\begin{definition}[Spectral submanifolds]
\label{def:wloc}
Let $\varphi_t$ denote the semiflow generated by the Navier-Stokes system on $V$ (to be shown in Section~\ref{sect:semiflow}). 
We say that $W^{\Sigma_1}_{\mathrm{loc},r} \subset \widehat{B}_r(0) \subset V$ is a local $C^k$ spectral submanifold corresponding to the spectral subset $\Sigma_1$ if it satisfies the following:
\begin{enumerate}[label=\upshape{(\roman*)}]
    \item $W^{\Sigma_1}_{\mathrm{loc},r}$ is locally invariant under $\varphi_t$, i.e., if $u \in W^{\Sigma_1}_{\mathrm{loc},r}$, then $\varphi_t (u ) \in W^{\Sigma_1}_{\mathrm{loc},r}$ for those $t \geq 0$ with
    $
    \varphi_\tau (u) \in \widehat{B}_r(0)
    $
    for all $\tau \in [0,t]$;
    \item \label{def:wloc_2} $W^{\Sigma_1}_{\mathrm{loc},r}$ is tangent to the spectral subspace $P_{\Sigma_1}V$ at the origin;
    \item $W^{\Sigma_1}_{\mathrm{loc},r}$ is a $C^k$ submanifold of $\widehat{B}_r(0)$.
\end{enumerate}
\end{definition}

The corresponding local (spectral) foliation, a family of submanifolds denoted by\footnote{We generally dispose of the explicit reference to the fixed point about which the foliation was computed ($0$ in the upper-right index).}
$$
\{ M_{\mathrm{loc},r}^{\Sigma_2,0} (u) \, | \, u \in \widehat{B}_r(0) \} = \{ M_{\mathrm{loc},r}^{\beta,0} (u) \, | \, u \in \widehat{B}_r(0) \},
$$
is defined as follows.

\begin{definition}
\label{def:foliation}
Let $\varphi_t$ denote the semiflow generated by the Navier-Stokes system on $V$ (to be shown in Section~\ref{sect:semiflow}).
A family of submanifolds $
\{ M_{\mathrm{loc},r}^{\Sigma_2}(u) \subset \widehat{B}_r(0) \, | \, u \in \widehat{B}_r(0) \},
$ parameterized by $u \in \widehat{B}_r(0) $ is said to be a $C^k \times C^\ell$ foliation for $\widehat{B}_r(0)$ corresponding to the spectral subset $\Sigma_2$ if the following conditions are satisfied:
\begin{enumerate}[label=\upshape{(\roman*)}]
    \item $\{ M_{\mathrm{loc},r}^{\Sigma_2}(u) \, | \, u \in \widehat{B}_r(0) \}$ form a locally positively invariant foliation of $\widehat{B}_r(0)$.
    That is,
    $
    \varphi_t (M_{\mathrm{loc},r}^{\Sigma_2}(u)) \cap \widehat{B}_r(0) \subset   M_{\mathrm{loc},r}^{\Sigma_2}(\varphi_t(u))  
    $
    for those $t \geq 0$ with
    $
    \varphi_\tau (u) \in \widehat{B}_r(0)
    $
    for all $\tau \in [0,t]$;
    \item $u \in M_{\mathrm{loc},r}^{\Sigma_2}(u)$ for each $u \in \widehat{B}_r(0)$;
    \item \label{def_fol:prop3} $M_{\mathrm{loc},r}^{\Sigma_2}(u)$ and $M_{\mathrm{loc},r}^{\Sigma_2}(v)$ are either identical or disjoint for each $u,v \in \widehat{B}_r(0)$;
    \item $M_{\mathrm{loc},r}^{\Sigma_2}(0)$ is tangent to the spectral subspace $P_{\Sigma_2}V$; 
    \item The set $\{ (u,v) \, | \, u \in \widehat{B}_r(0), \; v \in  M_{\mathrm{loc},r}^{\Sigma_2}(u)  \}$ is a $C^k \times C^\ell$ submanifold of $\widehat{B}_r(0) \times \widehat{B}_r(0) $.
\end{enumerate}
\end{definition}

The role of the foliation $M_{\mathrm{loc},r}^{\Sigma_2}(u)$
is to synchronize trajectories in their approach to the manifold $W^{\Sigma_1}_{\mathrm{loc},r}$, and determine the rate of attraction. 
In particular, we will show that $W^{\beta}_{\mathrm{loc},r}$ attracts all nearby trajectories remaining within $\widehat{B}_r(0)$ at a rate of $\mathcal{O}(e^{\beta t})$ (see Corollary~\ref{cor:Wloc}; also Theorem~\ref{thm:summary} for a summary of all results).

The objective throughout 
the following few sections
is to show that spectral submanifolds and their respective foliations exist in the phase space $V$ of the Navier-Stokes system.
The first step is to establish the existence and smoothness of the (local) semiflow $\varphi_t$, content of Section~\ref{sect:semiflow}, after a brief recollection of results regarding fractional powers of $\mathcal{A}_U$.

\subsection{Fractional powers of sectorial operators}

There are two ingredients to establishing that \eqref{eq:main} generates a semiflow on $V$.
One is the fact that $B$ is a bounded operator from $V \times V$ into $L^p_\sigma$, for some appropriately chosen $p$, as we shall see later in Lemma~\ref{lemma:F}.
The other is the smoothing property of $e^{\mathcal{A}_U t}$, in particular, that it acts boundedly from $L^p_\sigma$ into $V$, for the same $p$ as above.
We will address these questions in full generality (in terms of $p$), then conclude that there exists $p \in (6/5,3/2]$ for which both hold.

For the smoothing property, the notion of fractional powers is necessary.
Choose $\omega > \omega_0$ so that 
$$
\mathcal{A}_{U,\omega}:= \mathcal{A}_U - \omega I
$$
has spectrum contained entirely in the negative half plane, $\mathrm{Re} \, \sigma \left( \mathcal{A}_{U,\omega} \right) < 0 $.
Then for $\alpha > 0$ we may write
\begin{equation}
    \left( -\mathcal{A}_{U,\omega} \right)^{-\alpha} = \frac{1}{\Gamma (\alpha)} \int_0^\infty t^{\alpha-1} e^{t  \mathcal{A}_{U,\omega} } dt,
    \label{eq:-alpha}
\end{equation}
where $\Gamma$ denotes the Euler $\Gamma$ function.
Expression \eqref{eq:-alpha} defines a bounded linear operator on $L^p_\sigma$ that is injective \cite{lunardi1995}. 
For positive powers, set $\mathrm{dom}_p \left( \left( -\mathcal{A}_{U,\omega} \right)^{\alpha}\right) := \mathrm{im}_p \left( \left( -\mathcal{A}_{U,\omega} \right)^{-\alpha} \right)$ and
\begin{equation}
    \left( -\mathcal{A}_{U,\omega} \right)^{\alpha} : = \left(\left( -\mathcal{A}_{U,\omega} \right)^{-\alpha} \right)^{-1}. \label{eq:alpha}
\end{equation}
The operator $\left( -\mathcal{A}_{U,\omega} \right)^{\alpha}$ is closed, thus the space $\mathrm{dom}_p \left( \left( -\mathcal{A}_{U,\omega} \right)^{\alpha}\right)$ endowed with the norm $\Vert  v \Vert_{\left( -\mathcal{A}_{U,\omega} \right)^{\alpha}} = \Vert \left( -\mathcal{A}_{U,\omega} \right)^{\alpha} v \Vert_{L^{p} }$ (equivalent to the graph norm) defines a Banach space, independent of the choice of $\omega$ (\cite{henry1981}, Theorem 1.4.6).

The following embedding property of 
domains of fractional powers
will be of use.

\begin{theorem}[Theorem 1.6.1, \cite{henry1981}] \label{thm:fractdom}
    Suppose $\Omega \subset \mathbb{R}^d$ is as above, $1 \leq p < \infty$, and $A$ is a sectorial operator in $L^p(\Omega)$ such that $\mathrm{dom}_p(A)$ is continuously embedded in $W^{m,p}(\Omega)$ for some $m \geq 1$. 
    Then for $0 \leq\alpha \leq 1$
    \begin{displaymath}
        \mathrm{dom}_p((-A)^\alpha) \subset W^{k,q}(\Omega) \quad \text{when} \quad k - \frac{d}{q} < m \alpha - \frac{d}{p}, \quad q \geq p,
    \end{displaymath}
    with continuous inclusion.
\end{theorem}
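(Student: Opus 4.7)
The plan is to combine the smoothing properties of the semigroup $e^{At}$ with the Gagliardo-Nirenberg interpolation inequality, then exploit the integral representation of negative fractional powers analogous to \eqref{eq:-alpha}. First, one may assume without loss of generality that $A$ has been shifted so that $\mathrm{Re}\, \sigma(A) < -\delta$ for some $\delta > 0$; the domains of fractional powers and their topologies are unaffected by such a shift (cf.\ \eqref{eq:-alpha}–\eqref{eq:alpha}). Setting $T(t) := e^{At}$, the standard sectorial estimates $\|T(t)\|_{L^p \to L^p} \leq C e^{-\delta t}$ and $\|AT(t)\|_{L^p \to L^p} \leq C t^{-1} e^{-\delta t}$, combined with the hypothesis $\mathrm{dom}_p(A) \hookrightarrow W^{m,p}(\Omega)$, yield
$$
\|T(t) v\|_{L^p} \leq C e^{-\delta t} \|v\|_{L^p}, \qquad \|T(t) v\|_{W^{m,p}} \leq C(1 + t^{-1}) e^{-\delta t} \|v\|_{L^p}, \qquad t > 0.
$$

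Next, I set $\theta := (k + d/p - d/q)/m$; a scaling computation shows this is the Gagliardo-Nirenberg exponent for interpolating $W^{k,q}$ between $W^{m,p}$ and $L^p$. The hypotheses $q \geq p$ and $k \geq 0$, together with the strict inequality $k - d/q < m \alpha - d/p$, force $0 \leq \theta < \alpha \leq 1$. The Gagliardo-Nirenberg inequality on $\Omega$ then supplies
$$
\|w\|_{W^{k,q}} \leq C\, \|w\|_{W^{m,p}}^{\theta}\, \|w\|_{L^p}^{1-\theta}, \qquad w \in W^{m,p}(\Omega),
$$
which I apply with $w = T(t) v$. Using concavity to bound $(1 + t^{-1})^\theta \leq 1 + t^{-\theta}$, this produces the key pointwise estimate
$$
\|T(t) v\|_{W^{k,q}} \leq C(1 + t^{-\theta})\, e^{-\delta t}\, \|v\|_{L^p}, \qquad t > 0.
$$

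The final step is the fractional-power representation. For $u \in \mathrm{dom}_p((-A)^\alpha)$ with $0 < \alpha \leq 1$, set $v := (-A)^\alpha u \in L^p(\Omega)$ and write
$$
u = (-A)^{-\alpha} v = \frac{1}{\Gamma(\alpha)} \int_0^\infty t^{\alpha - 1}\, T(t) v \, dt.
$$
Passing the $W^{k,q}$ norm inside the Bochner integral (justified by the pointwise bound above) gives
$$
\|u\|_{W^{k,q}} \leq \frac{C}{\Gamma(\alpha)} \int_0^\infty t^{\alpha - 1}(1 + t^{-\theta})\, e^{-\delta t}\, dt \cdot \|(-A)^\alpha u\|_{L^p}.
$$
The integral is finite because $\alpha > 0$ controls the first term and $\alpha > \theta$ — precisely the hypothesis — tames the singularity at $t = 0$ in the second; exponential decay handles the tail. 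The case $\alpha = 0$ is vacuous, since the strict inequality $k - d/q < -d/p$ cannot hold under $q \geq p$ and $k \geq 0$.

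The main obstacle I anticipate is establishing the Gagliardo-Nirenberg inequality in the required form on the bounded domain $\Omega$ with only piecewise smooth boundary: the inequality is classical on $\mathbb{R}^d$, but transferring it to $\Omega$ requires a total extension operator $W^{m,p}(\Omega) \to W^{m,p}(\mathbb{R}^d)$ and a subsequent restriction, which is available for Lipschitz domains and can be arranged for the class of $\Omega$ considered here. A secondary technical point is the justification of the Bochner-integral manipulation, which follows from strong measurability of $t \mapsto T(t) v$ valued in $W^{k,q}$ together with integrability of the norm bound.
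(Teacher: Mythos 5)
Your proposal is correct and is essentially the classical argument of Henry (Theorem 1.6.1 of \cite{henry1981}), which is precisely the source the paper cites without reproducing the proof: combine the analytic-semigroup estimates $\Vert T(t)v\Vert_{L^p} \lesssim e^{-\delta t}\Vert v\Vert_{L^p}$, $\Vert AT(t)v\Vert_{L^p} \lesssim t^{-1}e^{-\delta t}\Vert v\Vert_{L^p}$ and the embedding $\mathrm{dom}_p(A) \hookrightarrow W^{m,p}$ with the Gagliardo--Nirenberg interpolation exponent $\theta = (k - d/q + d/p)/m$, then feed the resulting $\mathcal{O}(t^{-\theta}e^{-\delta t})$ smoothing estimate into the Balakrishnan integral $(-A)^{-\alpha} = \Gamma(\alpha)^{-1}\int_0^\infty t^{\alpha-1}T(t)\,dt$, with $\alpha > \theta$ ensuring integrability near $t = 0$. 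The side observations you make (subadditivity of $t\mapsto t^\theta$ to split $(1+t^{-1})^\theta$, vacuity of $\alpha = 0$, validity of Gagliardo--Nirenberg on $\Omega$ via an extension operator, and the constraint $\theta \geq k/m$ following from $q \geq p$) are all the correct technical checks.
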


By \eqref{eq:ellipticregularity}, $\mathrm{dom}_p \left( \mathcal{A}_{U,\omega} \right)$ is continuously embedded in $W^{2,p}_\sigma$ for all $p \in (1,\infty)$. 
  Taking 
  \begin{equation}
  \frac{2d}{d+2} < p < \infty, \qquad  \text{and} \qquad \frac12 + \frac{d}{2} \left( \frac1p - \frac12 \right) < \alpha < 1, \label{eq:alpharange}
  \end{equation} 
  Theorem~\ref{thm:fractdom} guarantees that the inclusion $ \mathrm{dom}_p((-\mathcal{A}_{U,\omega} )^\alpha) \xhookrightarrow{} V$ is bounded, which we shall repeatedly use in the following sections.

Through sectoriality, $\mathcal{A}_{U}$ generates an analytic semigroup $e^{\mathcal{A}_{U} t}$ on $L^p_\sigma$ for all $p \in (1,\infty)$, which satisfies 
\begin{gather}
   \Vert e^{\mathcal{A}_{U} t} \Vert_{\mathcal{L}(L^p_\sigma)} \leq C_0 e^{\omega t}; \label{eq:eAU0}\\
   \Vert \left( -\mathcal{A}_{U,\omega} \right)^{\alpha} e^{\mathcal{A}_{U,\omega} t} \Vert_{\mathcal{L}(L^p_\sigma)} \leq C_\alpha t^{-\alpha}  \label{eq:alphabound}
\end{gather}
for any $\omega > \omega_0$ and $t>0$. 
Combining \eqref{eq:alphabound} with the above observation, we obtain
\begin{equation}
    \Vert e^{t \mathcal{A}_U} \Vert_{\mathcal{L}(L^p_\sigma,V)} \leq C \Vert  \left( -\mathcal{A}_{U,\omega} \right)^{\alpha} e^{t \mathcal{A}_{U,\omega} + t \omega } \Vert_{\mathcal{L}(L^p_\sigma)} \leq C_\alpha t^{-\alpha} e^{\omega t}
    \label{eq:commonplace_bound}
\end{equation}
for appropriate $p$ and $\alpha$ chosen according to \eqref{eq:alpharange}.

\section{Existence and smoothness of local semiflow} 
\label{sect:semiflow}

A family of maps $\{\varphi_t\}_{t \geq 0}$, $\varphi_t:V \to V$, is called an (autonomous) semiflow on $V$ if
\begin{subequations}
    \begin{align}
    \varphi_0 &= I, \label{eq:semiflow1}\\ 
    \varphi_t \circ \varphi_s &= \varphi_{t+s}, \qquad \forall t,s \geq 0, \label{eq:semiflow2}
\end{align}\label{eq:semiflow}\end{subequations}
with $t \mapsto \varphi_t(u_0)$ continuous for each initial condition $u_0 \in V$.
A local semiflow is a semiflow $\varphi:\mathcal{D} \to V$ defined on an open set $\mathcal{D} \subset \mathbb{R}^{\geq 0} \times V$ such that $(0,v) \in \mathcal{D}$ for all $v \in V$.

The majority of this section is concerned with establishing that equation \eqref{eq:main} generates a smooth (local) semiflow.
In doing so, we also consider a slightly altered system given by
\begin{equation}
    \frac{d u}{dt}  =  \mathcal{A}_{U} u - \chi_\rho (u) B(u,u) 
    \label{eq:main_cutoff}
\end{equation}
constructed to conform to requirements 
normally posed by invariant manifold theories 
on the Lipschitz constant of the nonlinearity (see \eqref{eq:lipschitz_requirement} and \eqref{eq:lipN}).
Here, $\chi : V \to [0,1]$ is a $C^\infty$ smooth cutoff function satisfying
\begin{equation}
\chi (u) = 
\begin{cases}
1, & \Vert u \Vert_V \leq \frac12, \\
0, & \Vert u \Vert_V \geq 1,
\end{cases}
\qquad
\text{with }
\Vert D \chi (u) \Vert_{\mathcal{L}(V)} \leq 3 
\text{ for all }
u \in V,
\label{eq:chi}
\end{equation}
and $ \chi_\rho (u): =  \chi(u/\rho)$, with $\rho > 0$ to be chosen in accordance with  the Lipschitz condition on the nonlinearity (see \eqref{eq:lipschitz_requirement}).
Whether such a smooth cutoff function exists is a nontrivial issue in general Banach spaces (see \cite{FRY2002}, for instance), but it certainly is the case for Hilbert spaces. 
This serves as the primary reason to exclusively treat the problem on $V$ -- 
the analysis to follow could otherwise be extended to a larger class of $W^{k,p}$ spaces.

For a map $f$ between two Banach spaces $X,Y$, and a subset $O \subset X$ contained in its domain, denote by
\begin{displaymath}
    \mathrm{Lip}_{X \to Y} ( f \, | \,  O )
    = \sup_{\substack{x,y \in O \\ x \neq y}} \frac{\Vert f(x) - f(y) \Vert_Y}{\Vert x-y \Vert_X}
\end{displaymath}
the Lipschitz constant of $f$ on $O$. 
$f$ is called globally Lipschitz from $X$ to $Y$ if $O$ can be chosen to be the entire space $X$ and $\mathrm{Lip}_{X \to Y} ( f) := \mathrm{Lip}_{X \to Y} ( f \, | \,  X)< \infty$;
and locally Lipschitz if $\mathrm{Lip}_{X \to Y} ( f \, | \, \overline{B_r (0)} ) < \infty$ for all $r > 0$,
where $B_r (0)$ denotes the open ball of radius $r$ centered at the origin.
The Gateaux derivative of $f$ is denoted by $D^G f$; its Fréchet derivative is denoted by $Df$.

Let us also introduce shorthand notation for the nonlinear term in the equation:
\begin{equation}
    F (u) : = - B(u,u) , \qquad \text{and} \qquad F_\rho (u) : = - \chi_\rho (u) B(u,u) .
    \label{eq:F_Frho}
\end{equation}

The following lemma characterizes $F$ and $F_\rho$.

\begin{lemma} \label{lemma:F}
Let $F$ and $F_\rho$ be as in \eqref{eq:F_Frho} and let  $ p \in [1,3/2]$ if $d = 3$, and $p \in [1,2)$ if $d = 2$.
Then $F, F_\rho \in C^{\infty}(V,L^p_\sigma)$. 
Moreover, $F_\rho: V \to L^p_\sigma$ is globally Lipschitz with 
\begin{displaymath}
  \Vert F_\rho (u) -   F_\rho (v) \Vert_{L^p}   \leq \rho C_{p,d} \Vert u - v \Vert_V, \qquad u,v \in V.
\end{displaymath}
\end{lemma}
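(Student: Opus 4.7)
The plan has three stages: (i) establish a core bilinear bound $\|B(u,v)\|_{L^p_\sigma} \leq C_{p,d} \|u\|_V \|v\|_V$; (ii) deduce $C^\infty$-smoothness of $F$ and $F_\rho$ from it; (iii) compute $DF_\rho$ and bound it uniformly, so the Lipschitz estimate drops out of the mean value theorem.

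For (i), since the Helmholtz projector $P_L$ is bounded on $L^p(\Omega,\mathbb{R}^d)$ for $1 < p < \infty$, it suffices to estimate $\|(u \cdot \nabla) v\|_{L^p}$. Hölder's inequality with exponents $(r,2)$ satisfying $1/r + 1/2 = 1/p$, i.e.\ $r = 2p/(2-p)$, gives $\|(u \cdot \nabla) v\|_{L^p} \leq \|u\|_{L^r}\, \|\nabla v\|_{L^2}$. The admissible $p$-ranges in the statement are exactly those for which $r$ remains within reach of the Sobolev embedding of $V$: $r \in (2,6]$ covered by $V \hookrightarrow L^6$ in $d=3$, and $r \in (2,\infty)$ covered by $V \hookrightarrow L^r$ for every finite $r$ in $d=2$. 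Combining with $\|\nabla v\|_{L^2} = \|v\|_V$ yields the bilinear estimate. For (ii), continuity and bilinearity of $B$ make $F = -B(\cdot,\cdot)$ a continuous quadratic polynomial from $V$ to $L^p_\sigma$, so $DF(u)h = -B(h,u) - B(u,h)$, $D^2 F(u)(h,k) = -B(h,k) - B(k,h)$ is independent of $u$, and all higher derivatives vanish, giving $F \in C^\infty(V,L^p_\sigma)$. Since the cutoff $\chi_\rho = \chi(\cdot/\rho)$ is $C^\infty$ on $V$ by the standing hypothesis on $\chi$, the product $F_\rho = \chi_\rho F$ is in $C^\infty(V, L^p_\sigma)$ by the Leibniz rule.

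For (iii), the product rule gives
\[
DF_\rho(u) h = -\bigl(D\chi_\rho(u)h\bigr)\, B(u,u) - \chi_\rho(u)\bigl[B(h,u) + B(u,h)\bigr],
\]
and the crucial observation is that both $\chi_\rho$ and $D\chi_\rho = (1/\rho)\, D\chi(\cdot/\rho)$ vanish identically on $\{\|u\|_V \geq \rho\}$, so only $\|u\|_V < \rho$ needs attention. On this set, $\|D\chi_\rho(u)\|_{\mathcal{L}(V,\mathbb{R})} \leq 3/\rho$, $|\chi_\rho(u)| \leq 1$, and the bilinear bound combine to yield
\[
\|DF_\rho(u)\|_{\mathcal{L}(V, L^p_\sigma)} \leq (3/\rho)\, C_{p,d}\, \rho^2 + 2 C_{p,d}\, \rho \; \leq \; C'_{p,d}\, \rho
\]
uniformly in $u \in V$. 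Integrating along the straight segment from $v$ to $u$ via the mean value theorem then delivers the stated Lipschitz bound with constant proportional to $\rho$.

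The only step demanding genuine care is the bilinear bound in (i): matching Hölder and Sobolev exponents across both spatial dimensions, in particular at the $d=3$ endpoint $p = 3/2$ (the critical Sobolev case $r=6$), and noting that $P_L$-boundedness requires $p>1$ — consistent with the downstream restriction $p > 2d/(d+2) = 6/5$ coming from \eqref{eq:alpharange}. Once (i) is in hand, smoothness is a mechanical polynomial-times-cutoff observation, and the global Lipschitz property follows cleanly from the vanishing of $\chi_\rho$ and its derivative outside the ball of radius $\rho$, which is exactly what prevents the quadratic growth of $B(u,u)$ from spoiling a global bound on $DF_\rho$.
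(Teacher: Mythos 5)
Your proof is correct and follows essentially the same route as the paper: the Hölder/Sobolev bilinear bound $\Vert B(u,v) \Vert_{L^p} \leq C_{p,d} \Vert u \Vert_V \Vert v \Vert_V$, smoothness of $F$ as a bounded quadratic map composed with the smooth cutoff, and a uniform bound on $DF_\rho$ via the mean value theorem. Your explicit observation that $D\chi_\rho$ and $\chi_\rho$ vanish outside $\overline{B_\rho(0)}$ (so the supremum of $\Vert DF_\rho(u)\Vert$ over all of $V$ is finite) is a slightly cleaner way to phrase what the paper achieves by writing $\mathrm{Lip}(F_\rho) = \mathrm{Lip}(F_\rho \mid \overline{B_\rho(0)})$, and your remark that $P_L$-boundedness needs $p>1$ correctly flags that the endpoint $p=1$ in the lemma's stated range is vacuous in practice.
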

\begin{proof}
  We show that $B : V \times V \to L^p_\sigma$ is a bounded bilinear map. Take $u,v \in V$, then
  \begin{align}
      \Vert B(u,v) \Vert_{L^p} &\leq C_p \Vert u \Vert_{L^{2p/(2-p)}} \Vert \nabla v \Vert_{L^2} \nonumber \\
      &\leq C_{p,d} \Vert u \Vert_V \Vert v \Vert_V
      \label{eq:Bbound}
  \end{align}
  using Hölder's inequality and the Sobolev embedding theorem (which determines the range of admissible $p$). 
  In particular, the map $u \mapsto B(u,u)$ is in $C^{\infty}(V,L^p_\sigma)$ and thus $F \in  C^{\infty}(V,L^p_\sigma)$.
  The same holds for $F_\rho$, given that $\chi_\rho \in C^{\infty}(V,[0,1])$.
  The last assertion is now just an application of the 
  mean value theorem:
  \begin{align*} 
      \mathrm{Lip}_{V \to L^p_\sigma } (F_\rho) &= \mathrm{Lip}_{V \to L^p_\sigma } (F_\rho \, | \, \overline{B_\rho (0)}) \\
      &\leq \sup_{u \in \overline{B_\rho (0)}} \Vert D F_\rho (u) \Vert_{\mathcal{L}(V,L^p_\sigma)} \\
      &= \sup_{u \in \overline{B_\rho (0)}} \sup_{v \in V \setminus \{0 \}}  \frac{\Vert D \chi_\rho (u) [v] B(u,u) +\chi_\rho (u) B(u,v) + \chi_\rho (u) B(v,u) \Vert_{L^p_\sigma}}{\Vert v \Vert_V} \\
      & \leq \frac{3}{\rho} C_{p,d} \rho^2 + 2 C_{p,d} \rho,
  \end{align*}
  where the last line used \eqref{eq:chi} and \eqref{eq:Bbound}.
\end{proof}

We now recall an existence and smoothness result, which in essence is
 obtained upon combining results of Weissler~\cite{WEISSLER1979,Weissler1980TheNI} and Henry~\cite{henry1981}.
Its proof is included in Appendix~\ref{appendix} for the sake of completeness.

\begin{proposition}
\label{prop:semiflow_summary}
    The perturbation equation for the Navier-Stokes system \eqref{eq:main}
    generates a local semiflow $\varphi: \mathcal{D} \to V$  that is jointly $C^\infty$ on $\mathcal{D} \cap (\mathbb{R}^{>0} \times V)$.
    In two spatial dimensions ($d = 2$), the domain can be extended to define a global semiflow on $\mathcal{D} = \mathbb{R}^{\geq 0} \times V$.
    For the case of the cut-off system \eqref{eq:main_cutoff}, this property continues to hold for $d = 3$.
\end{proposition}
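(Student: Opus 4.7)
The plan is to reformulate \eqref{eq:main} (resp. \eqref{eq:main_cutoff}) as a mild-solution problem via Duhamel's formula, build local solutions by a contraction-mapping argument using the smoothing estimate \eqref{eq:commonplace_bound}, then upgrade to $C^\infty$ dependence through the formal variational equation, and finally extend globally either via a priori bounds (2D) or via the global Lipschitz property of $F_\rho$ (cut-off case).

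\textbf{Step 1 (Mild formulation and local existence).} Fix $p$ and $\alpha$ as in \eqref{eq:alpharange}; by Lemma~\ref{lemma:F} we may take $p \in (6/5,3/2]$ so that $F:V \to L^p_\sigma$ is locally Lipschitz and $\alpha \in (\tfrac12+\tfrac{d}{2}(\tfrac1p-\tfrac12),1)$ can be chosen. For $u_0 \in V$ and $T_0 > 0$ to be fixed, define the operator
\begin{displaymath}
  T(u)(t) = e^{\mathcal{A}_U t} u_0 + \int_0^t e^{\mathcal{A}_U (t-s)} F(u(s))\, ds
\end{displaymath}
on the closed ball of radius $R$ around the constant path $t \mapsto e^{\mathcal{A}_U t} u_0$ in $C([0,T_0],V)$. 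Using \eqref{eq:commonplace_bound} to bound $\|e^{\mathcal{A}_U(t-s)}\|_{\mathcal{L}(L^p_\sigma,V)} \leq C_\alpha (t-s)^{-\alpha} e^{\omega (t-s)}$, together with the local Lipschitz estimate from Lemma~\ref{lemma:F}, one checks that
\begin{displaymath}
  \|T(u) - T(v)\|_{C([0,T_0],V)} \leq C_\alpha\, \mathrm{Lip}\, F|_{B_R}\, \frac{T_0^{1-\alpha}}{1-\alpha} e^{\omega T_0} \|u-v\|_{C([0,T_0],V)},
\end{displaymath}
and similarly for self-maps of the ball, so that $T$ is a contraction for $T_0 = T_0(\|u_0\|_V)$ sufficiently small. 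The Banach fixed point theorem produces a unique mild solution $u(\cdot; u_0)$. Standard continuation combined with the uniqueness and continuous dependence estimates yields a maximal interval of existence $[0,t_{\max}(u_0))$ and an open domain $\mathcal{D} \subset \mathbb{R}^{\geq 0} \times V$ on which the semiflow $\varphi_t(u_0) := u(t;u_0)$ is defined, with $(0,v) \in \mathcal{D}$ for every $v$.

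\textbf{Step 2 (Joint smoothness).} To prove $\varphi \in C^\infty$ on $\mathcal{D} \cap (\mathbb{R}^{>0} \times V)$, I would first establish $C^\infty$ dependence on the initial datum $u_0$ on the whole of $\mathcal{D}$ by the standard variational/bootstrap argument: the formal derivative $w(t) = D_{u_0}\varphi_t(u_0)[v]$ should solve
\begin{displaymath}
  w(t) = e^{\mathcal{A}_U t} v + \int_0^t e^{\mathcal{A}_U (t-s)} DF(\varphi_s(u_0))\, w(s)\, ds,
\end{displaymath}
which is a linear, nonautonomous Volterra equation satisfied by a unique $w \in C([0,T_0],V)$ exactly as in Step 1 (the same singular-kernel contraction argument applies since $DF:V \to \mathcal{L}(V,L^p_\sigma)$ is locally bounded). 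One then identifies this $w$ with the actual Fréchet derivative of $\varphi$ by estimating the remainder $\varphi_t(u_0+hv) - \varphi_t(u_0) - h w(t)$ through Gronwall, exploiting $F \in C^\infty$ from Lemma~\ref{lemma:F}. Iterating this procedure gives $C^k$-regularity in $u_0$ for every $k$. For joint smoothness in $t$ on $\mathbb{R}^{>0} \times V$, I rely on the analytic-semigroup theory: for $t > 0$, $\varphi_t(u_0) \in \mathrm{dom}((-\mathcal{A}_{U,\omega})^\alpha)$ for every $\alpha < 1$ (and in fact classical solutions of \eqref{eq:main} for $t > 0$), so $\partial_t \varphi_t(u_0) = \mathcal{A}_U \varphi_t(u_0) + F(\varphi_t(u_0))$ holds in $V$; iterating this identity and using that $F \in C^\infty(V,L^p_\sigma) \hookrightarrow C^\infty(V, \mathrm{dom}((-\mathcal{A}_{U,\omega})^{\alpha-1}))$ on any positive-time slab yields $C^\infty$-regularity of $t \mapsto \varphi_t(u_0)$, and combining with the already-established spatial smoothness and continuity gives joint $C^\infty$.

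\textbf{Step 3 (Global extension).} In dimension $d = 2$, the Ladyzhenskaya inequality together with the energy/enstrophy identity obtained by testing \eqref{eq:main} against $-P_L\Delta u$ in $H$ provides an a priori bound on $\|u(t)\|_V$ that grows at most exponentially on any finite interval; since $t_{\max}(u_0) < \infty$ would force $\|u(t)\|_V \to \infty$, this rules out blow-up and gives $\mathcal{D} = \mathbb{R}^{\geq 0} \times V$. For the cut-off system \eqref{eq:main_cutoff} in either dimension, Lemma~\ref{lemma:F} asserts that $F_\rho$ is globally Lipschitz, so Step 1 produces contractions on arbitrarily large intervals (the Lipschitz constant no longer depends on the radius $R$), and the semiflow is automatically global.

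\textbf{Main obstacle.} The delicate step is Step 2: the integrand in the Duhamel formula is singular at $s = t$ through the factor $(t-s)^{-\alpha}$, so differentiating under the integral and identifying the Fréchet derivative with the solution of the linearized Volterra equation both require careful Gronwall estimates against the integrable singular kernel. Pushing the regularity from $C^k$ in $u_0$ to full joint $C^\infty$ in $(t,u_0)$ for $t > 0$, which uses the parabolic smoothing to absorb time derivatives, is the part I would treat most carefully.
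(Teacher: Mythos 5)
Your Steps 1 and 3 match the paper's appendix (Propositions~\ref{prop:exist} and Remark~\ref{remark:globalex}): the same Duhamel/contraction argument against the singular kernel $(t-s)^{-\alpha}$ furnished by \eqref{eq:commonplace_bound} and Lemma~\ref{lemma:F}, and the same dichotomy (2D a priori bounds vs. global Lipschitzness of $F_\rho$) for global extension. Your treatment of $u_0$-smoothness in Step~2 via the linearized Volterra equation and Gronwall against the integrable singular kernel is also the paper's route (Proposition~\ref{prop:smooth}), except the paper is careful to establish Gateaux differentiability first and then upgrade to Fréchet by proving norm continuity of $u \mapsto D^G\varphi^t_{t_0}(u)$, which is the cleanest way to cash in on Lemma~\ref{lemma:F}.

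Where you genuinely diverge is the joint $t$-smoothness for $t>0$, and this is where your plan is weaker. You propose to bootstrap directly from the identity $\partial_t\varphi_t(u_0)=\mathcal{A}_U\varphi_t(u_0)+F(\varphi_t(u_0))$ "in $V$" and iterate; but the paper's Kato--Fujita discussion only gives $\varphi_t(u_0)\in\mathrm{dom}_2(\mathcal{A}_U)$ and the equation in $H$, not in $V$, so the time derivative does not a priori live in $V$. Pushing this through requires a regularity ladder tracking $\mathrm{dom}((-\mathcal{A}_{U,\omega})^{k})$ at each order, and you have not specified how the nonlinear term is controlled along the ladder. The paper instead uses the time-rescaling trick of Henry (Cor.~3.4.6), restated here as Corollary~\ref{cor:jointsmoothness}: write $\varphi^t_{t_0}(\cdot,\mu)=\tilde\varphi^1_0(\cdot,(\mu,(t-t_0)^{-1},t_0))$ for a rescaled semiflow $\tilde\varphi$, thereby converting time dependence into parameter dependence, and then joint $C^\infty$ in $t$ for $t>t_0$ falls out of the (already established) smooth parameter dependence. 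This is both more elementary and more robust --- it sidesteps the domain bookkeeping entirely --- and is the ingredient you would need to add to make Step~2 airtight.
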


\begin{proof}
    The proof is deferred to Appendix~\ref{appendix}.
\end{proof}

\section{Construction of invariant manifolds and foliations}
\label{sect:manifolds}

In this section, we apply results of Chen et al.~\cite{chen97} to obtain  
spectral submanifolds and foliations of interest (according to Definitions \ref{def:wloc} and \ref{def:foliation})
about the base state $U$ in $V$.

As is standard in invariant manifold theory,
we shall initially work with the cut-off version of the Navier-Stokes semiflow determined by
\begin{equation}
    \varphi_t^{\rho} (u) = e^{t \mathcal{A}_{U} } u  + \int_{0}^t e^{(t-s)\mathcal{A}_{U} } F_\rho \circ \varphi_s^{\rho} (u) \, ds, \qquad u \in V,
    \label{eq:cutoff_semiflow}
\end{equation}
in order to apply the abstract results of Chen et al.~\cite{chen97}, 
then subsequently discuss how these pertain to the real system, 
using the fact that $\varphi$ and $\varphi^\rho$ agree locally.
As a preliminary step, we state an explicit bound for the Lipschitz constant associated to the
nonlinear part of $\varphi^\rho_t$, which we denote by
\begin{equation}
  N_\rho (t,u) := \int_0^t e^{(t-s)\mathcal{A}_{U} }  F_\rho \circ \varphi^\rho_s(u)   ds, \qquad t > 0, \quad u \in V.
  \label{eq:Nrho}
\end{equation}

\begin{lemma}
\label{lemma:lipsic2}
Fix any pair $(p,\alpha)$ satisfying \eqref{eq:alpharange} and the assumptions of Lemma~\ref{lemma:F}. Then, the
 operator $ N_\rho (t,\cdot):V \to V$ is globally Lipschitz for $\rho>0$ sufficiently small, with
\begin{equation}
 \Vert N_\rho (t,u) - N_\rho (t,v) \Vert_V  \leq  \rho  C_{\alpha,p,d} t^{1-\alpha} \max \left\{ 1, e^{2 t \omega } \right\} \Vert u -v \Vert_{V},
 \label{eq:lipN}
\end{equation}
for all $u,v \in V$ and $t >0$.
\end{lemma}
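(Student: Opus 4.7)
The plan is to work directly with the mild formulation of $\varphi_t^\rho$, derive a Gronwall-type integral inequality for the auxiliary quantity $\eta(t) := \Vert \varphi_t^\rho(u) - \varphi_t^\rho(v) \Vert_V$, and then substitute the pointwise bound on $\eta(s)$ back into the defining integral \eqref{eq:Nrho} to extract the claimed factor $t^{1-\alpha}\max\{1, e^{2\omega t}\}$. Note that by Proposition~\ref{prop:semiflow_summary}, $\varphi_t^\rho$ is a global semiflow on $V$, so $\eta(t)$ is well-defined for all $t \geq 0$.

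First I would subtract the mild equations for $\varphi_t^\rho(u)$ and $\varphi_t^\rho(v)$ to obtain
\[
\varphi_t^\rho(u) - \varphi_t^\rho(v) = e^{t\mathcal{A}_U}(u-v) + \int_0^t e^{(t-s)\mathcal{A}_U} \bigl[ F_\rho(\varphi_s^\rho(u)) - F_\rho(\varphi_s^\rho(v)) \bigr]\, ds.
\]
Measuring in $V$, the first term is controlled by the standard bound $\Vert e^{t\mathcal{A}_U}\Vert_{\mathcal{L}(V)} \leq C_0 e^{\omega t}$ for the analytic semigroup on the intermediate space $V$ (which follows from the boundedness of $P_{\Sigma_1}$, $P_{\Sigma_2}$ and the fractional-power characterization used around \eqref{eq:alpharange}). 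For the integrand, I combine \eqref{eq:commonplace_bound} with the global Lipschitz bound of Lemma~\ref{lemma:F}, yielding
\[
\eta(t) \leq C_0 e^{\omega t} \Vert u-v\Vert_V + \rho\, C_{\alpha,p,d} \int_0^t (t-s)^{-\alpha}\, e^{\omega(t-s)}\, \eta(s)\, ds.
\]

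Second, I would apply a singular Gronwall inequality (e.g.\ Lemma 7.1.1 of \cite{henry1981}) to the weighted function $\tilde\eta(t) := e^{-\omega t}\eta(t)$, which satisfies
\[
\tilde\eta(t) \leq C_0 \Vert u-v\Vert_V + \rho\, C_{\alpha,p,d} \int_0^t (t-s)^{-\alpha}\, \tilde\eta(s)\, ds.
\]
Henry's estimate produces a pointwise bound of Mittag-Leffler type, of the form $\tilde\eta(t) \leq \tilde C\, e^{\delta(\rho)\, t}\, \Vert u-v\Vert_V$ with $\delta(\rho)\to 0$ as $\rho\to 0$. Taking $\rho$ small enough so that $\delta(\rho) \leq |\omega|$ gives
\[
\eta(s) \leq \tilde C\, \max\{1, e^{2\omega s}\}\, \Vert u-v\Vert_V
\]
uniformly in $s$ (the $\max$ form handles both signs of $\omega$ in one stroke).

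Third, I would substitute this bound back into
\[
\Vert N_\rho(t,u) - N_\rho(t,v) \Vert_V \leq \rho\, C_{\alpha,p,d} \int_0^t (t-s)^{-\alpha}\, e^{\omega(t-s)}\, \eta(s)\, ds
\]
and evaluate the resulting integral. When $\omega > 0$ the integrand is bounded by $e^{\omega(t-s)}(t-s)^{-\alpha}\, e^{2\omega s}$; pulling out $e^{2\omega t}$ and bounding the remaining $e^{-\omega s}$ factor by $1$, the integral reduces to $\int_0^t (t-s)^{-\alpha} ds = t^{1-\alpha}/(1-\alpha)$. When $\omega \leq 0$ the exponentials are uniformly bounded by $1$ and the same direct evaluation applies. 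Combining both cases into the unified envelope $\max\{1, e^{2\omega t}\}$ and absorbing $(1-\alpha)^{-1}$ into the constant yields the stated inequality.

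The main obstacle is the Gronwall step: the kernel $(t-s)^{-\alpha}$ is singular at $s=t$, precluding use of the classical lemma, and the iteration underlying Henry's singular version produces nested convolutions whose growth rate depends delicately on $\rho$. The content of the \emph{sufficiently small} hypothesis on $\rho$ is exactly that this Mittag-Leffler growth rate $\delta(\rho)$ is subsumed into $\max\{1, e^{2\omega t}\}$; everything else is bookkeeping around \eqref{eq:commonplace_bound} and Lemma~\ref{lemma:F}.
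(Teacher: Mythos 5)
Your proposal is correct, and it takes a genuinely different route from the paper. Where you invoke Henry's singular Gronwall lemma (and the Mittag--Leffler growth rate $\delta(\rho) = (\rho C_{\alpha,p,d}\Gamma(1-\alpha))^{1/(1-\alpha)}$), the paper instead proceeds by a more elementary sup-absorption argument: it bounds $\Vert F_\rho\circ\varphi_s^\rho(u)-F_\rho\circ\varphi_s^\rho(v)\Vert_{L^p}$ by $\rho C_{p,d}\,\sup_{s\in(0,t)}\Vert\varphi_s^\rho(u)-\varphi_s^\rho(v)\Vert_V$ directly inside the integral (so the kernel factor integrates to $t^{1-\alpha}/(1-\alpha)$ immediately), then takes $\sup_{t\in(0,T)}$ of the whole inequality and absorbs the resulting small multiple of the supremum into the left-hand side, requiring $\rho < \tfrac{(1-\alpha)T^{\alpha-1}}{2C_{\alpha,p,d}\max\{e^{T\omega},1\}}$. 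The trade-offs are worth noticing: the paper's argument is self-contained but its smallness condition on $\rho$ is explicitly $T$-dependent, so the estimate is genuinely uniform only on $(0,T]$ for fixed $\rho$ (this is harmless since the application downstream only needs $t=\tau=1$). Your approach costs you the singular Gronwall lemma but buys a pointwise-in-time bound $\tilde\eta(t)\le\tilde C e^{\delta(\rho)t}$ with $\delta(\rho)$ independent of any finite horizon, so once $\rho$ is chosen to satisfy $\delta(\rho)\le|\omega|$ (which is possible for any choice $\omega\neq 0$, and $\omega$ may always be taken nonzero since it is a free parameter exceeding $\omega_0$) the claimed estimate holds for all $t>0$ with a single $\rho$. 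Your substitution back into \eqref{eq:Nrho} and the bookkeeping with $e^{\omega(t-s)}e^{2\omega s}=e^{2\omega t}e^{-\omega(t-s)}$ are correct.
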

\begin{proof}
With $(p,\alpha)$ as in the assumption, we have, using \eqref{eq:commonplace_bound}
\begin{align*}  
\Vert \varphi_t^\rho (u) - &\varphi_t^\rho (v) \Vert_{V}  =\left\Vert e^{t \mathcal{A}_{U} } (u -v) + \int_0^t e^{(t-s)\mathcal{A}_{U} } \left[  F_\rho \circ \varphi_s^\rho (u) -   F_\rho \circ \varphi_s^\rho (v) \right] ds \right\Vert_{V}  \\
&\leq C_0 e^{t \omega } \Vert u -v \Vert_{V} +  \int_0^t  (t-s)^{-\alpha} C_\alpha e^{(t-s) \omega } \left\Vert F_\rho \circ \varphi_s^\rho (u) -   F_\rho \circ \varphi_s^\rho (v) \right\Vert_{L^{p}_\sigma}  ds \\
& \leq C_0 e^{t \omega } \Vert u -v \Vert_{V} + t^{1-\alpha} \frac{C_{\alpha,p,d}  \max \left\{ e^{ t \omega },1 \right\}}{1- \alpha} \rho  \sup_{s \in (0,t)} \Vert \varphi_s^\rho (u) - \varphi_s^\rho (v) \Vert_V.
\end{align*}  
Taking the supremum over $t \in (0,T)$ and choosing $\rho$ small enough such that
\begin{displaymath}
  \rho < \frac{1-\alpha}{2 C_{\alpha,p,d} \max \left\{ e^{T \omega },1 \right\}} T^{\alpha-1},
\end{displaymath} 
we obtain
\begin{displaymath}
  \sup_{t \in (0,T)} \Vert \varphi_t^\rho (u) - \varphi_t^\rho (v) \Vert_V \leq 2 C_0 \max \left\{ e^{T \omega },1 \right\} \Vert u -v \Vert_{V},
\end{displaymath}
and thus
\begin{displaymath}
 \Vert N_\rho (t,u) - N_\rho (t,v) \Vert_V \leq \frac{2 C_0 C_{\alpha,p,d} \max \left\{ 1, e^{2 t \omega } \right\}}{1- \alpha } t^{1-\alpha}  \rho \Vert u -v \Vert_{V} \text{ for } t >0. 
\end{displaymath} 
\end{proof}

The result to follow is a straightforward application of Theorem~1.1 of \cite{chen97}, which
establishes the existence (and uniqueness) of a globally-defined invariant manifold and an associated invariant foliation for the cut-off semiflow $\varphi^\rho$.
The constructed invariant manifold corresponds to the rightmost eigenvalues of $\mathcal{A}_U$ (those contained in $\Sigma_1$) and is characterized by Lyapunov exponents of the backwards orbits.

Let us first introduce shorthand notation for forward/backward orbits.
For the Navier-Stokes semiflow given by $\varphi_t$, we denote the forward orbit of a point $u \in V$ as
\begin{displaymath}
   \mathcal{O}^+(u) : = \left\{ \, \varphi_t (u) \; \vert \; t \geq 0 \, \right\}.
\end{displaymath}
For the full 3D Navier-Stokes system, whenever writing the orbit notation, we are implicitly assuming that the solution starting from $u$
exists for all $t \geq 0$.
Backward orbits can be defined analogously -- their existence, however, is not guaranteed for any semiflow (not even for $\varphi_t^\rho$, where global forward existence is known).
If, for $u \in V$, $\varphi_t (u)$ can be defined for all $t \leq 0$ such that  $\varphi_t \circ \varphi_s (u) = \varphi_{t+s} (u)$ continues to hold over all of $\mathbb{R}$,\footnote{$\mathcal{O}^-(u)$ is uniquely defined for the case considered here: Indeed, for a time-independent forcing, solutions to the Navier-Stokes system are analytic in time on the domain of their existence (see Corollary~3.4.6 and the discussion on page~208 of \cite{henry1981}).}
\begin{displaymath}
   \mathcal{O}^-(u) : = \left\{ \, \varphi_t (u) \; \vert \; t \leq 0 \, \right\}
\end{displaymath}
is called a negative semiorbit. 
The full orbit is defined as $\mathcal{O}(u) : = \mathcal{O}^+(u) \cup \mathcal{O}^-(u)$.

Similarly, we denote by 
\begin{displaymath}
   \mathcal{O}^+_\rho(u) : = \left\{ \, \varphi_t^\rho (u) \; \vert \; t \geq 0 \, \right\}
\end{displaymath}
and 
\begin{displaymath}
   \mathcal{O}^-_\rho(u) : = \left\{ \, \varphi_t^\rho (u) \; \vert \; t \leq 0 \, \right\}
\end{displaymath}
the corresponding sets for the cut-off system.

\begin{figure}
\includegraphics[width=0.8\textwidth]{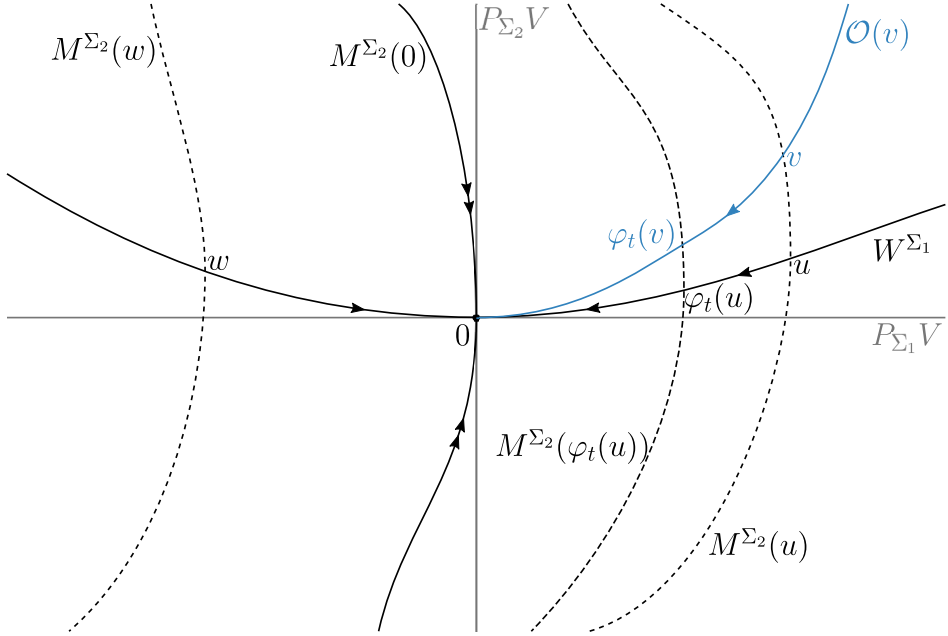}
\centering
\caption{Sketch of what the results of Theorem~\ref{thm:main} imply 
in the special case of a stable equilibrium,
for an operator $\mathcal{A}_U$ with spectrum according to Figure~\ref{fig:spectrum_init}.
Here, $W^{\Sigma_1}$ is a spectral submanifold corresponding to $\Sigma_1$ and $M^{\Sigma_2}$ is the associated foliation. 
Due to the structure of the spectrum in Figure~\ref{fig:spectrum_init}, $M^{\Sigma_2}(0)$ is the strong-stable manifold of $0$ (equivalent to $U$ of the original system), and $W^{\Sigma_1}$ is a a slow manifold (cf.\ Remark~\ref{remark:slow_mfd}).
}
\label{fig:thm}
\end{figure}

\begin{theorem}
\label{thm:main}
Let $\varphi_t^\rho$ denote the cut-off Navier-Stokes semiflow \eqref{eq:cutoff_semiflow}.
Take $\Sigma_1$, $\Sigma_2$, $\beta_1$, $\beta_2$ as in \eqref{eq:sugma}, let $P_{\Sigma_1}$, $P_{\Sigma_2}$ denote the associated projections,
and take $\rho>0$ sufficiently small. 
Then there is a globally Lipschitz and $C^1$ map 
$\phi_\rho : P_{\Sigma_1}V \to P_{\Sigma_2}V$ with $\phi_\rho(0) = 0$, $D\phi_\rho (0) = 0$, such that the submanifold 
\begin{displaymath}
W^{\Sigma_1}_\rho : = \mathrm{graph} ( \phi_\rho) = \left\{ \, u_1 + \phi_\rho(u_1) \; \vert \; u_1 \in P_{\Sigma_1}V  \, \right\}
\end{displaymath}
of $V$ satisfies the following properties:
\begin{enumerate}[label=\upshape{(\roman*)}]
  \item \upshape \textbf{(Invariance)} \itshape The restriction of the semiflow $\varphi_t^\rho$ to $W^{\Sigma_1}_\rho$ leaves the submanifold invariant and can be uniquely extended to a Lipschitz flow $\{\varphi_t^\rho \vert_{W^{\Sigma_1}_\rho} \}_{t \in \mathbb{R}}$ defined for all times.
  \item \upshape \textbf{(Lyapunov exponent)} \itshape Negative semiorbits
  of $\varphi_t^\rho $ are characterized by the following property.
  If $\mathcal{O}_\rho^-(u) \subset W^{\Sigma_1}_\rho$, then
  \begin{displaymath}
    \limsup_{t \to -\infty} \frac{1}{|t|} \ln \Vert \varphi^\rho_t (u) \Vert_V \leq - \beta_1
  \end{displaymath}
  Conversely, if $\mathcal{O}_\rho^-(u) \subset V$ exists and satisfies
  \begin{displaymath}
    \limsup_{t \to -\infty} \frac{1}{|t|} \ln \Vert \varphi^\rho_t (u) \Vert_V < - \beta_2
  \end{displaymath}
  then $\mathcal{O}_\rho^-(u) \subset W^{\Sigma_1}_\rho$.

  \item \label{mainthm:stat3} \upshape \textbf{(Invariant foliation)} \itshape There exists a continuous map $\psi_\rho : V \times P_{\Sigma_2} V \to P_{\Sigma_1}V$ such that for each $u \in W^{\Sigma_1}_\rho$, $\psi_\rho(u,P_{\Sigma_2} u ) = P_{\Sigma_1} u$ and the manifold $M^{\Sigma_2}_\rho(u) = \{ \, \psi_\rho(u,v_2)+v_2 \; | \; v_2 \in P_{\Sigma_2}V \, \}$ passing through $u$ satisfies
  \begin{gather}
      \varphi_t^\rho ( M^{\Sigma_2}_\rho(u) ) \subset M^{\Sigma_2}_\rho(\varphi_t^\rho(u)), \qquad t \geq 0, \label{eq:fibrepreserving} \\
      M^{\Sigma_2}_\rho(u) = \left\{ \, v \in V \; \Big\vert \; \limsup_{t \to \infty} \frac1t \ln \Vert \varphi_t^\rho(u) - \varphi_t^\rho (v) \Vert_V \leq  \beta_2 \,  \right\}. \label{eq:Foliationdecay}
  \end{gather}
  Moreover, $\psi_\rho : V \times P_{\Sigma_2} V \to P_{\Sigma_1}V$ is uniformly Lipschitz and
  $C^1$ with respect to its second argument. 
  
  \item \label{mainthm:stat4} \upshape \textbf{(Completeness)} \itshape For every $u \in V$, $M^{\Sigma_2}_\rho(u) \cap W^{\Sigma_1}_\rho$ is a single point. 
  In particular, 
  \begin{displaymath}
    M^{\Sigma_2}_\rho(u) \cap M^{\Sigma_2}_\rho(v) = \emptyset \quad \text{for } u,v \in W^{\Sigma_1}_\rho, u \neq v; \qquad \bigcup_{u \in W^{\Sigma_1}_\rho} M^{\Sigma_2}_\rho(u) = V;
  \end{displaymath}
  that is, $\{ M^{\Sigma_2}_\rho(u)  \}_{u \in W^{\Sigma_1}_\rho}$ form a foliation of $V$ over $W^{\Sigma_1}_\rho$.
\end{enumerate}
\end{theorem}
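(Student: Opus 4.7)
The proof plan is to verify the hypotheses of Theorem~1.1 of \cite{chen97} for the cut-off semiflow \eqref{eq:cutoff_semiflow} on the Hilbert space $V$, with respect to the spectral splitting $V = P_{\Sigma_1}V \oplus P_{\Sigma_2}V$, and then read off properties (i)--(iv) as direct consequences of the conclusions of that theorem. The abstract framework there concerns semiflows of the form $\varphi_t = e^{tA} + N(t,\cdot)$ in which $A$ admits an exponential dichotomy associated with a gap $\beta_1 > \beta > \beta_2$ and $N(t,\cdot)$ is globally Lipschitz with constant small relative to this gap.

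First I would assemble the exponential dichotomy for $e^{t\mathcal{A}_U}$. Because $P_{\Sigma_1}V$ is finite-dimensional and $\mathcal{A}_U$-invariant, $e^{t\mathcal{A}_U}P_{\Sigma_1}$ extends to a group on $P_{\Sigma_1}V$ for $t\in\mathbb{R}$, and the spectral information in Theorem~\ref{thm:spect} together with sectoriality (Theorem~\ref{thm:sectorial}) yields estimates of the form
\begin{displaymath}
\Vert e^{t\mathcal{A}_U}P_{\Sigma_1} \Vert_{\mathcal{L}(V)} \leq K\, e^{\beta_1 t},\quad t \leq 0; \qquad \Vert e^{t\mathcal{A}_U}P_{\Sigma_2} \Vert_{\mathcal{L}(V)} \leq K\, e^{\beta_2 t},\quad t \geq 0,
\end{displaymath}
which are exactly the dichotomy bounds required in \cite{chen97}. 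Using the box norm $\Vert \cdot \Vert_b$ makes $P_{\Sigma_1},P_{\Sigma_2}$ norm-one projections and hence streamlines the matching of the constants across the spectral decomposition.

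Next I would feed in the cut-off nonlinearity. Lemma~\ref{lemma:lipsic2} gives the crucial estimate
\begin{displaymath}
\mathrm{Lip}_{V \to V}(N_\rho(t,\cdot)) \leq \rho\, C_{\alpha,p,d}\, t^{1-\alpha}\max\{1,e^{2t\omega}\},
\end{displaymath}
and since $\rho$ is a free parameter, it can be shrunk so that, on a conveniently chosen time-$T$ map $\varphi^\rho_T$, the smallness hypothesis of Theorem~1.1 in \cite{chen97} is satisfied for the prescribed gap $(\beta_1,\beta_2)$. The $C^1$ regularity of $\phi_\rho$ then follows from $F_\rho \in C^\infty(V,L^p_\sigma)$ (Lemma~\ref{lemma:F}) combined with the smoothing bound \eqref{eq:commonplace_bound}, upon a further reduction of $\rho$ if needed so that the analogous contraction on the derivative remains.

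With the hypotheses verified, properties (i)--(iv) become translations of the abstract conclusions. Property (i) is immediate once backward orbits on $W^{\Sigma_1}_\rho$ are shown to exist uniquely, which is encoded in the Lyapunov characterization (ii) identifying $W^{\Sigma_1}_\rho$ with precisely those points admitting a backward orbit decaying at least at rate $-\beta_1$; the converse direction in (ii) is the usual argument that any negative semiorbit whose backward growth is slower than $\beta_2$ must be the graph point $u_1+\phi_\rho(u_1)$. Property (iii)---the leaves $M^{\Sigma_2}_\rho(u)$, the fibre-preserving relation \eqref{eq:fibrepreserving}, the synchronization rate \eqref{eq:Foliationdecay}, and the $C^1$ dependence on the transverse coordinate---comes as a package from \cite{chen97}; completeness (iv) follows from the graph structure of $W^{\Sigma_1}_\rho$ and the uniqueness of $\psi_\rho$ in its first argument. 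The main obstacle I anticipate is the bookkeeping required to bridge the discrete time-$T$ formulation in \cite{chen97} and the continuous-time statements, and in particular to ensure that the $t$-dependent Lipschitz factor $t^{1-\alpha}e^{2t\omega}$ from Lemma~\ref{lemma:lipsic2} is compatible with the time-independent Lipschitz hypothesis of the abstract theorem---most likely by fixing $T$, absorbing the factor into the constant, and propagating the resulting conclusions across all $t>0$ via the semiflow property \eqref{eq:semiflow2}.
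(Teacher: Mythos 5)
Your proposal is correct and follows essentially the same route as the paper: both verify hypotheses (H.1)--(H.4) of Theorem~1.1 in Chen et al.\ by combining the exponential dichotomy for $e^{t\mathcal{A}_U}$ on $P_{\Sigma_1}V\oplus P_{\Sigma_2}V$ with the Lipschitz control on $N_\rho$ from Lemma~\ref{lemma:lipsic2}, and then read off (i)--(iv) as direct translations of the abstract conclusions. The only small point to sharpen is that the dichotomy bounds must be strict relative to $e^{\beta_1},e^{\beta_2}$, which the paper arranges by passing to $\beta_1+\varepsilon$ and $\beta_2-\varepsilon$ (possible since $\mu_1>\beta_1>\beta_2>\mu_2$); and your anticipated ``discrete-vs-continuous'' bookkeeping is resolved exactly as you guessed, by fixing $\tau=1$, setting $L=e^{\mathcal{A}_U}$ and $N=N_\rho(1,\cdot)$.
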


\begin{proof}
We shall verify hypotheses (H.1)-(H.4) of \cite{chen97}
for the case of 
$\varphi_t^\rho$, recalled below.
\begin{enumerate}[label =(\textbf{H.\arabic*})]
  \item $\varphi_t^\rho (u)$ is continuous as a map  $\mathbb{R}^{>0} \times V \to V$ and there exists a constant $q > 0$ such that 
  \begin{displaymath}
    \sup_{0 \leq t \leq q} \mathrm{Lip}_V ( \varphi_t^\rho ) <\infty.
  \end{displaymath}
  
  \item There exists a $\tau \in (0,q]$ such that $\varphi_\tau^\rho$ can be decomposed as $\varphi_\tau^\rho = L + N$ where $L \in \mathcal{L}(V)$ and $N: V \to V$ is globally Lipschitz.
  
  \item There are subspaces $V_i \subset V$, $i =1,2$ and continuous projections $P_i : V \to V_i$ such that $P_1 + P_2 = I$, $V = V_1 \oplus V_2$, $L$ leaves $V_i$ invariant and $L$ commutes with $P_i$, $i = 1,2$. 
  Denoting by $L_i:V_i \to V_i$ the restrictions of $L$, $L_1$ has bounded inverse and there exist constants $\alpha_1 > e^{\beta_1} > e^{\beta_2} > \alpha_2 \geq 0$, $C_1 \geq 1$, and $C_2 \geq 1$ such that
  \begin{gather}
      \Vert L_1^{-k} P_1\Vert_{\mathcal{L}(V)} \leq C_1 \alpha_1^{-k}, \qquad k \geq 0, \label{eq:requiredLbounds1}\\
      \Vert L_2^{k} P_2\Vert_{\mathcal{L}(V)} \leq C_2 \alpha_2^{k},  \qquad k \geq 0.
      \label{eq:requiredLbounds2}
  \end{gather}
  
  \item The operators $L$ and $N$ satisfy
  \begin{equation}
    \frac{(\sqrt{C_1}+\sqrt{C_2})^2}{\alpha_1 - \alpha_2} \mathrm{Lip}_V ( N ) < 1.
    \label{eq:lipschitz_requirement}
  \end{equation}
\end{enumerate}
Points (\textbf{H.1})-(\textbf{H.2}) were contents of Lemmas \ref{lemma:F} and \ref{lemma:lipsic2}, with $\tau = 1$, $L= e^{\mathcal{A}_U}$ and $N(u) = N_\rho(1,u)$ via definition \eqref{eq:Nrho}.
A decomposition such as the one required by (\textbf{H.3}) 
follows from \eqref{eq:Vdecomposition} with $V_i = P_{\Sigma_i} V$ and $L$ as above.

For $\varepsilon >0$ small enough such that $\beta_1+\varepsilon<\mu_1$ and $\beta_2 -\varepsilon > \mu_2$,
the restricted semigroup satisfies the bounds 
\begin{displaymath}
    \left\Vert \left( e^{ t \mathcal{A}_U }  P_{\Sigma_1} \right)^{-1} \right\Vert_{\mathcal{L}(V)} 
    \leq C_1 e^{- t (\beta_1 +\varepsilon) }
\end{displaymath}
and 
\begin{displaymath}
    \Vert e^{t \mathcal{A}_U } P_{\Sigma_2} \Vert_{\mathcal{L}(V)} \leq C_2 e^{t(\beta_2 - \varepsilon) }
\end{displaymath}
by the Hille-Yosida theorem,
  which imply \eqref{eq:requiredLbounds1}-\eqref{eq:requiredLbounds2} with $\alpha_1 = e^{ \beta_1 +\varepsilon }$ and $\alpha_2 = e^{\beta_2 - \varepsilon }$. 
 Condition (\textbf{H.4}) is achieved by adjusting $\mathrm{Lip} (N_\rho (1, \cdot))$ through $\rho$ appropriately (cf.~\eqref{eq:lipN}).
  With $\alpha_1$ and $\alpha_2$ as above, we may choose the bounds on Lyapunov exponents to be $\beta_1$ and $\beta_2$ (labeled as $\gamma_1$ and $\gamma_2$ in the conclusions of Theorem~1, \cite{chen97}).
\end{proof}

In the following, we shall assume that $\rho>0$ is selected such that
\begin{equation}
    \mathrm{Lip}_{P_{\Sigma_1}V \to P_{\Sigma_2}V} (\phi_\rho) <1 \qquad \text{and} \qquad \mathrm{Lip}_{P_{\Sigma_2}V \to P_{\Sigma_1}V} (\psi_\rho (v,\cdot)) <1
   \label{eq:lipschitz_assumption}
\end{equation}
hold for each $v\in V$.

The constructed manifolds are generally smoother than $C^1$ --
the question of smoothness, however, is postponed to the latter parts of this section, once the uniqueness issue is settled (see Remark~\ref{remark:smoothness}).

Note that 
the foliation in statements \ref{mainthm:stat3} and \ref{mainthm:stat4} 
may be alternatively characterized via a (nonlinear) projection map $\pi_\rho: V \to W^{\Sigma_1}_\rho$. 
Indeed, an application of the (parametric) Banach fixed point theorem to the map 
\begin{gather*}
     g: V \times P_{\Sigma_1} V \to  P_{\Sigma_1} V \\ 
    (v,v_1) \mapsto \psi_\rho(v,\phi_\rho(v_1))
\end{gather*}
shows that, for each $v \in V$, the map $g(v,\cdot) : P_{\Sigma_1} V \to P_{\Sigma_1} V$ has a unique fixed point.
Moreover, if we denote this fixed point by $f (v)$, then the map $ f: V \to P_{\Sigma_1} V$ is continuous.
Now, defining $\pi_\rho: V  \to  W^{\Sigma_1}_\rho$ as
\begin{equation}
   \pi_\rho(v): = f(v)+\phi_\rho \circ f (v)
   \label{eq:pi_rho}
\end{equation}
yields the desired projection map. 
In this terminology, $M^{\Sigma_2}_\rho(u) = \pi_\rho^{-1}(u)$ are the leaves of the foliation, and the following diagram commutes
\begin{equation}
\begin{tikzcd}
    V \arrow[r, "\varphi_t^\rho"] \arrow[d, "\pi_\rho"']
    & V \arrow[d, "\pi_\rho"] \\
    W^{\Sigma_1}_\rho \arrow[r, "\varphi_t^\rho \vert_{W^{\Sigma_1}_\rho}"']
    &  W^{\Sigma_1}_\rho
    \label{eq:fibrepreserving2}
\end{tikzcd}
\end{equation}
for $t \geq 0$ according to \eqref{eq:fibrepreserving}.

Theorem~\ref{thm:main} gives global and unique $C^1$ submanifolds of $V$ with the desired properties -- but only for the modified system governed by $\varphi^\rho$.
The goal in the following segment is to extend these results as best as possible to the full semiflow $\varphi$,
making use of the fact that $\varphi \equiv \varphi^\rho$ on $B_{\rho/2}(0)$.
As the foregoing argument already indicates,
the majority of results deduced for the full system $\varphi$ will only hold locally, in a small neighborhood of the origin (i.e., the base state $U \in V$).

The following corollary of Theorem~\ref{thm:main} shows that spectral submanifolds $W^{\beta}_{\mathrm{loc},r}$ (Definition~\ref{def:wloc}) and their corresponding foliations $M^\beta_{\mathrm{loc},r}$ (Definition~\ref{def:foliation})
exist for each $\beta \in \mathbb{R}$ bisecting the spectrum, 
in the present context for the Navier-Stokes semiflow. Uniqueness, however, is not guaranteed.

\begin{corollary}
\label{cor:Wloc}
Let $\varphi_t$ denote the semiflow generated by the Navier-Stokes equations on $V$, and let $\mathcal{A}_U$ be as above.
For any $\beta < \max \mathrm{Re} \, \sigma ( \mathcal{A}_U ),$  $\beta \notin \mathrm{Re} \, \sigma ( \mathcal{A}_U )$  
and for $r>0$ small enough, there exists 
a locally invariant manifold $W^{\Sigma_1}_{\mathrm{loc},r} \subset \widehat{B}_r(0)$ 
with the following properties: 
\begin{enumerate}[label=\upshape{(\roman*)}]
    \item \label{cor1:stat1} $W^{\Sigma_1}_{\mathrm{loc},r}$ is a $C^1$ spectral submanifold corresponding to the spectral subset $\Sigma_1 : = \{ \lambda \in \sigma (\mathcal{A}_U) \, | \, \mathrm{Re} \, \lambda > \beta \}$, in the sense of Definition~\ref{def:wloc}.
    \item \label{cor1:stat2} 
    If, for $u \in V$, $\varphi_t (u )$ remains defined for all $t \geq 0$ and $\mathcal{O}^+(u) \subset B_{r_1}(0)$ (with $r_1>0$ small enough), then there exists a unique $v \in W_{\Sigma_1}^{\mathrm{loc},r}$ such that  $\mathcal{O}^+(v) \subset W_{\Sigma_1}^{\mathrm{loc},r}$ and 
    \begin{equation}
        \Vert \varphi_t (u) - \varphi_t (v) \Vert_V \leq C e^{\beta t}, \qquad  t \geq 0.
        \label{eq:cor1_2}
    \end{equation}
\end{enumerate}
Moreover, there exists a $C^0 \times C^1$ foliation for $\widehat{B}_r(0)$ corresponding to the spectral subset $\Sigma_2 = \sigma (\mathcal{A}_U) \setminus \Sigma_1$ (as in Definition~\ref{def:foliation})
that may be represented by a map ${\psi}:\widehat{B}_r(0) \times B^2_{r} (0) \to B^1_{r_1}(0)$ for some $r_1>0$, such that $M_{\mathrm{loc},r}^{\Sigma_2}(u)=\mathrm{graph}({\psi}(u,\cdot)) \cap \widehat{B}_r(0)$ and $\mathrm{Lip}_{P_{\Sigma_2}V \to P_{\Sigma_1}V}({\psi}(u,\cdot))<1$ for each $u \in \widehat{B}_r(0)$.

\end{corollary}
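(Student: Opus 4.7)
The plan is to derive the corollary by restricting the global objects of Theorem~\ref{thm:main} to a small enough neighborhood of the origin, exploiting the identity $\varphi \equiv \varphi^\rho$ on $B_{\rho/2}(0)$ -- the ball on which the cut-off $\chi_\rho$ equals one. I would first choose $r \in (0, \rho/2)$ so that $\widehat{B}_r(0) \subset B_{\rho/2}(0)$, and define
$$
W^{\Sigma_1}_{\mathrm{loc},r} := W^{\Sigma_1}_\rho \cap \widehat{B}_r(0).
$$
Verification of Definition~\ref{def:wloc} is then largely immediate: tangency at the origin and $C^1$-regularity descend from $\phi_\rho$, while local invariance follows because whenever $\varphi_\tau(u) \in \widehat{B}_r(0)$ for all $\tau \in [0,t]$, the two semiflows agree pointwise and the invariance of $W^{\Sigma_1}_\rho$ under $\varphi^\rho$ transfers to $\varphi$.

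The attractivity statement (ii) is the main technical content. Given $u$ with $\mathcal{O}^+(u) \subset B_{r_1}(0)$, I would take $v := \pi_\rho(u)$ with $\pi_\rho$ as in \eqref{eq:pi_rho}. The key observation is the commutative diagram \eqref{eq:fibrepreserving2}, which gives
$$
\varphi_t^\rho(v) = \pi_\rho(\varphi_t^\rho(u)), \qquad t \geq 0.
$$
Using continuity of $\pi_\rho$ together with $\pi_\rho(0) = 0$, I can shrink $r_1$ so that $\pi_\rho(B_{r_1}(0)) \subset \widehat{B}_r(0)$; the displayed identity then forces the entire forward orbit of $v$ to remain in $\widehat{B}_r(0) \cap W^{\Sigma_1}_\rho = W^{\Sigma_1}_{\mathrm{loc},r}$ and, in particular, inside $B_{\rho/2}(0)$ where $\varphi^\rho$ and $\varphi$ coincide. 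Uniqueness of $v$ follows from the completeness assertion \ref{mainthm:stat4} of Theorem~\ref{thm:main}, which implies that $W^{\Sigma_1}_\rho$ intersects each leaf $M^{\Sigma_2}_\rho(u)$ in a single point. Finally, the decay bound \eqref{eq:cor1_2} is extracted from \eqref{eq:Foliationdecay} by choosing $\varepsilon > 0$ with $\beta_2 + \varepsilon < \beta$ and absorbing the initial transient into the constant $C$.

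For the foliation part, I would define
$$
M^{\Sigma_2}_{\mathrm{loc},r}(u) := \mathrm{graph}(\psi_\rho(u,\cdot)) \cap \widehat{B}_r(0), \qquad u \in \widehat{B}_r(0),
$$
and set $\psi$ as the restriction of $\psi_\rho$ to $\widehat{B}_r(0) \times B^2_r(0)$. The requisite Lipschitz bound is inherited from \eqref{eq:lipschitz_assumption}, and the image is contained in some $B^1_{r_1}(0)$ by combining this bound with $\psi_\rho(0,0)=0$. Verification of the five conditions of Definition~\ref{def:foliation} proceeds by direct restriction of the corresponding global assertions of Theorem~\ref{thm:main}: property (i) translates \eqref{eq:fibrepreserving} using $\varphi \equiv \varphi^\rho$ inside $B_{\rho/2}(0)$; (ii) is $u = \psi_\rho(u, P_{\Sigma_2}u) + P_{\Sigma_2}u$; (iii) follows from completeness \ref{mainthm:stat4}; (iv) uses that $D_{v_2}\psi_\rho(0,0)=0$ at the fixed point; and the $C^0 \times C^1$ regularity in (v) follows from the joint continuity and fibrewise $C^1$-smoothness of $\psi_\rho$ furnished by Theorem~\ref{thm:main}.

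The main obstacle, I expect, is controlling the forward orbit of $v$ without any sign assumption on $\beta$: since $\beta$ may well be positive, one cannot argue via a simple triangle inequality that $v$ stays close to $u$ through the decay estimate alone. The commutative diagram \eqref{eq:fibrepreserving2} bypasses this entirely by replacing a dynamical estimate with the purely topological one $\varphi^\rho_t(v) \in \pi_\rho(B_{r_1}(0))$, which can be made to lie in $\widehat{B}_r(0)$ by shrinking $r_1$.
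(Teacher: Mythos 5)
Your proposal is correct and follows essentially the same route as the paper: restrict $W^{\Sigma_1}_\rho$ and the foliation to a box-ball small enough that $\varphi$ and $\varphi^\rho$ agree, and obtain $v := \pi_\rho(u)$ via the fibre projection, using continuity of $\pi_\rho$ at the origin together with the commuting diagram \eqref{eq:fibrepreserving2} to keep the forward orbit of $v$ inside the restriction domain. One small slip: choosing $r \in (0, \rho/2)$ does \emph{not} guarantee $\widehat{B}_r(0) \subset B_{\rho/2}(0)$, since the norm equivalence $\Vert \cdot \Vert_b \leq \Vert \cdot \Vert_V \leq 2\Vert \cdot \Vert_b$ only gives $\widehat{B}_r(0) \subset B_{2r}(0)$; you need $r < \rho/4$ (as the paper states) for the inclusion to hold.
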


\begin{proof}
We can choose $\mu_2$ and $\mu_1$ to be elements of $\mathrm{Re} \, \sigma ( \mathcal{A}_U )$ adjacent to $\beta$, so that $ \mu_2 < \beta < \mu_1 $ and the spectrum splits into a disjoint union of $\Sigma_1$ and $\Sigma_2$ as in \eqref{eq:sugma},
by the assumptions on $\beta$ and the discreteness of $\sigma ( \mathcal{A}_U ) $. 
Take a cutoff function $\chi_\rho$ such that the assumptions of Theorem~\ref{thm:main} are met. For any $r < \rho/4$, we
define 
\begin{equation}
    W^{\Sigma_1}_{\mathrm{loc},r}: = W^{\Sigma_1}_\rho \cap \widehat{B}_r(0);
    \label{eq:Wlocdef}
\end{equation}
now statement \ref{cor1:stat1} follows, since $\varphi \equiv \varphi^\rho$ on $\widehat{B}_r(0)$.

With $\chi_\rho$ fixed as above, set $v : = \pi_\rho (u)$, with $\pi_\rho:V \to W^{\Sigma_1}_\rho$ as in \eqref{eq:pi_rho}. 
Using continuity of $\pi_\rho$ at $0$, we may choose $r_1 \in (0,r)$ such that $\Vert \varphi_t^\rho \circ \pi_\rho  (u) \Vert_b = \Vert \pi_\rho \circ \varphi_t^\rho (u) \Vert_b \leq r$ whenever $ \varphi_t (u)$ and thus $\varphi_t^\rho (u)$ is contained in $B_{r_1}(0)$.
This also implies that $\mathcal{O}^+(v) = \mathcal{O}_\rho^+(v) \subset \widehat{B}_r(0)$.
Clearly, $v \in W^{\Sigma_1}_{\mathrm{loc},r}$ and hence, by statement \ref{cor1:stat1}, 
$\mathcal{O}^+(v) \subset W^{\Sigma_1}_{\mathrm{loc},r}$.
Equation \eqref{eq:cor1_2} follows from the characterization \eqref{eq:Foliationdecay} of $\pi_\rho^{-1}(v)$ and the choice of $\mu_2$, since both $\mathcal{O}^+(v)$ and $\mathcal{O}^+(u)$ are contained in $\widehat{B}_r(0)$ where $\varphi$ and $\varphi^\rho$ agree.

As for the 'moreover' part of the statement, it is clear that the properties outlined in Definition~\ref{def:foliation} hold if the local family of manifolds is defined by
\begin{displaymath}
   M_{\mathrm{loc},r}^{\Sigma_2}(u) := M_\rho^{\Sigma_2}(u) \cap \widehat{B}_r(0),
\end{displaymath}
with $r$ chosen as above. 
In particular, the map $\psi$ that characterizes the local foliation is given by simply restricting $\psi_\rho$ to the appropriate domain -- the Lipschitz condition follows from \eqref{eq:lipschitz_assumption}.
\end{proof}

The most important aspect of the above corollary - from the point of view of model reduction - is that, subject to adjustments to $\Sigma_1$, \eqref{eq:cor1_2} allows for control over the decay rate towards the invariant manifold $W^{\Sigma_1}_{\mathrm{loc},r}$ of trajectories that remain in a neighborhood of the base state.

The source of non-uniqueness of $W^{\Sigma_1}_{\mathrm{loc},r}$ stems from its dependence on the choice of cutoff function, which crucially affects $W^{\Sigma_1}_\rho$
in 
its definition \eqref{eq:Wlocdef} (see \cite{Sijbrand85} for a much more detailed discussion).
It is clear, however, that
once a cutoff function is fixed, the manifold $W^{\Sigma_1}_{\mathrm{loc},r}$ is uniquely determined by \eqref{eq:Wlocdef}.
In fact, the converse is also true:
If $W^{\Sigma_1}_{\mathrm{loc},r}$ is any manifold satisfying Definition~\ref{def:wloc}
then there exists a cutoff function $\chi_\rho$ such that $W^{\Sigma_1}_{\mathrm{loc},r} = W^{\Sigma_1}_\rho \cap \widehat{B}_r(0)$ (this fact is discussed for the case of center manifolds in \cite{Sijbrand85}).
In particular, for any given $W^{\Sigma_1}_{\mathrm{loc},r}$, we may conclude that Corollary~\ref{cor:Wloc}\ref{cor1:stat2} holds with $\beta > \sup_{\lambda \in \Sigma_2} \; \mathrm{Re} \lambda$.

\subsection{The question of uniqueness}
\label{sect:uniqueness}

Whenever invariant manifolds are constructed with the objective of being used as reduced order models,
uniqueness
becomes a crucial 
aspect of discussion.
Indeed, if two or more manifolds exist with the same defining properties, 
the dynamics they predict on a global scale may be vastly different (see the introduction to \cite{Haller2016}, for instance).
Therefore, in this section,
we take the time to give a 
detailed account of the uniqueness issue.
A classical reference for such considerations for finite dimensional manifolds is the work of Hirsch, Pugh and Shub \cite{hirsch1970invariant}.

It is also worth noting that the results to follow are independent of the specific PDE studied, as long as $\varphi_t$ is a sufficiently smooth semiflow and its cut-off version $\varphi_t^\rho$ satisfies the assumptions of Theorem~\ref{thm:main}.

In preparation, 
let us give a few different characterizations of $M_\rho^{\Sigma_2}(u)$ and $W^{\Sigma_1}_\rho$.

\begin{lemma}
\label{lemma:mfd_charact}
Let $\varphi_t^\rho$ denote semiflow on $V$ associated to the cut-off dynamical system and let $F_\rho$, $\mathcal{A}_U$, $\mu_1$, $\mu_2$ be as above.
Assume that $\rho>0$ is chosen such that $\mathrm{Lip}_{V \to L^p_\sigma} ( F_\rho)$ is sufficiently small, and fix $u \in V$.
Then, the following characterizations of the leaf 
of the foliation passing through $u$ are equivalent
\begin{align*}
    M_\rho^{\Sigma_2}(u) &= \left\{ \, v \in V \, \Big\vert \, \limsup_{t \to \infty} \frac{1}{t} \ln{ \Vert \varphi_t^\rho (u ) - \varphi_t^\rho (v )\Vert_V} \leq \mu, \text{ for some } \mu \in [\mu_2,\mu_1) \, \right\} \\
    &= \left\{ \, v \in V \, \Big\vert \, \limsup_{t \to \infty} \frac{1}{t} \ln{ \Vert \varphi_t^\rho (u ) - \varphi_t^\rho (v )\Vert_V} \leq \mu, \text{ for all } \mu \in [\mu_2,\mu_1) \, \right\} \\
    &= \left\{ \, v \in V \, \Big\vert \, \Vert \varphi_t^\rho (u ) - \varphi_t^\rho (v ) \Vert_V \leq C e^{\mu t} \Vert u-v \Vert_V \text{ for all } t \geq 0 \text{ and } \mu \in (\mu_2,\mu_1) \, \right\}.
\end{align*}
\end{lemma}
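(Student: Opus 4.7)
Let me denote the three sets in the statement by $A_1$, $A_2$, and $A_3$ respectively, and write $w_t := \varphi_t^\rho(u) - \varphi_t^\rho(v)$ throughout. The plan is to prove $A_1 = A_2 = A_3 = M^{\Sigma_2}_\rho(u)$. Two inclusions are essentially immediate: $A_2 \subseteq A_1$ follows by taking $\mu = \mu_2$, and $A_3 \subseteq A_2$ follows since an exponential bound $\|w_t\|_V \leq C e^{\mu t} \|u-v\|_V$ valid for every $\mu \in (\mu_2, \mu_1)$ forces $\limsup_{t \to \infty} t^{-1} \ln \|w_t\|_V \leq \mu$, which upon letting $\mu \downarrow \mu_2$ yields membership in $A_2$.

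Next, I would identify $A_1$ and $A_2$ with the leaf $M^{\Sigma_2}_\rho(u)$ by observing that the spectral decomposition $\sigma(\mathcal{A}_U) = \Sigma_1 \cup \Sigma_2$ is unchanged for every splitting parameter $\beta \in (\mu_2, \mu_1)$. Consequently Theorem~\ref{thm:main}\ref{mainthm:stat3} supplies the \emph{same} manifold $M^{\Sigma_2}_\rho(u)$ but with the characterization $\limsup \leq \beta$ valid for each such $\beta$. Taking $\beta \downarrow \mu_2$ yields $M^{\Sigma_2}_\rho(u) \subseteq A_2$; conversely, for $v \in A_1$ one selects a suitable $\beta \in (\max(\mu_2, \limsup_v), \mu_1)$ and recovers $v \in M^{\Sigma_2}_\rho(u)$ via the corresponding version of the characterization. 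Together with $A_2 \subseteq A_1$, this closes the loop to $A_1 = A_2 = M^{\Sigma_2}_\rho(u)$.

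The essential step is then $A_2 \subseteq A_3$. I would decompose $w_t$ along $V = P_{\Sigma_1}V \oplus P_{\Sigma_2}V$. On the finite-dimensional subspace $P_{\Sigma_1}V$ the operator $\mathcal{A}_U P_{\Sigma_1}$ is bounded with spectrum $\Sigma_1$ and so generates a genuine group; since $v \in A_2$ forces $\|w_t\|_V$ to decay faster than any $e^{\mu t}$ with $\mu > \mu_2$, the mild formulation may be rearranged into
\[ P_{\Sigma_2}w_t = e^{t\mathcal{A}_U}P_{\Sigma_2}(u-v) + \int_0^t e^{(t-s)\mathcal{A}_U}P_{\Sigma_2}\eta_s\,ds, \quad P_{\Sigma_1}w_t = -\int_t^\infty e^{(t-s)\mathcal{A}_U}P_{\Sigma_1}\eta_s\,ds, \]
with $\eta_s := F_\rho(\varphi_s^\rho(u)) - F_\rho(\varphi_s^\rho(v))$. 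Applying the dichotomy estimates $\|e^{t\mathcal{A}_U}P_{\Sigma_2}\|_{\mathcal{L}(L^p_\sigma,V)} \leq C_\varepsilon t^{-\alpha} e^{(\mu_2+\varepsilon)t}$ (via \eqref{eq:commonplace_bound}) and $\|e^{-t\mathcal{A}_U}P_{\Sigma_1}\|_{\mathcal{L}(V)} \leq C_\varepsilon e^{-(\mu_1-\varepsilon)t}$ for arbitrarily small $\varepsilon>0$, combined with $\|\eta_s\|_{L^p_\sigma} \leq \rho C_{p,d} \|w_s\|_V$ from Lemma~\ref{lemma:F}, and multiplying through by $e^{-\mu t}$ for a fixed $\mu \in (\mu_2 + \varepsilon, \mu_1 - \varepsilon)$, I arrive at an integral inequality of the form
\[ e^{-\mu t} \|w_t\|_V \leq C \|u-v\|_V + \rho K_\mu \sup_{s \geq 0}\bigl(e^{-\mu s}\|w_s\|_V\bigr), \]
with $K_\mu < \infty$ emerging from the convergent integrals $\int_0^\infty s^{-\alpha} e^{(\mu_2+\varepsilon-\mu)s}\,ds$ and $\int_0^\infty e^{(\mu - \mu_1 + \varepsilon)s}\,ds$.

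The main obstacle is ensuring that the supremum $\|w\|_\mu := \sup_{s \geq 0} e^{-\mu s} \|w_s\|_V$ is a priori finite before the absorption step can be executed legitimately. I would handle this by first establishing the analogous inequality on truncations $[0,T]$, on which finiteness of the supremum is immediate from the continuity of $s \mapsto \|w_s\|_V$ together with the Lipschitz property of Lemma~\ref{lemma:lipsic2}, and then using the decay forced by $v \in A_2$ to pass to $T \to \infty$ with a bound uniform in $T$. Since $\rho>0$ is selected small enough (as already required in Theorem~\ref{thm:main}) to force $\rho K_\mu < 1$, the absorption finally produces $\|w\|_\mu \leq C(1-\rho K_\mu)^{-1} \|u-v\|_V$, which is precisely the quantitative bound demanded by $A_3$.
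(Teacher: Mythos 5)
Your proposal is correct, and the overall scaffolding — establishing the cheap inclusions $A_3 \subseteq A_2 \subseteq A_1$, identifying $A_1 = A_2 = M_\rho^{\Sigma_2}(u)$ by exploiting the $\mu$-independence of the leaf produced by Theorem~\ref{thm:main}\ref{mainthm:stat3} across splitting parameters $\beta \in (\mu_2,\mu_1)$, then closing the loop with a Gronwall-type absorption for the remaining inclusion — agrees with the paper. The genuine difference lies in how that absorption step is set up. The paper starts from $v \in M_\rho^{\Sigma_2}(u)$ and observes that fibre invariance forces $\varphi_t^\rho(u)$ and $\varphi_t^\rho(v)$ to stay on the \emph{same} leaf, whose graph-Lipschitz constant over $P_{\Sigma_2}V$ is less than one (cf.\ \eqref{eq:lipschitz_assumption}); this gives the pointwise inequality $\Vert w_t \Vert_V < 2 \Vert P_{\Sigma_2} w_t \Vert_V$, so it suffices to run Gronwall on the \emph{forward} mild formula for the $P_{\Sigma_2}$ component alone. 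Your route instead writes a full Lyapunov–Perron system, treating $P_{\Sigma_1}w_t$ via the backward integral $-\int_t^\infty e^{(t-s)\mathcal{A}_U}P_{\Sigma_1}\eta_s\,ds$, with convergence supplied by the sub-$\mu_2$ Lyapunov exponent coming from $v \in A_2$. Both are sound; the paper's trick is shorter and avoids the backward integral and its boundary-term bookkeeping (which your truncation scheme must in fact track — the truncated backward formula carries a $e^{-(T-t)\mathcal{A}_U}P_{\Sigma_1}w_T$ term that has to be shown to vanish as $T \to \infty$, rather than simply restricting the sup to $[0,T]$), while your version is self-contained in that it does not lean on the Lipschitz-graph structure of $\psi_\rho$. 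A small simplification you could make: once $v \in A_2$ is in hand, $\sup_{s\geq 0} e^{-\mu s}\Vert w_s\Vert_V$ is already finite (not merely finite on truncations), so the truncation is only needed to handle the boundary term of the backward integral, not the a priori finiteness itself.
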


\begin{proof}
We first note that the first two sets on the right hand side are equivalent.
Recall that Theorem~\ref{thm:main}\ref{mainthm:stat3} provides a continuous map $\psi_\rho : V \times P_{\Sigma_2}V \to P_{\Sigma_1}V$ such that
\begin{equation}
    \mathrm{graph}(\psi_\rho(u,\cdot)) = \left\{ \, v \in V \, \Big\vert \, \limsup_{t \to \infty} \frac{1}{t} \ln{ \Vert \varphi_t^\rho (u ) - \varphi_t^\rho (v )\Vert_V} \leq \mu\, \right\},
    \label{eq:foliation_lem}
\end{equation}
with $\mathrm{Lip}_{P_{\Sigma_2}V \to P_{\Sigma_1}V} (\psi_\rho (u,\cdot)) <1$ (up to possibly shrinking $\rho>0$) for all $u \in V$.
The construction does not depend on the choice of $\mu$ (in place of $\beta_2$ of Theorem~\ref{thm:main}), and hence \eqref{eq:foliation_lem} continues to hold over $\mu \in (\mu_2,\mu_1)$, and thus also for $\mu = \mu_2$.

Next, we show that the first set on the right hand side is contained in the third one.
Fix $\psi_\rho$ as above and take $v \in V$ from the first set.
Given that $\Vert P_{\Sigma_2} e^{\mathcal{A}_U t} \Vert_{\mathcal{L}(V)} \leq C e^{(\mu_2+\varepsilon)t}$ holds for any $\varepsilon>0$,
we may take $0<\varepsilon < \mu - \mu_2$ so that 
\begin{multline*}   
\left\Vert e^{-\mu t} P_{\Sigma_2} \left( \varphi_t^\rho(u) - \varphi_t^\rho(v) \right) \right\Vert_V \leq C_0 e^{(\mu_2 + \varepsilon - \mu)t} \Vert u - v \Vert_V \\
+ e^{-\mu t} \int_0^t C_\alpha e^{(\mu_2+\varepsilon) (t-s)} (t-s)^{-\alpha} \mathrm{Lip}_{V \to L^p_\sigma} ( F_\rho) e^{\mu s} \left\Vert e^{-\mu s}  \left(  \varphi_s^\rho(u) - \varphi_s^\rho(v) \right) \right\Vert_V ds
\end{multline*}
is satisfied.
Now, since $v \in M^{\Sigma_2}_\rho (u)$, we have that $\varphi^\rho_t(v) \in M^{\Sigma_2}_\rho (\varphi_t^\rho(u))$ and hence
\begin{align*}
    \left\Vert  \varphi_s^\rho(u) - \varphi_s^\rho(v)  \right\Vert_V &\leq (1+ \mathrm{Lip}_{P_{\Sigma_2}V \to P_{\Sigma_1}V} (\psi_\rho (\varphi_t^\rho(u),\cdot)))  \left\Vert P_{\Sigma_2}  \left(  \varphi_s^\rho(u) - \varphi_s^\rho(v) \right) \right\Vert_V \\
    &< 2 \left\Vert P_{\Sigma_2}  \left(  \varphi_s^\rho(u) - \varphi_s^\rho(v) \right) \right\Vert_V.
\end{align*}
Adjusting $\rho>0$ further so that
\begin{displaymath}
  k_\rho := 2 \mathrm{Lip}_{V \to L^p_\sigma} ( F_\rho) \int_0^\infty C_\alpha e^{(\mu_2+\varepsilon-\mu) y} y^{-\alpha} dy < 1
\end{displaymath}
holds, we obtain
 \begin{displaymath}
  \sup_{t \in (0,T)}     \left\Vert e^{-\mu t} P_{\Sigma_2} \left( \varphi_t^\rho(v) - \varphi_t^\rho(w) \right) \right\Vert_V \leq C_0  (1-k_\rho )^{-1} \Vert v - w \Vert_V.
\end{displaymath}
Since the right hand side is independent of $T$, we may take $T \to \infty$,
which yields 
 \begin{align*}
     \left\Vert  \varphi_s^\rho(u) - \varphi_s^\rho(v)  \right\Vert_V &< 2 \left\Vert  P_{\Sigma_2} \left( \varphi_t^\rho(v) - \varphi_t^\rho(w) \right) \right\Vert_V \\ &\leq 2 C e^{\mu t} \Vert v - w \Vert_V, \qquad \qquad t \geq 0,
\end{align*}
as desired.

Lastly, if $v$ is an element of the third set, then
\begin{displaymath}
   \limsup_{t \to \infty} \frac{1}{t} \ln \left\Vert \varphi_t^\rho (u) -  \varphi_t^\rho(v) \right\Vert_V \leq \mu
\end{displaymath}
holds for all $\mu \in (\mu_2,\mu_1)$, and thus $v$ is also contained in the first set.
\end{proof}

The corresponding result for $W^{\Sigma_1}_\rho$ is derived analogously, and hence is stated here without proof.

\begin{lemma}
\label{lemma:mfd1_charact}
Let $\varphi_t^\rho$ denote semiflow on $V$ associated to the cut-off dynamical system and let $F_\rho$, $\mathcal{A}_U$, $\mu_1$, $\mu_2$ be as above. 
Assume that $\rho>0$ is chosen such that $\mathrm{Lip}_{V \to L^p_\sigma} ( F_\rho)$ is sufficiently small.
Then the following characterizations of the constructed manifold $W^{\Sigma_1}_{\rho}$ are equivalent
\begin{align*}
    W^{\Sigma_1}_{\rho} &= \left\{ \, v \in V \, \Big\vert \, \limsup_{t \to -\infty} \frac{1}{|t|} \ln{ \Vert \varphi_t^\rho (v ) \Vert_V} \leq \mu, \text{ for all } \mu \in [-\mu_1,-\mu_2) \, \right\} \\
    &= \left\{ \, v \in V \, \Big\vert \, \limsup_{t \to -\infty} \frac{1}{|t|} \ln{ \Vert \varphi_t^\rho (v ) \Vert_V} \leq \mu, \text{ for some } \mu \in [-\mu_1,-\mu_2) \, \right\} \\
    &= \left\{ \, v \in V \, \Big\vert \, \Vert \varphi_t^\rho (v ) \Vert_V \leq C e^{-\mu t} \Vert v \Vert_V \text{ for all } t \leq 0 \text{ and } \mu \in (-\mu_1,-\mu_2) \, \right\}.
\end{align*}
\end{lemma}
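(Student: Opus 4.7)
The plan is to mirror the argument of Lemma~\ref{lemma:mfd_charact}, swapping forward-time decay into $W^{\Sigma_1}_\rho$ for backward-time decay along $W^{\Sigma_1}_\rho$, with Theorem~\ref{thm:main}(ii) playing the role that Theorem~\ref{thm:main}\ref{mainthm:stat3} played there.

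First I would identify the first two sets with $W^{\Sigma_1}_\rho$ directly from Theorem~\ref{thm:main}(ii). If $v$ satisfies the ``for some'' condition with exponent $\mu \in [-\mu_1,-\mu_2)$, then $\limsup < -\mu_2$, so choosing any $\beta_2 \in (\mu_2,-\mu)$ meets the hypothesis of the second clause of Theorem~\ref{thm:main}(ii) and forces $\mathcal{O}^-_\rho(v) \subset W^{\Sigma_1}_\rho$. Conversely, for $v \in W^{\Sigma_1}_\rho$ the first clause of Theorem~\ref{thm:main}(ii) delivers $\limsup \leq -\beta_1$ for every admissible $\beta_1 < \mu_1$; sending $\beta_1 \uparrow \mu_1$ yields the ``for all'' characterization. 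That the ``for all'' set lies inside the ``for some'' set is automatic, and the exponential (third) bound trivially implies the ``for some'' one after taking logarithms and dividing by $|t|$.

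The only nontrivial direction is $W^{\Sigma_1}_\rho \subset$ third set. Fix $v \in W^{\Sigma_1}_\rho$ and $\mu \in (-\mu_1,-\mu_2)$. Since $W^{\Sigma_1}_\rho = \mathrm{graph}(\phi_\rho)$ with $\mathrm{Lip}_{P_{\Sigma_1}V \to P_{\Sigma_2}V}(\phi_\rho) < 1$ by \eqref{eq:lipschitz_assumption}, the full-norm decay of the backward orbit is comparable to that of its $P_{\Sigma_1}$-projection, so it suffices to bound $g(t) := \Vert P_{\Sigma_1} \varphi_t^\rho(v)\Vert_V$ on $t \leq 0$. The variation of constants formula for $P_{\Sigma_1}\varphi_t^\rho(v)$, combined with the Hille-Yosida bound $\Vert e^{t\mathcal{A}_U} P_{\Sigma_1}\Vert_{\mathcal{L}(V)} \leq C_1 e^{(\mu_1-\varepsilon)t}$ (valid for all $t \leq 0$, since $\mathcal{A}_U P_{\Sigma_1}$ is bounded with spectrum $\Sigma_1$) and with Lemma~\ref{lemma:F}, produces an integral inequality
\begin{displaymath}
g(t) \leq C_1 e^{(\mu_1-\varepsilon)t}\Vert v\Vert_V + k_\rho \int_t^0 e^{(\mu_1-\varepsilon)(t-s)} g(s)\, ds, \qquad t \leq 0,
\end{displaymath}
with $k_\rho = \mathcal{O}(\mathrm{Lip}_{V\to L^p_\sigma}(F_\rho)) \to 0$ as $\rho \to 0$. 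Substituting $h(t) = e^{-(\mu_1-\varepsilon)t} g(t)$ converts this into a reverse Gronwall inequality that integrates to $h(t) \leq C_1 \Vert v\Vert_V e^{-k_\rho t}$, whence $\Vert \varphi_t^\rho(v)\Vert_V \leq 2 C_1 \Vert v\Vert_V e^{(\mu_1 - \varepsilon - k_\rho) t}$.

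The main obstacle I expect is the rate bookkeeping in this last step: for the prescribed $\mu$ one needs $\mu_1 - \varepsilon - k_\rho \geq -\mu$, which is achievable because $-\mu < \mu_1$ leaves room to pick $\varepsilon$ small, and because shrinking $\rho$ (if necessary beyond what Theorem~\ref{thm:main} already requires) drives $k_\rho$ down without disturbing previously fixed constants. The same interplay between spectral gap and Lipschitz slack of the cut-off nonlinearity that powered the abstract construction is what makes this final estimate close.
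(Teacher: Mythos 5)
Your proof is correct and follows precisely the strategy the paper intends — the paper itself disposes of this lemma with ``an argument analogous to the proof of the preceding lemma applies,'' and you carry out that analogy faithfully: identifying the first two sets with $W^{\Sigma_1}_\rho$ via Theorem~\ref{thm:main}(ii) (where the preceding lemma used the $\psi_\rho$ characterization from part (iii)), using the Lipschitz bound on $\phi_\rho$ from \eqref{eq:lipschitz_assumption} to control the full norm by the $P_{\Sigma_1}$-projection (mirroring the use of $\mathrm{Lip}(\psi_\rho) < 1$), and closing with a variation-of-constants/Gronwall estimate on the $P_{\Sigma_1}$-part for $t \le 0$. The one simplification you get for free, and correctly exploit, is that $P_{\Sigma_1}V$ is finite dimensional so $\mathcal{A}_U P_{\Sigma_1}$ is bounded and $P_{\Sigma_1}: L^p_\sigma \to V$ is bounded without fractional powers; hence your integral inequality has no $(t-s)^{-\alpha}$ singularity, and a clean backward Gronwall suffices where the paper's Lemma~\ref{lemma:mfd_charact} had to work a little harder with the convolution kernel. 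The ``rate bookkeeping'' caveat you flag at the end — that the exponent $\mu_1 - \varepsilon - k_\rho$ must dominate $-\mu$, requiring $\rho$ possibly $\mu$-dependent — is already present in the same form in the paper's own Lemma~\ref{lemma:mfd_charact} (where $k_\rho$ diverges as $\mu \to \mu_2^+$), so you have not introduced a new gap; you have simply inherited, and honestly reported, the paper's own ``$\rho$ sufficiently small'' bookkeeping.
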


\begin{proof}
An argument analogous to the proof of the preceding lemma applies.
\end{proof}

We now turn to outline two special cases
in which the spectral submanifolds $W^{\Sigma_1}_{\mathrm{loc},r}$  and foliations $M^{\Sigma_2}_{\mathrm{loc},r}$ constructed for the full Navier-Stokes system  $\varphi_t$ (from Corollary~\ref{cor:Wloc}) are unique.
In particular, the following lemma shows that the
foliation for $\varphi_t^\rho$, generated by $\psi_\rho$, yields a unique foliation for $\varphi_t$ over forward orbits that remain in a neighborhood of the origin, provided that $\Sigma_2$ is contained in the left half-plane of $\mathbb{C}$ (equivalently if $\beta < 0$).
In this case, the leaf of the foliation passing through the origin, $M^{\Sigma_2}_{\mathrm{loc},r}(0)$, represents what is commonly termed a strong-stable manifold.

\begin{lemma} 
\label{lemma:Mu_loc}
Let $\varphi_t$ denote the semiflow generated by the Navier-Stokes equations on $V$, and let $\mathcal{A}_U$, $\mu_2, \mu_1$ be as above.
Suppose the forward orbit of $u \in V$ remains in a sufficiently small neighborhood of the origin, i.e.\ $\mathcal{O}^+(u) \subset B_{r_2}(0)$, with $r_2>0$ to be determined in the course of the proof.
Assume further that $\mu_2 < 0$.
Then, for $r>0$ small enough, 
the family of submanifolds constructed in the last part of Corollary~\ref{cor:Wloc} are uniquely determined over $\mathcal{O}^+(u)$.
More precisely,
there exist unique $C^1$ submanifolds embedded in $V$ and parameterized by $u \in V$ of codimension $\mathrm{dim} (P_{\Sigma_1}V)$, 
\begin{align*}
    M_{\mathrm{loc},r}^{\Sigma_2}(u)     &= \left\{ \, v \in \widehat{B}_r(u) \, \Big\vert \, \limsup_{t \to \infty} \frac{1}{t} \ln{ \Vert \varphi_t (u ) - \varphi_t (v )\Vert_V} \leq \mu, \text{ for some } \mu \in [\mu_2,\mu_1) \, \right\} \\
    &= \left\{ \, v \in \widehat{B}_r(u) \, \Big\vert \, \limsup_{t \to \infty} \frac{1}{t} \ln{ \Vert \varphi_t (u ) - \varphi_t (v )\Vert_V} \leq \mu, \text{ for all } \mu \in [\mu_2,\mu_1) \, \right\} \\
    &= \left\{ \, v \in \widehat{B}_r(u) \, \Big\vert \, \Vert \varphi_t (u ) - \varphi_t (v ) \Vert_V \leq C e^{\mu t} \Vert u-v \Vert_V \text{ for all } t \geq 0 \text{ and } \mu \in (\mu_2,\mu_1) \, \right\},
\end{align*}
that form a local foliation over $\mathcal{O}^+(u)$.
In particular, if $r>0$ is chosen as above, then
\begin{displaymath}
   \varphi_t ( M_{\mathrm{loc},r}^{\Sigma_2}(u) ) \cap \widehat{B}_r(0) \subset M_{\mathrm{loc},r}^{\Sigma_2}(\varphi_t(u)), \qquad \text{for all } t \geq 0.
\end{displaymath}
\end{lemma}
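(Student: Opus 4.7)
The plan is to reduce the statement to Lemma~\ref{lemma:mfd_charact} by exploiting that the three candidate characterizations are intrinsic to $\varphi_t$ and do not reference the cutoff. The crux is to choose $r, r_2 > 0$ small enough that forward orbits of points in a putative local leaf cannot leave the region $B_{\rho/2}(0)$ where $\varphi_t \equiv \varphi_t^\rho$. Once this is secured, the three sets, defined in terms of $\varphi_t$, coincide with their analogues for $\varphi_t^\rho$, and Lemma~\ref{lemma:mfd_charact} supplies both the mutual equivalence and their identification with the trace on $\widehat{B}_r(u)$ of the global leaf $M_\rho^{\Sigma_2}(u)$ from Theorem~\ref{thm:main}.

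First I would fix a cutoff parameter $\rho>0$ realising the assumptions of Theorem~\ref{thm:main} and Lemma~\ref{lemma:mfd_charact}, and pick $r>0$ so that $\widehat{B}_{2r}(0) \subset B_{\rho/2}(0)$. The quantitative step uses the third characterization of Lemma~\ref{lemma:mfd_charact}: for $v \in M_\rho^{\Sigma_2}(u) \cap \widehat{B}_r(u)$ and any $\mu \in (\mu_2, 0)$, available precisely because $\mu_2<0$,
\begin{displaymath}
\Vert \varphi_t^\rho(v) - \varphi_t^\rho(u) \Vert_V \leq C e^{\mu t} \Vert v-u \Vert_V \leq 2Cr, \qquad t \geq 0.
\end{displaymath}
Taking $r_2$ so that $r_2 + 2Cr < r$, any $u$ with $\mathcal{O}^+(u) \subset B_{r_2}(0)$ then satisfies $\varphi_t^\rho(v) \in \widehat{B}_r(0) \subset B_{\rho/2}(0)$ for all $t \geq 0$, which by uniqueness of solutions to \eqref{eq:main} forces $\varphi_t^\rho(v) = \varphi_t(v)$ along the entire forward orbit. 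Because the same contraction estimate, now based at $\varphi_t(u)$, holds for the leaves $M_\rho^{\Sigma_2}(\varphi_t(u))$, this inclusion persists uniformly in $t$, allowing one to invoke the argument with $u$ replaced by any point of $\mathcal{O}^+(u)$.

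With this agreement in place, the three Lyapunov-type characterizations evaluated along $\varphi_t$ coincide with those evaluated along $\varphi_t^\rho$, hence with $M_\rho^{\Sigma_2}(u) \cap \widehat{B}_r(u)$ by Lemma~\ref{lemma:mfd_charact}. This simultaneously yields the equivalence of the three sets and their identification with $M_{\mathrm{loc},r}^{\Sigma_2}(u)$ from Corollary~\ref{cor:Wloc}. Uniqueness is then automatic since the characterizing sets are cutoff-free, and the local invariance $\varphi_t(M_{\mathrm{loc},r}^{\Sigma_2}(u)) \cap \widehat{B}_r(0) \subset M_{\mathrm{loc},r}^{\Sigma_2}(\varphi_t(u))$ is inherited from \eqref{eq:fibrepreserving} together with $\varphi \equiv \varphi^\rho$ on $\widehat{B}_r(0)$. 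The main obstacle I anticipate is bookkeeping rather than conceptual: one must control leaf sizes uniformly along the entire orbit $\mathcal{O}^+(u)$, since $M_{\mathrm{loc},r}^{\Sigma_2}(\varphi_t(u))$ is centred at a moving base point. The hypothesis $\mu_2<0$ is precisely what permits this, since it allows the exponent $\mu$ in the contraction estimate to be chosen negative, preventing leaves from accumulating extra size as $t$ advances.
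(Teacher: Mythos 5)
Your overall strategy matches the paper's: use the third (uniform-decay) characterization from Lemma~\ref{lemma:mfd_charact}, combined with $\mu<0$, to trap the forward orbit of any leaf point in the agreement region $B_{\rho/2}(0)$ where $\varphi_t \equiv \varphi_t^\rho$, and then transfer the equivalences of Lemma~\ref{lemma:mfd_charact} back to the full semiflow. The insight that the characterizing sets are cutoff-free, and hence deliver uniqueness, is also the paper's route.

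There is, however, an arithmetic error that needs fixing. You impose the constraint $r_2 + 2Cr < r$, but this is never satisfiable: the constant $C$ in the third characterization necessarily obeys $C \geq 1$ (evaluate the inequality $\Vert \varphi_t^\rho(u)-\varphi_t^\rho(v)\Vert_V \leq Ce^{\mu t}\Vert u-v\Vert_V$ at $t=0$), so $2Cr \geq 2r > r$ for any $r_2>0$. You do not actually need the orbit of $v$ to remain in $\widehat{B}_r(0)$ — only in $B_{\rho/2}(0)$, which is the region where the two semiflows agree. The feasible version of the constraint is $r_2 + 2Cr < \rho/2$; the paper achieves exactly this by requiring $Cr < \rho/4 - r_2/2$ with $r_2 \in (0,\rho/2)$. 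With this correction the rest of your argument goes through.

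One further, smaller, omission: the statement asserts that the $M_{\mathrm{loc},r}^{\Sigma_2}(u)$ are embedded $C^1$ submanifolds of codimension $\dim(P_{\Sigma_1}V)$. Your write-up identifies the sets but does not produce the embedding. The paper does this explicitly by writing each leaf as the image of the $C^1$ map $i_u^\rho(v_2) = (\psi_\rho(u,v_2),v_2)$ on $B^2_r(u)$, using the Lipschitz bound \eqref{eq:lipschitz_assumption} to show $i_u^\rho(B^2_r(u)) = \widehat{B}_r(u) \cap M_\rho^{\Sigma_2}(u)$. You should include this step to complete the proof that the leaves are genuine $C^1$ embedded submanifolds of the claimed codimension.
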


\begin{proof}
Pick $\rho>0$ and a cutoff function $\chi_\rho$ such that the conditions of both Theorem~\ref{thm:main} and Lemma~\ref{lemma:mfd_charact} are met.
Rewriting the invariant foliation of points \ref{mainthm:stat3} and \ref{mainthm:stat4}, $\psi_\rho : V \times P_{\Sigma_2} V \to P_{\Sigma_1}V$, we have a $C^1$ embedding $i_u^\rho: P_{\Sigma_2}V \to V$ for each $u \in V$ given by 
\begin{displaymath}
   i_u^\rho(v_2) = (\psi_\rho(u,v_2),v_2),
\end{displaymath}
so that $\mathrm{im}(i_u^\rho) = M_\rho^{\Sigma_2}(u)$.
By \eqref{eq:lipschitz_assumption}, we have that $\mathrm{Lip}_{P_{\Sigma_2}V \to P_{\Sigma_1}V}(\psi_\rho(u,\cdot)) < 1$, and thus
\begin{displaymath}
   i_u^\rho \big({B}^2_r(u) \big) = \widehat{B}_r(u) \cap M_\rho^{\Sigma_2}(u),
\end{displaymath}
where we have also used that $\psi_\rho(u,P_{\Sigma_2}u) = P_{\Sigma_1}u$.

We claim that 
\begin{equation}
  M_{\mathrm{loc},r}^{\Sigma_2}(u) = \widehat{B}_r(u) \cap M_\rho^{\Sigma_2}(u),
  \label{eq:MulocBR}
\end{equation}
for $r>0$ small enough.
Indeed, by the third characterization of $M_\rho^{\Sigma_2}(u)$ we have that
\begin{displaymath}
   \Vert \varphi_t^\rho (u ) - \varphi_t^\rho (v ) \Vert_V \leq C e^{\mu t} \Vert u-v \Vert_V
\end{displaymath}
for $v \in M_\rho^{\Sigma_2}(u)$, for some $\mu_2 < \mu <0$.
Take $r_2 \in (0,\rho/2)$ -- we assume $\mathcal{O}^+(u) \subset B_{r_2}(0)$.
If 
$r>0$ is chosen such that $Cr < \rho/4-r_2/2$, then
\begin{displaymath}
   \left\Vert  \varphi_t^\rho (v ) \right\Vert_V 
   \leq \left\Vert \varphi_t^\rho (u) \right\Vert_V +  C e^{\mu t} \Vert u-v \Vert_V 
   \leq  \left\Vert \varphi_t^\rho (u) \right\Vert_V + 2 C e^{\mu t} \Vert u-v \Vert_b,
\end{displaymath}
and thus, if  $v \in \widehat{B}_r(u) \cap M_\rho^{\Sigma_2}(u)$, then $\mathcal{O}_\rho^+(v) \subset B_{\rho/2}(0)$.
Consequently, since $\varphi^\rho \equiv \varphi$ on $B_{\rho/2}(0)$, $\mathcal{O}^+(v) \subset B_{\rho/2}(0)$ and the
characterizations in Lemma~\ref{lemma:mfd_charact} continue to hold for the original semiflow $\varphi_t$ locally.
This implies that
\begin{displaymath}
   M_{\mathrm{loc},r}^{\Sigma_2}(u)  \supset \widehat{B}_r(u) \cap M_\rho^{\Sigma_2}(u);
\end{displaymath}
the converse is obvious, using the third characterization of $M_{\mathrm{loc},r}^{\Sigma_2}(u) $ and that $\varphi \equiv \varphi^\rho$ on $B_{\rho/2}(0)$ once again.
Thus \eqref{eq:MulocBR} follows, and hence
\begin{displaymath}
   M_{\mathrm{loc},r}^{\Sigma_2}(u)  = i_u^\rho \big({B}^2_r(u) \big),
\end{displaymath}
which shows that $M_{\mathrm{loc},r}^{\Sigma_2}(u) $ is an embedded $C^1$ submanifold of $V$ diffeomorphic to the ball ${B}^2_r(u)$.
\end{proof}

The counterpart of Lemma~\ref{lemma:Mu_loc} for the main manifold of interest shows that $W^{\Sigma_1}_{\mathrm{loc},r}$ 
is uniquely determined whenever $\Sigma_1$ is contained in the right half-plane of $\mathbb{C}$ (or $\beta >0$).
In this case, the constructed submanifold is customarily referred to as a strong-unstable manifold, since it comprises trajectories that 
depart from the origin at the fastest rates.

\begin{lemma}
\label{lemma:uniq_specialcases}
Let $\varphi_t$ denote the semiflow generated by the Navier-Stokes equations on $V$, and let $\mathcal{A}_U$, $\mu_1$ be as above.
Assume, additionally, that $\mu_1 > 0$.
Then, for $r>0$ small enough, the $C^1$ spectral submanifold constructed in Corollary~\ref{cor:Wloc} can be alternatively characterized as
\begin{align*}
    W^{\Sigma_1}_{\mathrm{loc},r}     &= \left\{ \, v \in \widehat{B}_r(0) \, \Big\vert \, \limsup_{t \to -\infty} \frac{1}{|t|} \ln{ \Vert \varphi_t(v ) \Vert_V} \leq \mu, \text{ for some } \mu \in [-\mu_1,-\mu_2) \, \right\} \\
    &=  \left\{ \, v \in \widehat{B}_r(0) \, \Big\vert \, \limsup_{t \to -\infty} \frac{1}{|t|} \ln{ \Vert \varphi_t (v ) \Vert_V} \leq \mu, \text{ for all } \mu \in [-\mu_1,-\mu_2) \, \right\} \\
    &= \left\{ \, v \in \widehat{B}_r(0) \, \Big\vert \, \Vert \varphi_t (v ) \Vert_V \leq C e^{-\mu t} \Vert v \Vert_V \text{ for all } t \leq 0 \text{ and } \mu \in (-\mu_1,-\mu_2) \, \right\}.
\end{align*}
In particular, it is unique and negatively invariant in the following sense:
If $u \in W^{\Sigma_1}_{\mathrm{loc},r}$, there exists $T<0$ such that $\varphi_t(u) \in W^{\Sigma_1}_{\mathrm{loc},r}$ for all $t \leq T$.
\end{lemma}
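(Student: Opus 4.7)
The plan is to mirror the argument of Lemma~\ref{lemma:Mu_loc}, but with the roles of forward and backward time reversed, using Lemma~\ref{lemma:mfd1_charact} (the backward-time characterization of $W^{\Sigma_1}_\rho$) in place of Lemma~\ref{lemma:mfd_charact}. The key structural fact is that the assumption $\mu_1>0$ forces backward orbits on $W^{\Sigma_1}_\rho$ that start near the origin to shrink exponentially as $t\to-\infty$, which in turn keeps them inside the region $B_{\rho/2}(0)$ where $\varphi_t\equiv\varphi_t^\rho$. This lets us promote the three characterizations of $W^{\Sigma_1}_\rho$ from Lemma~\ref{lemma:mfd1_charact} into local characterizations for the uncut semiflow $\varphi_t$, which in particular implies uniqueness, since the characterizing sets depend only on $\varphi_t$ and not on the cutoff.

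Concretely, I would first pick $\rho>0$ and a cutoff function $\chi_\rho$ so that both Theorem~\ref{thm:main} and Lemma~\ref{lemma:mfd1_charact} apply, and set $W^{\Sigma_1}_{\mathrm{loc},r}:=W^{\Sigma_1}_\rho\cap\widehat{B}_r(0)$ as in \eqref{eq:Wlocdef}. Fix $\mu\in(-\mu_1,-\mu_2)$ with $\mu<0$ (possible since $-\mu_1<0$). For $v\in W^{\Sigma_1}_{\mathrm{loc},r}$, the third characterization in Lemma~\ref{lemma:mfd1_charact} gives $\|\varphi_t^\rho(v)\|_V\leq Ce^{-\mu t}\|v\|_V$ for all $t\leq 0$. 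Choosing $r>0$ small enough so that $2Cr<\rho/2$, we obtain $\|\varphi_t^\rho(v)\|_V<\rho/2$ for every $t\leq 0$, hence the entire negative semiorbit $\mathcal{O}^-_\rho(v)$ lies in $B_{\rho/2}(0)$. Since $\varphi\equiv\varphi^\rho$ on $B_{\rho/2}(0)$ and, as recorded in the footnote preceding Theorem~\ref{thm:main}, the full Navier--Stokes negative semiorbit from $v$ is uniquely determined when it exists, I would conclude by a standard continuation argument that $\varphi_t(v)$ exists for all $t\leq 0$ and coincides with $\varphi_t^\rho(v)$.

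With the identification $\varphi_t(v)=\varphi_t^\rho(v)$ for $t\leq 0$ in hand, the forward inclusion of $W^{\Sigma_1}_{\mathrm{loc},r}$ into each of the three candidate sets is immediate from Lemma~\ref{lemma:mfd1_charact}. For the reverse inclusion, take $v\in\widehat{B}_r(0)$ satisfying the third (strongest) property for $\varphi_t$: by exponential decay, $\varphi_t(v)$ stays inside $B_{\rho/2}(0)$ for all $t\leq 0$ (possibly after shrinking $r$ again so $Cr<\rho/4$), where $\varphi\equiv\varphi^\rho$, so the same estimate holds for $\varphi^\rho_t(v)$; Lemma~\ref{lemma:mfd1_charact} then places $v$ in $W^{\Sigma_1}_\rho$, and since $v\in\widehat{B}_r(0)$ by assumption, $v\in W^{\Sigma_1}_{\mathrm{loc},r}$. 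A standard chain of implications (the first set contains the second, which contains the third) closes the loop and shows all three sets agree.

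Uniqueness follows at once, because the three characterizing sets are defined intrinsically through $\varphi_t$ and make no reference to $\chi_\rho$. For negative invariance, fix $u\in W^{\Sigma_1}_{\mathrm{loc},r}$ and recall $\|\varphi_t(u)\|_V\leq Ce^{-\mu t}\|u\|_V$ for $t\leq 0$ with $\mu<0$; hence there exists $T<0$ such that $\varphi_t(u)\in\widehat{B}_r(0)$ for all $t\leq T$. Because $\mathcal{O}^-_\rho(u)\subset W^{\Sigma_1}_\rho$ by Theorem~\ref{thm:main}(i) and $\varphi_t(u)=\varphi_t^\rho(u)$ for $t\leq 0$, we conclude $\varphi_t(u)\in W^{\Sigma_1}_\rho\cap\widehat{B}_r(0)=W^{\Sigma_1}_{\mathrm{loc},r}$ for all $t\leq T$, as desired. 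The only nontrivial point in the plan is the bootstrapping step that upgrades existence of $\varphi_t(v)$ for $t\leq 0$ from the cutoff flow to the original flow; everything else is direct translation of Lemma~\ref{lemma:mfd1_charact} through the local identity $\varphi\equiv\varphi^\rho$.
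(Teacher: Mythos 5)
Your proposal is correct and follows exactly the approach the paper intends: the paper's proof of this lemma is literally ``the proof is completely analogous to that of the preceding lemma and is therefore safely omitted,'' and your argument carries out that analogy faithfully, substituting Lemma~\ref{lemma:mfd1_charact} for Lemma~\ref{lemma:mfd_charact} and using $\mu_1>0$ (so that one may pick $\mu<0$ in $(-\mu_1,-\mu_2)$) to trap backward orbits inside $B_{\rho/2}(0)$ where $\varphi\equiv\varphi^\rho$. The bootstrapping step you flag — promoting the cutoff backward orbit to a genuine Navier--Stokes backward orbit — is exactly handled by the footnote on uniqueness of $\mathcal{O}^-(u)$ via time-analyticity, so no gap remains.
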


\begin{proof}
Once again, the proof is completely analogous to that of the preceding lemma and is therefore safely omitted.
\end{proof}

Outside the realm of these special cases -
despite not being able to
guarantee uniqueness -
the local behaviour of the system along different submanifolds remains unique, in some weakened sense.
In particular, we show 
that any two local submanifolds satisfying Definition~\ref{def:wloc} 
have locally conjugate reduced dynamics,
in line with the work of Burchard et al.~\cite{burchard1992} for center manifolds.

\begin{lemma}
\label{lemma:conj}
Let $\varphi_t$ denote the semiflow generated by the Navier-Stokes equations on $V$. 
Suppose that $W^{\Sigma_1}_{\mathrm{loc},r}$ is non-unique, and denote by $W^{\Sigma_1,a}_{\mathrm{loc},r}$ and $W^{\Sigma_1,b}_{\mathrm{loc},r}$ any two such manifolds. 
Then,
there is a neighborhood $O \subset V$ of the origin and a homeomorphism
$h:W^{\Sigma_1,a}_{\mathrm{loc},r} \cap O \to W^{\Sigma_1,b}_{\mathrm{loc},r} \cap O$
satisfying
\begin{equation}
    \varphi_t \circ h (u) = h \circ \varphi_t (u)
    \label{eq:conjugacy}
\end{equation}
for all $u \in W^{\Sigma_1,a}_{\mathrm{loc},r} \cap O$ and all $t$ satisfying $\varphi_t (u) \in  W^{\Sigma_1,a}_{\mathrm{loc},r} \cap O$.
\end{lemma}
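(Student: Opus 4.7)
The plan is to construct $h$ by projecting along leaves of a single invariant foliation that is transverse to both manifolds, then to extract the conjugacy from the leaf-preservation property of Theorem~\ref{thm:main}\ref{mainthm:stat3}.

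First, I would fix a single cutoff $\chi_\rho$ with $\rho>0$ small enough that Theorem~\ref{thm:main} applies and \eqref{eq:lipschitz_assumption} holds, and choose $r<\rho/2$ so that $\varphi\equiv\varphi^\rho$ on $\widehat{B}_r(0)$. Let $\{M^{\Sigma_2}_\rho(w)\}_{w\in W^{\Sigma_1}_\rho}$ be the associated foliation with projection $\pi_\rho:V\to W^{\Sigma_1}_\rho$ from \eqref{eq:pi_rho}. Definition~\ref{def:wloc}\ref{def:wloc_2} ensures each manifold is tangent to $P_{\Sigma_1}V$ at $0$, so (shrinking $r$ if needed) each admits a local graph representation
\begin{equation*}
    W^{\Sigma_1,\sharp}_{\mathrm{loc},r}\cap\widehat{B}_r(0)=\{v_1+\phi^\sharp(v_1)\,:\,v_1\in U^\sharp\subset P_{\Sigma_1}V\},\quad\sharp\in\{a,b\},
\end{equation*}
with $\phi^\sharp(0)=0$ and $D\phi^\sharp(0)=0$.

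For $u\in W^{\Sigma_1,a}_{\mathrm{loc},r}$ near the origin, I would define $h(u)$ as the unique element of $M^{\Sigma_2}_\rho(u)\cap W^{\Sigma_1,b}_{\mathrm{loc},r}$ lying in a fixed small ball about $0$. Using $M^{\Sigma_2}_\rho(u)=\{\psi_\rho(u,v_2)+v_2\,:\,v_2\in P_{\Sigma_2}V\}$, the intersection reduces to solving $v_1=\psi_\rho(u,\phi^b(v_1))$. Since $\mathrm{Lip}(\psi_\rho(u,\cdot))<1$ uniformly in $u$ by \eqref{eq:lipschitz_assumption} and $\mathrm{Lip}(\phi^b\,|\,\overline{B^1_{r'}(0)})$ can be made arbitrarily small because $D\phi^b(0)=0$, the right-hand side is a uniform contraction, and the parametric Banach fixed-point theorem (together with the uniform Lipschitz dependence of $\psi_\rho$ on its first argument from Theorem~\ref{thm:main}\ref{mainthm:stat3}) yields a unique continuous fixed point $v_1=\xi(u)$. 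Setting $h(u):=\xi(u)+\phi^b(\xi(u))$ produces a continuous map; swapping the roles of $a$ and $b$ gives a continuous inverse. Hence, for an appropriate open neighborhood $O\subset V$ of $0$, $h:W^{\Sigma_1,a}_{\mathrm{loc},r}\cap O\to W^{\Sigma_1,b}_{\mathrm{loc},r}\cap O$ is a homeomorphism, and by construction $\pi_\rho(h(u))=\pi_\rho(u)$, i.e.\ $u$ and $h(u)$ lie on a common leaf.

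For the conjugacy, I would fix $u\in W^{\Sigma_1,a}_{\mathrm{loc},r}\cap O$ and $t\ge 0$ with $\varphi_t(u)\in W^{\Sigma_1,a}_{\mathrm{loc},r}\cap O$; using the leaf-wise Lipschitz bound $\Vert\varphi_s^\rho(u)-\varphi_s^\rho(h(u))\Vert_V\le Ce^{\mu s}\Vert u-h(u)\Vert_V$ from Lemma~\ref{lemma:mfd_charact}, one arranges (after further shrinking $O$) that $\varphi_s(h(u))\in\widehat{B}_r(0)$ for all $s\in[0,t]$, so $\varphi_s\equiv\varphi_s^\rho$ on both orbits. Local invariance of $W^{\Sigma_1,b}_{\mathrm{loc},r}$ then forces $\varphi_t(h(u))\in W^{\Sigma_1,b}_{\mathrm{loc},r}$, while \eqref{eq:fibrepreserving} applied to $\varphi_t^\rho$ via diagram \eqref{eq:fibrepreserving2} gives $\pi_\rho(\varphi_t(h(u)))=\pi_\rho(\varphi_t(u))$. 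Thus $\varphi_t(h(u))$ lies on the same leaf as $\varphi_t(u)$ and belongs to $W^{\Sigma_1,b}_{\mathrm{loc},r}$; the uniqueness of the intersection established above forces $\varphi_t(h(u))=h(\varphi_t(u))$.

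The principal obstacle is the uniformity-in-$t$ issue in the conjugacy step: $h$ is constructed once on a fixed neighborhood $O$, whereas the hypothesis permits arbitrarily large $t$, during which the orbit of $h(u)$ could in principle exit the region where $\varphi=\varphi^\rho$ even while $\varphi_t(u)$ has returned to $O$. This is resolved by exploiting the leaf-wise Lipschitz estimate from Lemma~\ref{lemma:mfd_charact}, which controls the drift between $\varphi_s(u)$ and $\varphi_s(h(u))$ on any orbit segment satisfying the hypothesis; the (potentially positive) sign of the relevant exponent $\mu$ only affects how small $O$ must be chosen, not the conclusion.
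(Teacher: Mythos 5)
Your construction is essentially the paper's argument in only slightly different clothing: both proofs project along the leaves of a common invariant foliation and obtain $h$ as the fixed point of a contraction of the form $v_1 \mapsto \psi(v, \phi^b(v_1))$, and both derive the conjugacy from local invariance of the manifolds combined with the fibre-preserving property of $\varphi_t$ along the foliation (as in \eqref{eq:fibrepreserving2}). The paper phrases this via intermediate projections $\pi_a, \pi_b$ (analogous to \eqref{eq:pi_rho}) and defines $h$ by composing/restricting them, while you build $h$ and $h^{-1}$ directly; these are the same idea.

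Your final paragraph, however, contains an incorrect claim. You assert that ``the (potentially positive) sign of the relevant exponent $\mu$ only affects how small $O$ must be chosen, not the conclusion.'' This is not true: if $\mu > 0$, the leaf-wise bound from Lemma~\ref{lemma:mfd_charact},
\[
\Vert \varphi_s^\rho(u) - \varphi_s^\rho(h(u)) \Vert_V \leq C e^{\mu s}\Vert u - h(u)\Vert_V,
\]
grows without bound in $s$, so no $t$-independent shrinking of $O$ would keep $\varphi_s(h(u))$ inside the region where $\varphi \equiv \varphi^\rho$, and the conjugacy step would break for large $t$. The correct observation --- which you should invoke instead --- is that the very hypothesis of Lemma~\ref{lemma:conj} rescues you: non-uniqueness of $W^{\Sigma_1}_{\mathrm{loc},r}$ forces $\mu_1 \leq 0$, since $\mu_1 > 0$ would make $W^{\Sigma_1}_{\mathrm{loc},r}$ unique by Lemma~\ref{lemma:uniq_specialcases}. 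Hence $\mu_2 < \mu_1 \leq 0$ and one may always pick $\mu < 0$ in Lemma~\ref{lemma:mfd_charact}, making the leaf-wise estimate uniform in time and allowing a single choice of $O$ to close the argument. With that correction, your proof is sound and matches the paper's.

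Remaining (minor) point worth tightening: in the conjugacy step you also need $\varphi_t(h(u))$ to land back in the neighborhood where the intersection $M^{\Sigma_2}_\rho(\cdot) \cap W^{\Sigma_1,b}_{\mathrm{loc},r}$ is uniquely determined, so that the final identification $\varphi_t(h(u)) = h(\varphi_t(u))$ is forced; with $\mu < 0$ this follows from the same uniform-in-time estimate, but it should be stated.
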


\begin{proof}
First note that, by point~\ref{def:wloc_2} of Definition~\ref{def:wloc}, we may choose $r_1>0$ small enough such that both $W^{\Sigma_1,a}_{\mathrm{loc},r} \cap \widehat{B}_{r_1}(0)$ and $W^{\Sigma_1,b}_{\mathrm{loc},r} \cap \widehat{B}_{r_1}(0)$
may be represented as graphs of functions $\phi_a,\phi_b:P_{\Sigma_1}V \to P_{\Sigma_2}V$ with Lipschitz constants less than one.

Using Corollary~\ref{cor:Wloc}, there exists a neighborhood $\widehat{B}_{r_0}(0) \subset \widehat{B}_{r_1}(0)$ for which a local foliation exists, in the form of a map ${\psi}:\widehat{B}_{r_0}(0) \times B^2_{r_0} (0) \to B^1_{\tilde{r}_0}(0)$, for some $\tilde{r}_0>0$,
with $\mathrm{Lip}_{P_{\Sigma_2}V \to P_{\Sigma_1}V}({\psi}(v,\cdot))<1$ for each $v \in \widehat{B}_{r_0}(0)$.
By further restricting the domain of its first entry, we can ensure that
\begin{equation}
\psi(v,\phi_a (B^1_{r_0}(0))) \subset B^1_{r_0}(0), \qquad \text{and} \qquad \psi(v,\phi_b (B^1_{r_0}(0))) \subset B^1_{r_0}(0)
\label{eq:psiphiab}
\end{equation}
are both satisfied for each $v \in \widehat{B}_{r_2}(0) \subset \widehat{B}_{r_0}(0)$,
so that the contraction maps given by
\begin{equation}
v_1 \mapsto \psi(v,\phi_a (v_1)) , \qquad \text{and} \qquad v_1 \mapsto \psi(v,\phi_b (v_1))
\label{eq:psiphiab2}
\end{equation}
have unique fixed points in $B^1_{r_0}(0) \subset P_{\Sigma_1}V$ that depend continuously on $v$.
Thus, we may proceed as in the case of the cut-off system and define continuous projections $\pi_a:\widehat{B}_{r_2}(0) \to W^{\Sigma_1,a}_{\mathrm{loc},r} \cap \widehat{B}_{r_0}(0)$ and $\pi_b:\widehat{B}_{r_2}(0) \to W^{\Sigma_1,b}_{\mathrm{loc},r} \cap \widehat{B}_{r_0}(0)$ analogously to \eqref{eq:pi_rho}.

The above, \eqref{eq:psiphiab} in particular, also implies that any leaf of the local foliation that passes through $\widehat{B}_{r_2}(0)$ will have exactly one intersection point with both $W^{\Sigma_1,b}_{\mathrm{loc},r}$ and $W^{\Sigma_1,b}_{\mathrm{loc},r}$
within $\widehat{B}_{r_0}(0)$.

Our candidates for the conjugacy are $f_a:= \left. \pi_a \right\vert_{\widehat{B}_{r_2}(0) \cap W^{\Sigma_1,b}_{\mathrm{loc},r}}$
and $f_b:= \left. \pi_b \right\vert_{\widehat{B}_{r_2}(0) \cap W^{\Sigma_1,a}_{\mathrm{loc},r}}$
-- their domains, however, need to be adjusted.
We claim that
\begin{displaymath}
   h:= \left. f_a \right\vert_{\mathrm{im}(f_b) \cap \widehat{B}_{r_2}(0)}
\end{displaymath}
gives the desired homeomorphism, with inverse given by $\left. f_b \right\vert_{\mathrm{im}(f_a) \cap \widehat{B}_{r_2}(0)}$.

Injectivity of $h:\mathrm{im}(f_b) \cap \widehat{B}_{r_2}(0) \to \mathrm{im}(f_a) \cap \widehat{B}_{r_2}(0)$ follows from the fact that $\pi_a^{-1}(u) \cap (  \widehat{B}_{r_2}(0) \cap W^{\Sigma_1,b}_{\mathrm{loc},r})$ is a single point for each $u \in \mathrm{im}(f_a) \cap \widehat{B}_{r_2}(0)$, using \eqref{eq:psiphiab2}.
To show surjectivity, take $v \in \mathrm{im}(f_a) \cap \widehat{B}_{r_2}(0)$.
Then, there exists $u \in \widehat{B}_{r_2}(0) \cap W^{\Sigma_1,b}_{\mathrm{loc},r}$ such that $f_a(u) = v$, and thus we need only show that $u \in \mathrm{im}(f_b)$ --
but this is clear since both $u$ and $v$ are on the same leaf within $\widehat{B}_{r_2}(0)$, and hence \eqref{eq:psiphiab2} implies that $f_b(v) = u$.
Therefore, $h$ is indeed a homeomorphism.

The neighborhood $O$ may be chosen arbitrarily subject to the constraint that it intersects the manifolds $W^{\Sigma_1,a}_{\mathrm{loc},r}$ and $W^{\Sigma_1,b}_{\mathrm{loc},r}$ precisely in the domains of $h$ and $h^{-1}$.
The conjugacy \eqref{eq:conjugacy} follows from the local invariance of the manifolds in question (cf.\ Corollary~\ref{cor:Wloc}), along with the foliation-preserving property of $\varphi_t$, similarly to \eqref{eq:fibrepreserving2}.
\end{proof}

The smoothness of the conjugacy map $h$ may be improved whenever $\psi_\rho$ is continuously Fréchet differentiable as a map $V \times P_{\Sigma_2}V \to P_{\Sigma_1}V$, upon imposing the implicit function theorem in place of the Banach fixed point theorem, wherever appropriate \cite{burchard1992}.

\begin{remark}[Smoothness]
\label{remark:smoothness}
In many cases, the smoothness of $W^{\Sigma_1}_{\mathrm{loc},r}$ and/or 
$M_{\mathrm{loc},r}^{\Sigma_2}(0)$ can be improved.
In the subcases where we could conclude uniqueness, $C^\infty$ smoothness of the corresponding manifold ensues \cite{henry1981}.
Problems for $W^{\Sigma_1}_{\mathrm{loc},r}$ (resp.\ $M_{\mathrm{loc},r}^{\Sigma_2}(0)$) arise whenever $\Sigma_1$ contains elements with negative real part (resp.\ $\Sigma_2$ crosses the imaginary axis), i.e., when $W^{\Sigma_1}_{\mathrm{loc},r}$ is the pseudo-unstable manifold (resp.\ $M_{\mathrm{loc},r}^{\Sigma_2}(0)$ is the pseudo-stable manifold).
In these instances, smoothness may be improved subject to spectral gap conditions, see, e.g., \cite{irwin1980new}. 
\end{remark}

\subsection{Globalization of invariant manifolds}

The standard procedure of constructing global invariant manifolds from local ones usually involves successive iterations of the time-one map $\varphi_1$.
For fluid flows on $\Omega \subset \mathbb{R}^3$, $\varphi_1$ might not even be defined throughout the local manifold, and hence, most of the following discussion concerns only the two-dimensional case.

The corresponding theory 
is fully worked out in \cite{henry1981} --
we summarize it here due to its importance from the perspective of model reduction.

For a local manifold $W_{\mathrm{loc}}$ that is negatively invariant, its global counterpart can be obtained as
\begin{equation}
   W = \mathcal{O}^+ \left( W_{\mathrm{loc}} \right) = \bigcup_{n \in \mathbb{N}} \varphi_n  \left( W_{\mathrm{loc}} \right).
   \label{eq:globMfd_fw}
\end{equation}
For $W$ to be an immersed submanifold of $V$, 
$\varphi_n$ needs to meet the requirements posed by the inverse function theorem (see, e.g., \cite{lang2001fundamentals}),
namely that $D\varphi_n(u)$ is injective and splits for all $u \in  W_{\mathrm{loc}}$.
Since $V$ is a Hilbert space, the latter condition is equivalent to $D\varphi_n(u)$ having a closed image -- satisfied if and only if $W_{\mathrm{loc}}$ is finite dimensional (using the closed image theorem and compactness of $D\varphi_n(u)$).
Injectivity of $\varphi_t$ and hence $D\varphi_n(u)$ is always given due to the time-analiticity of Navier-Stokes solutions (see, e.g., \cite{foias2001} or \cite{henry1981}).
Finally, it is clear from \eqref{eq:globMfd_fw} that the resulting manifold $W$ is  invariant under $\varphi_t$.

Conversely, if $W_{\mathrm{loc}}$ is positively invariant, the global version is given by
\begin{equation}
   W = \mathcal{O}^- \left( W_{\mathrm{loc}} \right) = \bigcup_{n \in \mathbb{N}} \varphi_{-n}  \left( W_{\mathrm{loc}} \right).
   \label{eq:globMfd_bw}
\end{equation}
To see that $W$ is immersed in $V$, we
appeal to the implicit function theorem, for which we need in addition that $\varphi_n$ is transversal over $W_{\mathrm{loc}}$ (in the sense of \cite{lang2001fundamentals}, p29).
This can be ensured if $W_{\mathrm{loc}}$ has finite codimension -- in this case, for each $u \in \varphi_{-n}  \left( W_{\mathrm{loc}} \right)$, the composite map 
\begin{displaymath}
   T_uV \xrightarrow{D \varphi_n(u)} T_{\varphi_n(u)}V \rightarrow T_{\varphi_n(u)}V \big/ T_{\varphi_n(u)} W_{\mathrm{loc}} 
\end{displaymath}
is surjective (since in general $D \varphi_n (u)$ has dense image, see \cite{henry1981}) and its kernel splits.

For instance, global strong stable and strong unstable manifolds may be produced upon
applying
\eqref{eq:globMfd_bw} and \eqref{eq:globMfd_fw} to
$M^{\Sigma_2}_{\mathrm{loc},r}(0)$ and $W^{\Sigma_1}_{\mathrm{loc},r}$
respectively.

\subsection{Summary of results}

In the following theorem, we summarize the results obtained concerning 
spectral submanifolds and foliations about fixed points.

\begin{theorem}[Summary]
\label{thm:summary}
Let $\varphi_t$ denote the semiflow generated by the Navier-Stokes system on $V$,
and let $\mathcal{A}_U$ be as above.
For any $\beta < \max \mathrm{Re} \, \sigma ( \mathcal{A}_U ),$  $\beta \notin \mathrm{Re} \, \sigma ( \mathcal{A}_U )$  
and for $r>0$ small enough,
there exist a spectral submanifold $W^{\beta}_{\mathrm{loc},r}$ and a complementing foliation $M^{\beta}_{\mathrm{loc},r}$ for $\widehat{B}_r(0)$, according to Definitions \ref{def:wloc} and \ref{def:foliation}.
The manifold $W^{\beta}_{\mathrm{loc},r}$
attracts trajectories within $\widehat{B}_r(0)$
at a rate of $\mathcal{O}(e^{\beta t})$ (in the sense of \eqref{eq:cor1_2}), synchronized along leaves of the foliation. 

Moreover, for $\beta>0$, $W^{\beta}_{\mathrm{loc},r}$ is unique and can be extended to a $C^\infty$ smooth global submanifold defined by the following property
\begin{displaymath}
     W^{\beta}=  \left\{ \, v \in V \, \Big\vert \, \limsup_{t \to -\infty} \frac{1}{|t|} \ln{ \Vert \varphi_t (v ) \Vert_V} \leq \beta \, \right\}.
\end{displaymath}
In all other cases,
smoothness (above $C^1$) is subject to further conditions (c.f.\ Remark~\ref{remark:smoothness}), and uniqueness is only guaranteed in a weakened sense:
 any two local spectral submanifolds have topologically conjugate reduced dynamics on some neighborhood of the origin (Lemma~\ref{lemma:conj}).

Similarly, for $\beta < 0$, the resulting foliation $M^{\beta}_{\mathrm{loc},r}$ is unique over trajectories that remain in a small enough neighborhood of the origin.
In particular, $M^{\beta}_{\mathrm{loc},r}(0)$ is unique, $C^\infty$ smooth and can be globalized to satisfy
\begin{displaymath}
     M^{\beta}(0) =  \left\{ \, v \in V \, \Big\vert \, \limsup_{t \to \infty} \frac{1}{t} \ln{ \Vert \varphi_t (v )\Vert_V} \leq \beta  \, \right\}.
\end{displaymath}
\end{theorem}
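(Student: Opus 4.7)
The plan is to assemble the statement from the individual results established earlier in Section~\ref{sect:manifolds}, since Theorem~\ref{thm:summary} is a compilation. The existence of $W^\beta_{\mathrm{loc},r}$ satisfying Definition~\ref{def:wloc}, and of a complementing $C^0 \times C^1$ foliation $M^\beta_{\mathrm{loc},r}$ satisfying Definition~\ref{def:foliation}, together with the attraction estimate $\Vert \varphi_t(u) - \varphi_t(v) \Vert_V \leq C e^{\beta t}$ along leaves, follow directly from Corollary~\ref{cor:Wloc}(i)--(ii) and its concluding remark on the foliation. The non-unique case is then addressed by invoking Lemma~\ref{lemma:conj}, which provides the claimed topological conjugacy of reduced dynamics between any two local spectral submanifolds on a common neighborhood of the origin.

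For the case $\beta > 0$, I would proceed as follows. By assumption $\mu_1 = \min \mathrm{Re}\,\Sigma_1 > 0$, so Lemma~\ref{lemma:uniq_specialcases} applies and yields uniqueness of $W^\beta_{\mathrm{loc},r}$ along with its three Lyapunov-exponent characterizations (in particular the backward-decay one carried by $W^\beta$). Smoothness is upgraded to $C^\infty$ via Remark~\ref{remark:smoothness}, since uniqueness removes the obstruction tied to the pseudo-unstable case. To globalize, I note that Theorem~\ref{thm:spect} guarantees $\mathrm{Re}\,\lambda_j \to -\infty$, so $\Sigma_1$ is a finite set and $W^\beta_{\mathrm{loc},r}$ is finite-dimensional. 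Lemma~\ref{lemma:uniq_specialcases} also gives negative (local) invariance, so formula \eqref{eq:globMfd_fw} applies: finite dimensionality ensures that $D\varphi_n(u)$ has closed, splitting image on tangent spaces, and injectivity of $D\varphi_n(u)$ follows from time-analyticity of Navier--Stokes solutions (as recalled in the globalization subsection). The global characterization via $\limsup_{t\to -\infty} |t|^{-1} \ln \Vert \varphi_t(v)\Vert_V \leq \beta$ is then inherited from the local one by using backward invariance to push each point in $W^\beta$ into $W^\beta_{\mathrm{loc},r}$ in finite negative time.

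For the case $\beta < 0$, the argument is dual. Here $\mu_2 = \max \mathrm{Re}\,\Sigma_2 < 0$, which is precisely the hypothesis of Lemma~\ref{lemma:Mu_loc}; that lemma yields uniqueness of $M^\beta_{\mathrm{loc},r}(u)$ over all forward orbits remaining in a sufficiently small neighborhood, and gives the three equivalent characterizations. Specializing to $u = 0$ (whose forward orbit is trivial), we obtain uniqueness of $M^\beta_{\mathrm{loc},r}(0)$, and Remark~\ref{remark:smoothness} upgrades the regularity to $C^\infty$ (the pseudo-stable obstruction being absent). For globalization, I would apply \eqref{eq:globMfd_bw}: since $\mathrm{dim}(P_{\Sigma_1} V) < \infty$, $M^\beta_{\mathrm{loc},r}(0)$ has finite codimension; it is positively invariant by Lemma~\ref{lemma:Mu_loc}; and the required transversality of $\varphi_n$ along $M^\beta_{\mathrm{loc},r}(0)$ follows from density of $\mathrm{im}\,D\varphi_n(u)$ together with the finite-codimension splitting, as discussed in the globalization subsection. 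The global Lyapunov characterization then follows because, for any $v \in M^\beta(0)$, some iterate $\varphi_N(v)$ lies in $M^\beta_{\mathrm{loc},r}(0)$, where the local characterization applies.

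The argument involves essentially no new estimates; the only care required is bookkeeping the spectral hypotheses so that the correct uniqueness lemma fires (Lemma~\ref{lemma:uniq_specialcases} vs.\ Lemma~\ref{lemma:Mu_loc}) and verifying the finite-dimensionality/finite-codimensionality conditions that allow the inverse and implicit function theorems to be used in the globalization step. The main potential obstacle is the globalization argument, specifically checking transversality in the $\beta<0$ case, but this is handled in the discussion preceding \eqref{eq:globMfd_bw} and reduces to density of $\mathrm{im}\,D\varphi_n(u)$ and the splitting afforded by finite codimension, both of which apply here.
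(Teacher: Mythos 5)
Your proposal is correct and takes the same approach as the paper: Theorem~\ref{thm:summary} is presented as a summary without a separate proof, compiling Corollary~\ref{cor:Wloc}, Lemmas~\ref{lemma:Mu_loc}, \ref{lemma:uniq_specialcases} and \ref{lemma:conj}, Remark~\ref{remark:smoothness}, and the globalization subsection exactly as you assemble them. The only point worth flagging in your write-up is that the paper's globalization arguments \eqref{eq:globMfd_fw}--\eqref{eq:globMfd_bw} are carried out only for $d = 2$ (where $\varphi_t$ is a global semiflow), a restriction the theorem statement leaves implicit, and that the threshold in the backward-time characterization of $W^\beta$ should be read as $-\beta$ to match the range $\mu \in [-\mu_1,-\mu_2)$ supplied by Lemma~\ref{lemma:uniq_specialcases}.
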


Slow manifolds of stable fixed points
are a particularly common case of interest for physical applications,
and hence warrant some further discussion.

\begin{remark}[Slow manifolds]
\label{remark:slow_mfd}
If $0 \in V$ is stable in the Lyapunov sense,
then Lemma~\ref{lemma:Mu_loc} guarantees the existence of a unique foliation for the whole of $\widehat{B}_{r_1}(0)$, where $r_1>0$ is chosen such that $u \in \widehat{B}_{r_1}(0)$ implies $\mathcal{O}^+(u) \subset \widehat{B}_{r_2}(0)$ (with $r_2$ as in Lemma~\ref{lemma:Mu_loc}).
This means that slow manifolds 
- despite not necessarily being unique - 
represent  \textit{all} nearby trajectories up to an exponential order discrepancy (if there are no other invariant sets in phase space, this property holds globally).
To be more precise,
for any given $\beta < 0$, $\beta \notin \mathrm{Re} \, \sigma(\mathcal{A}_U)$, 
the reduced dynamics on $W^{\beta}_{\mathrm{loc},r_1}$ 
- a finite system of ODEs with dimension equal to $\mathrm{dim}(\mathrm{im}(P_{\Sigma_1}))$ -
describe the long-term behaviour $\varphi_t(u)$, for any $u \in \widehat{B}_{r_1}(0)$, to order $\mathcal{O}(e^{\beta t})$.
\end{remark}

\subsection{Invariant manifolds around periodic orbits}

Periodic orbits - and invariant structures attached to them -
are perhaps the most studied class of invariant sets in the context of viscous fluids.
In channel flows, for instance, the stable manifold attached to the lower branch of travelling wave solutions (a special class of periodic orbits) serves as the separatrix between the basin of attraction of 
the laminar state and more complex states.
Considering their importance,
it is desirable to extend the above generalized
invariant manifold/foliation results
to the case of periodic orbits.

The strategy is to apply the corresponding
(map) version
of Theorem~\ref{thm:main}
to the Poincaré map,
as is standard procedure in the dynamical systems literature.
We first recall the notion of the Poincaré map
and confirm its existence in the present setting.

Assume now
that
there exists a non-constant periodic orbit $\Gamma = \mathcal{O}(u)$ with period $T$, defined as the minimal $T>0$ for which $\varphi_{T}(u) = u$ holds,
and let $S \subset V$ be a codimension one submanifold transverse to $\Gamma$ at $u$.
The Poincaré map of $\Gamma$ is a mapping $\mathbb{P}:O_0 \to O_1$ defined by
\begin{equation}
    \mathbb{P}(v):= \varphi_{T+\delta(v)}(v), \qquad \text{for} \; v \in O_0,
    \label{eq:poincaremap}
\end{equation}
where $O_0,O_1 \subset S$ are open neighborhoods of $u \in S$; $\delta:O_0 \to \mathbb{R}$ is a function such that for all $v \in O_0$, $(T+\delta(v),v) \in \mathcal{D}$ (the domain of $\varphi$); and if $t \in (0,T+\delta(v))$, then $\varphi_t(v) \notin O_0$.

The existence 
of such $\delta$ (and hence $\mathbb{P}$) is subject to the joint smoothness of the semiflow,
which was
confirmed in the course of Corollary~\ref{cor:jointsmoothness} for the case of the Navier-Stokes semiflow over all of $\mathcal{D} \cap (\mathbb{R}^{>0} \times V)$ in the $C^\infty$ sense.
It follows that the Poincaré map $\mathbb{P}$ as in \eqref{eq:poincaremap} is $C^\infty$ (see \cite{marsden1976hopf,abraham1978foundations}, or \cite{henry1981} for a direct implicit function theorem approach).
Moreover, $\mathbb{P}$ is unique up to conjugacy between different choices of $S$ \cite{abraham1978foundations}.
In particular, if $\mathbb{P}:O_0 \to O_1$ and $\mathbb{P}':O_0' \to O_1'$ are two Poincaré maps on $S$ (transversal to $\Gamma$ at $u$) and $S'$ (transversal to $\Gamma$ at $u'$) respectively,
then there exist neighborhoods $O_2 \subset S$ of $u$ and $O_2' \subset S'$ of $u'$ and a $C^\infty$ map $h:O_2 \to O_2'$ such that $O_2 \subset O_0 \cap O_1$, $O_2' \subset O_0'$
and the diagram 
\begin{equation} 
\begin{tikzcd}
    \mathbb{P}^{-1}(O_2) \cap O_2 \arrow[r, "\mathbb{P}"] \arrow[d, "h"']
    & O_2  \arrow[d, "h"] \\
    O_2' \arrow[r, "\mathbb{P}'"']
    &  S'
    \label{eq:poincare_uniq}
\end{tikzcd}
\end{equation}
commutes.
Note that $h$ in \eqref{eq:poincare_uniq} is constructed by propagating along trajectories starting from $S$ up until reaching $S'$ \cite{abraham1978foundations} -- in particular, $h$ and $Dh(u)$ are injective because $\varphi_t$ is.

The spectrum of the linearized Poincaré map is given by $\sigma ( D \mathbb{P} (u)) = \sigma  \left( \left. D \varphi_T(u) \right\vert_{T_uS} \right)$ with
$D \varphi_T(u)$ compact, and hence $\sigma ( D \mathbb{P} (u)) = P\sigma ( D \mathbb{P} (u)) \cup \{ 0 \}$.
The conjugacy \eqref{eq:poincare_uniq} yields
\begin{equation}
   \left( D \mathbb{P}' (u') - \lambda I \right) Dh(u) = Dh(u) \left( D \mathbb{P} (u) - \lambda I \right),
   \label{eq:spectrum_invariance}
\end{equation}
which - along with the injectivity of $Dh(u)$ - implies that $P\sigma ( D \mathbb{P} (u) )  = P\sigma ( D \mathbb{P}' (u') )$, i.e.\ $\sigma(\Gamma) : = \sigma ( D \mathbb{P} (u))$ is independent of the choice of $u$ and $S$ along $\Gamma$.
In the following, we assume $S$ is chosen to be the normal space $N_u \Gamma := T_u V \big/ T_u \Gamma$ for each $u \in \Gamma$, with norm and inner product inherited from $V$.

Similarly to \eqref{eq:sugma}, we decompose the spectrum $\sigma(\Gamma)$ 
into two disjoint subsets 
\begin{equation}
    \Sigma_1 = \left\{ \, \lambda \in \sigma ( \Gamma ) \; \vert \; | \lambda | > \beta_1 \, \right\}, \qquad 
    \Sigma_2 = \left\{ \, \lambda \in \sigma ( \Gamma ) \; \vert \; |\lambda| < \beta_2 \, \right\},
    \label{eq:sugma2PO}
\end{equation}
with $\beta_1 > \beta_2$ -- note by compactness $\Sigma_1$ is finite.
At each point $u$ along the orbit $\Gamma$, we may define a projection $P_{\Sigma_1}(u) \in \mathcal{L}(N_u \Gamma)$ according to 
\begin{displaymath}
  P_{\Sigma_1}(u) = \frac{1}{2 \pi \mathrm{i}} \int_{\gamma_1} R_z ( D \mathbb{P}(u)) dz,
\end{displaymath}
for a curve $\gamma_1$ in $\mathbb{C}$ encircling $\Sigma_1$.
If $h_t: N_u\Gamma \to N_{\varphi_t(u)}\Gamma$ denotes the conjugacy map from \eqref{eq:poincare_uniq} between different choices of base points $\varphi_t(u) \in \Gamma$ parameterized by $t$, we have that $h_t$ depends smoothly on $t$ (see Lemma (2B.3) in \cite{marsden1976hopf}).
Via \eqref{eq:spectrum_invariance}, we obtain 
\begin{equation}
   P_{\Sigma_1}(\varphi_t(u)) D h_t (u) = D h_t (u) P_{\Sigma_1}(u),
   \label{eq:proj_t}
\end{equation}
which implies that $P_{\Sigma_1}:\Gamma \to \mathcal{L}(N\Gamma )$ is smooth in the operator norm,
and hence extends to define a bundle morphism $\mathcal{P}_{\Sigma_1}:N\Gamma \to N\Gamma$  over $\mathrm{id}_\Gamma$ given by $(u,v) \mapsto (u,P_{\Sigma_1}(u)v)$.
Consequently, we obtain a splitting of $N\Gamma$ into spectral subbundles $E^{\Sigma_1}:= \mathrm{im} (\mathcal{P}_{\Sigma_1})$ and $E^{\Sigma_2}:= \mathrm{ker} (\mathcal{P}_{\Sigma_1})$, i.e.
\begin{equation}
   N\Gamma = E^{\Sigma_1} \oplus E^{\Sigma_2}.
   \label{eq:bundle_split}
\end{equation}
Note $E^{\Sigma_1}$ and $E^{\Sigma_2}$ are invariant under the semiflow linearized about the periodic orbit.
The same splitting can be obtained by applying the theory of Sacker and Sell~\cite{SACKER199417}
to the shifted semiflow $\beta^{t/T} \varphi_t$ (for $\beta \in (\beta_2,\beta_1)$)
restricted to the normal bundle.

The following definition is the analogue of Definition~\ref{def:wloc} for periodic orbits.

\begin{definition}
\label{def:wlocPO}
We say that $W^{\Sigma_1}_{\mathrm{loc},r}(\Gamma) \subset B_r(\Gamma)\footnote{Here, $B_r(\Gamma)$ is understood as a radius $r$ open neighborhood of $\Gamma$ in $V$.} \subset V$ is a local $C^k$ spectral submanifold about a periodic orbit $\Gamma$ corresponding to the spectral subset $\Sigma_1$ if it satisfies the following: 
\begin{enumerate}[label=\upshape{(\roman*)}]
    \item $W^{\Sigma_1}_{\mathrm{loc},r}(\Gamma)$ is locally invariant under $\varphi_t$, i.e., if $u \in W^{\Sigma_1}_{\mathrm{loc},r}(\Gamma)$, then $\varphi_t (u ) \in W^{\Sigma_1}_{\mathrm{loc},r}(\Gamma)$ for those $t \geq 0$ with
    $
    \varphi_\tau (u) \in B_r(\Gamma)
    $
    for all $\tau \in [0,t]$;
    \item  $W^{\Sigma_1}_{\mathrm{loc},r}(\Gamma)$ is tangent to $T \Gamma \oplus E^{\Sigma_1}$ at $\Gamma$;
    \item $W^{\Sigma_1}_{\mathrm{loc},r}(\Gamma)$ is a $C^k$ submanifold of $B_r(\Gamma)$.
\end{enumerate} 
\end{definition} 

The corresponding definition for the local foliation (Definition~\ref{def:foliation}) can be extended to periodic orbits analogously.

To conclude existence of these structures,
we now apply (the local version of) Theorem~3.1 of \cite{chen97} to $\mathbb{P}$. 

\begin{figure}
\includegraphics[width=0.7\textwidth]{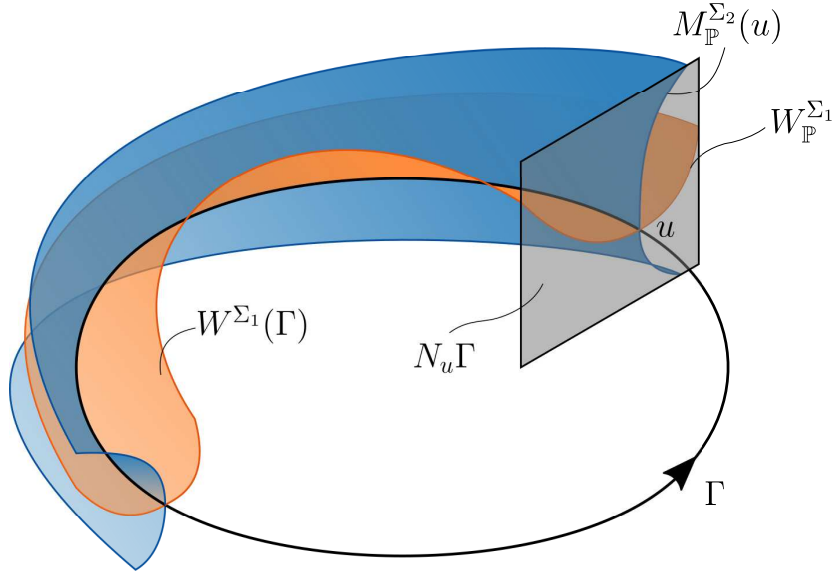}
\centering
\caption{Sketch of the results of Theorem~\ref{thm:PO}, showing a spectral submanifold $W^{\Sigma_1}(\Gamma)$ about the periodic orbit $\Gamma$, and a single leaf of the foliation $M^{\Sigma_2}_{\mathbb{P}}$ integrated about $\Gamma$. 
}
\label{fig:PO}
\end{figure}

\begin{theorem}
\label{thm:PO}
For a periodic orbit $\Gamma \subset V$ of the Navier-Stokes semiflow $\varphi_t$, let $\mathbb{P}:N_u\Gamma \to N_u \Gamma$ denote its associated Poincaré map and let $\sigma(\Gamma)$ denote the spectrum of $D \mathbb{P}(u)$ for some $u \in \Gamma$. 
For any $\beta < \max | \sigma ( \Gamma ) |,$  $\beta \notin | \sigma ( \Gamma ) |$ 
and for $r>0$ small enough, there exists 
a locally invariant manifold $W^{\Sigma_1}_{\mathrm{loc},r}(\Gamma) \subset B_r(\Gamma)$ such that
\begin{enumerate}[label=\upshape{(\roman*)}]
    \item $W^{\Sigma_1}_{\mathrm{loc},r}(\Gamma)$ is an immersed $C^1$ spectral submanifold corresponding to the spectral subset $\Sigma_1 : = \{ \lambda \in \sigma (\Gamma) \, | \, | \lambda | > \beta \}$, in the sense of Definition~\ref{def:wlocPO}.
    \item \label{thm:PO_item2} If $v \in N_{u}\Gamma$ and $\varphi_t (v )$ remains defined for all $t \geq 0$ with $\mathcal{O}^+(v) \subset B_{r_1}(\Gamma)$ (for some $r_1 > 0$ small enough), then there exists a unique $w \in W^{\Sigma_1}_{\mathrm{loc},r}(\Gamma) \cap N_u\Gamma$ such that  $\mathcal{O}^+(w) \subset W^{\Sigma_1}_{\mathrm{loc},r}(\Gamma)$ and 
    \begin{displaymath}
        \Vert \mathbb{P}^n (v) - \mathbb{P}^n (w) \Vert_V \leq C \beta^n, \qquad  n \in \mathbb{N}.
    \end{displaymath}
\end{enumerate}
Moreover, there exists a locally positively invariant foliation for $B_r(\Gamma)$ corresponding to the spectral subset $\Sigma_2 = \sigma (\Gamma) \setminus \Sigma_1$, satisfying properties analogous to Definition~\ref{def:foliation}.
\end{theorem}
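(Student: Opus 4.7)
The plan is to pick a base point $u \in \Gamma$, work in the normal section $N_u\Gamma$, apply (a cut-off version of) the map analogue of Theorem~1.1 of \cite{chen97} (namely their Theorem~3.1) to the Poincaré map $\mathbb{P}:O_0 \to N_u\Gamma$, and finally transport the resulting manifold and foliation along $\Gamma$ using the semiflow $\varphi_t$.

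First I would set up the decomposition $N_u\Gamma = P_{\Sigma_1}(u) N_u\Gamma \oplus P_{\Sigma_2}(u) N_u\Gamma$, using the projections induced by the splitting \eqref{eq:bundle_split} restricted to the fiber at $u$. Since $D\mathbb{P}(u)$ is compact and $\Sigma_1$ is finite, the restriction $L_1 := D\mathbb{P}(u) P_{\Sigma_1}(u)$ is invertible on $P_{\Sigma_1}(u)N_u\Gamma$ with spectral radius $\rho(L_1^{-1})^{-1} > \beta$, while $L_2 := D\mathbb{P}(u)P_{\Sigma_2}(u)$ has spectral radius $< \beta$. This yields estimates of the form $\|L_1^{-k}P_1\| \leq C_1 \alpha_1^{-k}$ and $\|L_2^k P_2\|\leq C_2 \alpha_2^k$ with $\alpha_1 > \beta > \alpha_2$, the spectral condition (H.3) required by \cite{chen97}. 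To verify (H.1)--(H.2) and in particular to meet the smallness requirement (H.4), I would introduce a cut-off Poincaré map $\mathbb{P}_\rho(v) := D\mathbb{P}(u) v + \chi_\rho(v) \bigl( \mathbb{P}(v+u) - u - D\mathbb{P}(u)v \bigr)$ on $N_u\Gamma$, with $\chi_\rho$ as in \eqref{eq:chi}. Since $\mathbb{P}$ is $C^\infty$ (from the joint smoothness of $\varphi_t$ established in Corollary~\ref{cor:jointsmoothness}), its nonlinear part has vanishing derivative at $0$ and thus its global Lipschitz constant can be made arbitrarily small by shrinking $\rho$, satisfying \eqref{eq:lipschitz_requirement}.

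Applying Theorem~3.1 of \cite{chen97} to $\mathbb{P}_\rho$ then furnishes a globally Lipschitz, $C^1$ graph $W^{\Sigma_1}_{\mathbb{P},\rho} \subset N_u\Gamma$ tangent to $P_{\Sigma_1}(u)N_u\Gamma$ at $0$, together with a fiber-preserving foliation $\{M^{\Sigma_2}_{\mathbb{P},\rho}(v)\}_{v \in N_u\Gamma}$ whose leaves are characterized by the decay rate $\|\mathbb{P}^n_\rho(v)-\mathbb{P}^n_\rho(w)\|_V \leq C \beta^n \|v-w\|_V$. Intersecting with a small ball $B_{r_0}^S(0) \subset N_u\Gamma$ on which $\mathbb{P}_\rho \equiv \mathbb{P}$, and translating back to base $u$, yields local objects for the true Poincaré map. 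The decay characterization, together with the fact that inside the tubular neighborhood $\varphi$ and $\varphi_\rho$ coincide, is the ingredient for statement \ref{thm:PO_item2}; uniqueness of the point $w$ follows from completeness of the local foliation (Theorem~\ref{thm:main}\ref{mainthm:stat4}) applied at the level of $\mathbb{P}$, which is independent of the choice of section $S$ by the conjugacy \eqref{eq:poincare_uniq}.

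Finally, I would extend these structures along $\Gamma$ by setting
\begin{displaymath}
    W^{\Sigma_1}_{\mathrm{loc},r}(\Gamma) \; := \; \bigcup_{t \in [0,T]} \varphi_t \bigl( W^{\Sigma_1}_{\mathbb{P}} \cap B^S_{r_0}(u) \bigr) \; \cap \; B_r(\Gamma),
\end{displaymath}
with $r < r_0$ chosen small enough that the tube lies inside $\mathcal{D}$ and $\varphi$ acts as a local diffeomorphism away from the orbit direction. That this is a $C^1$ immersed submanifold follows from the joint smoothness of $\varphi_t$ on $\mathcal{D} \cap (\mathbb{R}^{>0}\times V)$, the injectivity of $\varphi_t$ (time-analyticity of Navier--Stokes solutions), and the $\mathbb{P}$-invariance of $W^{\Sigma_1}_{\mathbb{P}}$, which makes the pieces at $t=0$ and $t=T$ agree. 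The tangent space at a point $\varphi_t(u) \in \Gamma$ then contains $T\Gamma$ (from the $t$-direction) and the transported subspace $D\varphi_t(u)[P_{\Sigma_1}(u)N_u\Gamma]$; by \eqref{eq:proj_t} the latter is exactly the fiber $E^{\Sigma_1}|_{\varphi_t(u)}$, giving the required tangency to $T\Gamma \oplus E^{\Sigma_1}$. The same construction applied to $\{M^{\Sigma_2}_{\mathbb{P}}(v)\}$ produces the complementary foliation of $B_r(\Gamma)$ with the analogue of Definition~\ref{def:foliation}; positive invariance and fiber-preservation follow from $\varphi_{s+T} = \varphi_s \circ \varphi_T$ and the corresponding property of $\mathbb{P}$.

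I expect the main obstacle to be the extension step: showing that the union $\bigcup_{t}\varphi_t(W^{\Sigma_1}_\mathbb{P})$ closes up smoothly into an immersed $C^1$ submanifold of $V$ requires a careful argument combining the implicit function theorem (to write the union as the image of a smooth map out of $[0,T]\times W^{\Sigma_1}_\mathbb{P}$ with non-degenerate differential transverse to $\Gamma$), the finite dimensionality of $E^{\Sigma_1}$ (hence of the normal graph $W^{\Sigma_1}_\mathbb{P}$), and the joint smoothness of $\varphi$. The remaining ingredients -- the Lipschitz cut-off, spectral splitting, and application of Theorem~3.1 of \cite{chen97} -- are essentially direct translations of the arguments of Theorem~\ref{thm:main} and Corollary~\ref{cor:Wloc} from the fixed-point case to the map case.
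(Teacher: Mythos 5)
Your proposal follows essentially the same route as the paper: cut off the nonlinear part $N := \mathbb{P} - D\mathbb{P}(u)$ (which satisfies $DN(u) = 0$) to meet the hypotheses of Theorem~3.1 of Chen et al., obtain the manifold and foliation on a single normal section $N_u\Gamma$, then sweep the resulting objects around the orbit using the semiflow. Your sketch of the immersion step also matches the paper's: the paper makes the direct union rigorous by constructing local charts $\theta(s,\cdot) = \varphi_s\vert_O$ with $O \subset \mathcal{W}_t = h_t(W^{\Sigma_1}_{\mathbb{P}})$, checking $D\theta$ is an isomorphism, observing that $T\Gamma \oplus T\mathcal{W}_t$ is finite dimensional and hence complemented in $V$, and then patching a finite subcover -- exactly the role you assign to the implicit/inverse function theorem plus the finite dimensionality of $E^{\Sigma_1}$.

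There is one substantive discrepancy in your handling of the foliation. You propose sweeping the leaves $M^{\Sigma_2}_{\mathbb{P}}(v)$ \emph{forward} under $\varphi_t$ by the same construction. The paper instead integrates $M^{\Sigma_2}_{\mathbb{P}}$ \emph{backwards} in time via the implicit function theorem, and the choice is not cosmetic: the complementedness argument that makes the paper's $\theta$-chart an immersion hinges on $T_{\varphi_t(u)}\Gamma \oplus T_{\varphi_t(u)}\mathcal{W}_t$ being \emph{finite dimensional}, which holds for the graph $W^{\Sigma_1}_{\mathbb{P}}$ (dimension $|\Sigma_1|$) but fails for the leaves, which have infinite dimension (finite codimension) in $N_u\Gamma$. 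Flowing a finite-codimension manifold forward, $D\varphi_t$ need not have closed image, so the image need not be a submanifold; the correct move, as outlined in the paper's globalization discussion, is to pull back by $\varphi_t$ (equivalently, flow backwards) and invoke transversality and the implicit function theorem, using closedness of the finite-codimension tangent spaces in the Hilbert space $V$. You should swap the direction for the foliation step; the rest of the proposal is consistent with the paper's argument.
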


\begin{proof}
We merely outline the main steps of the proof, as it would otherwise be a repetition of the above procedure for the case of a simple fixed point.

We consider the Poincaré map $\mathbb{P}:N_u\Gamma \to N_u \Gamma$ and decompose it according to the requirements of Theorem~3.1 in \cite{chen97}, with $L:= D \mathbb{P}(u)$ as the linear part and $N:= \mathbb{P} - D \mathbb{P}(u)$ as the nonlinear part.
The dichotomy condition on $L$ \eqref{eq:requiredLbounds1}-\eqref{eq:requiredLbounds2} is satisfied due to the splitting of the (discrete) spectrum \eqref{eq:sugma2PO}.
For $N$, we need to be able to control its (global) Lipschitz constant.
This is achieved by means of a cutoff function, just as above --
note that
$DN(u) = 0$ implies, due to smoothness and the mean value theorem, that there exists a neighborhood on which $\mathrm{Lip}_{N_u\Gamma}(N)$ can be kept small.

With an application of Theorem~3.1, \cite{chen97}, we obtain a $C^1$ manifold $W_{\mathbb{P}}^{\Sigma_1}$ and a $C^1$ foliation $ M_{\mathbb{P}}^{\Sigma_2}$ on $N_u\Gamma$ invariant under $\mathbb{P}$ that separates trajectories according to their Lyapunov exponents in the desired fashion (see Theorem~\ref{thm:main}).

We now integrate this manifold around the periodic orbit, which is
standard procedure
-- but we will nonetheless be thorough here to ensure the resulting manifold remains immersed in $N\Gamma$.

For each $t> 0$, let $\mathcal{W}_t : = h_t \left( W_{\mathbb{P}}^{\Sigma_1} \right) \subset N_{\varphi_t(u)}\Gamma$ denote the time-$t$ immersed submanifold along the orbit, where $h_t:N_u\Gamma \to N_{\varphi_t(u)}\Gamma$ is the conjugacy map from \eqref{eq:proj_t} obtained by propagating along trajectories until reaching $N_{\varphi_t(u)}\Gamma$.
Define $\theta: (-\varepsilon, \varepsilon) \times O \to \mathrm{im}(\theta) \subset V$, with $0 < \varepsilon < t$ and $O$ an open neighborhood of $\varphi_t(u)$ in $ \mathcal{W}_t$, as
\begin{displaymath}
   \theta (s, \cdot ) := \varphi_s \vert_{O},
\end{displaymath}
with $\varepsilon$ small enough to ensure $\varphi_{-\varepsilon}$ is defined on $O \subset h_t \left( W_{\mathbb{P}}^{\Sigma_1} \right)$.
Note that $\theta (0, \cdot ) = \mathrm{id}_{O}$ implies
\begin{displaymath}
    D_2 \theta (0,\varphi_t(u)) = \mathrm{id}_{T_{\varphi_t(u)}\mathcal{W}_t},
\end{displaymath}
and, by definition,
\begin{displaymath}
   D_1 \theta (0,\varphi_t(u)) = \frac{d}{ds} \bigg\vert_{s=t} \varphi_s (u).
\end{displaymath}
Since $\mathcal{W}_t$ is transversal to $\Gamma$,
$D \theta (0,\varphi_t(u)) : T_0 \mathbb{R} \oplus T_{\varphi_t(u)}\mathcal{W}_t \to  T_{\varphi_t(u)} \Gamma \oplus T_{\varphi_t(u)}\mathcal{W}_t $ is an isomorphism. 
Consequently, up to shrinking  $\varepsilon$ and $O$, we may assume $\theta$ maps its domain diffeomorphically onto its image in $V$.
The resulting manifold $\mathcal{W}^{\varepsilon}_t := \mathrm{im}(\theta)$ is immersed in $V$, as
$T_{\varphi_t(u)} \Gamma \oplus T_{\varphi_t(u)}\mathcal{W}_t$ is finite dimensional and hence complemented in $T_{\varphi_t(u)} \Gamma \oplus N_{\varphi_t(u)}\Gamma \simeq V$.

Take a finite subcovering of $\Gamma$ consisting of $\left\{ \, \mathcal{W}^{\varepsilon_i}_{t_i} \cap \Gamma \; \big| \; i = 1,\ldots,n \, \right\}$, with $t_i \in [\delta, T + \delta]$ for some $\delta > 0$.
Then, there exists $r>0$ such that
\begin{displaymath}
W_{\Sigma_1}^{\mathrm{loc},r}(\Gamma): = B_r(\Gamma) \cap \left( \bigcup_{i = 1,\ldots,n} \mathcal{W}^{\varepsilon_i}_{t_i} \right)
\end{displaymath}
is an immersed spectral submanifold in the sense of Definition~\ref{def:wlocPO} (local invariance is clear from the construction, tangency follows from \eqref{eq:proj_t}).

The corresponding result for the foliation and \ref{thm:PO_item2} follows upon integrating $M_{\mathbb{P}}^{\Sigma_2}$ backwards in time via the implicit function theorem.
\end{proof}

\section{Parameterization}
\label{sect:parameterization}

In the preceding text, we sought 
the manifold $W^{\Sigma_1}$
as a graph over its tangent space at $U$, $P_{\Sigma_1}V$.
This is perhaps the most common approach in the classical literature, and, despite its simplicity, it is completely reasonable, since any manifold can be locally viewed as a graph over its own tangent space. 
However, as 
the manifold proceeds to develop a fold further away from the base point (in our case, $U$),
the above approach breaks down, as the envisioned function (in our case, $\phi:P_{\Sigma_1}V \to P_{\Sigma_2} V$) would now be ill-defined.

At the level of numerical computations, invariant manifolds are obtained upon enforcing an invariance relation in an approximate manner, achieved by recursively tuning the function ($\phi$) that represents the manifold.
The specific form of invariance relation could vary:
If we were to proceed with 
the above classical route, we could either use the Lyapunov-Perron integrals from the proof of Theorem~\ref{thm:main} directly, 
or substitute $\phi$ back into the equations - while assuming that trajectories remain on $\mathrm{graph}(\phi)$ - to obtain a PDE for the unknown function $\phi$, which is perhaps the most popular choice.

Instead, we will implement a fairly recent technique called the \textit{parameterization method} -- developed in the early 2000s \cite{Cabre2003a,Cabre2003b,Cabre2005}.
We merely utilize it here for its benefits as a computational tool (see \cite{haro2016} for a recent review detailing its numerical aspects),
but the parameterization method has otherwise created a blooming subject area in 
dynamical systems, fueling much of today's research \cite{haro2016,Mireles2015,BergMireles2016,Gonzalez2022paramfem}. 
Crucially, the method envisions the manifold as the image of an embedding - rather than the above detailed
graph approach - circumventing the issue regarding folds, and hence possibly enlarging its (initial)\footnote{The obtained manifolds can be subsequently globalized further via time-integration.} domain of existence. 
This is especially important from the point of view of model reduction -- 
to obtain a
distinctive edge over Galerkin-type reductions that are based on the linearized dynamics,
nonlinear reduced models have to be reliable a sizeable distance away from the fixed point (or periodic orbit).

We now recall the basic principles of the method 
adapted to the setting of the Navier-Stokes system, but the reader is encouraged to obtain the full picture from the work of Haro et al.~\cite{haro2016}.
Assume, as before, that $U$ is a fixed point of the Navier-Stokes equations \eqref{eq:NSE}, and consider
\begin{equation}
    \mathcal{X}: u \mapsto \mathcal{A}_U u - B(u,u)
    \label{eq:mathXdef}
\end{equation}
as a $C^\infty$ vector field on $\mathrm{dom}_V(\mathcal{A}_U)$, so that $\mathcal{X}(u) \in T_u\mathrm{dom}_V(\mathcal{A}_U) \simeq V$. 
The equation for the perturbation velocity $u$ now reads
\begin{equation}
    \frac{d u}{dt} = \mathcal{X}(u),
    \label{eq:p_fullsys}
\end{equation}
with $\mathcal{X}(0) = 0$ as a fixed point;
 the full flow state may be recovered as $v = U + u$.

The parameterization method seeks invariant manifolds of \eqref{eq:p_fullsys} as an embedding of a \textit{model manifold} $\Theta$ into the full phase space $ V$ via the \textit{parameterization} $K: \Theta \to \mathrm{dom}_V(\mathcal{A}_U)  \subset V$.
In this work, the model manifold $\Theta$ will always be taken to be a hypercube in some Euclidean space.

The submanifold $\mathrm{im}(K) \subset \mathrm{dom}_V(\mathcal{A}_U)$ will be invariant under the evolution of \eqref{eq:p_fullsys} if there exists a vector field $R: \Theta \to T\Theta$ such that\footnote{Here, $DK:T\Theta \to T\mathrm{dom}_V(\mathcal{A}_U)$ is the usual fiberwise derivative map along $K$.}
\begin{equation}
    \mathcal{X} \circ K = DK [R]
    \label{eq:invariance}
\end{equation}
holds.
In other words, \eqref{eq:invariance} asserts that $\mathcal{X} \vert_{\mathrm{im}(K)}$ should be equal to the pushforward of $R$.
The obtained reduced model on $\Theta$ is governed by
$$
\dot{\theta} = R(\theta),
$$
for some choice of local coordinates $\theta$.
The invariance equation \eqref{eq:invariance} also implies that
the following conjugacy relationship
$$
    \varphi_t \circ K = K \circ \varphi^R_t
$$
holds between
the flow of $R$, $\varphi^R_t$, and
full Navier-Stokes semiflow restricted to $\mathrm{im}(K)$.

\begin{figure}
\includegraphics[width=0.9\textwidth]{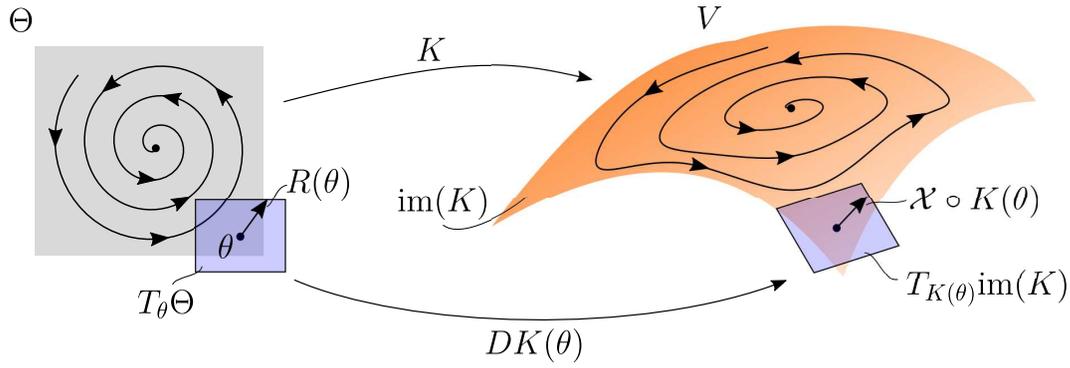}
\centering
\caption{Geometrical interpretation of the invariance equation \eqref{eq:invariance}.
}
\label{fig:param}
\end{figure}

\subsection{Solving the invariance equation}

With the use of parameterization, we obtain an additional
degree of freedom over traditional methods,
which results from the invariance equation \eqref{eq:invariance} being underdetermined. 
In particular, we are free to choose the dynamics on $\Theta$ up to a conjugacy.
Indeed,
if $(K,R)$ is a solution pair of \eqref{eq:invariance}, then so is
$(K \circ h, (Dh)^{-1}   [R \circ h])$ 
for any diffeomorphism $h$ on $\Theta$. 

The classical graph transform method may be recovered as a special case - referred to as the \textit{graph style} of parameterization \cite{haro2016} - upon selecting $K$ and $\Theta$ to match the set of assumptions determined by the prior.
Within the present framework, this amounts to taking
$\Theta \subset P_{\Sigma_1}V$ to be an open subset
and $K = (\mathrm{id}_{\Theta},\phi)$, where $\phi: \Theta \to P_{\Sigma_2}V$ is the sought function that represents the manifold as $\mathrm{graph}(\phi) = \mathrm{im}(K)$.
Substituting back into the invariance equation \eqref{eq:invariance} and applying $P_{\Sigma_2}$ yields
\begin{equation}
    P_{\Sigma_2} \mathcal{X} (\theta,\phi(\theta)) =  D \phi (\theta) [R(\theta)], \qquad \theta \in \Theta,
    \label{eq:PDEinvariance}
\end{equation}
where $R(\theta) = P_{\Sigma_1}\mathcal{X}(\theta,\phi(\theta))$ governs the reduced dynamics.
With equation \eqref{eq:PDEinvariance}, we have recovered the well-known PDE formulation of the invariance equation.
Notice that the particular choice of $K$ fixes the diffeomorphism $h$ (from the preceding paragraph), and hence we have had no room to adjust the reduced dynamics $R$ -- they were obtained as a byproduct of the graph transform procedure.

The general form of parameterization \eqref{eq:invariance} allows for much more.
In particular, it
leaves room to simultaneously perform a normal form transformation \cite{murdock2003}  with the invariant manifold computation.
The exact procedure 
is easier to detail once power series expansions
have already been carried out, which is the subject of the following section.

\subsubsection{Power series expansions}

From now on, we shall assume without loss of generality that $\Theta$ is a finite dimensional Euclidean space $\mathbb{R}^r$ or some nonmeagre subset thereof.
Let us also fix an orthonormal basis $\{e_i \}_{i=1,\ldots,r}$ of $\Theta$.

Following the methodology of Haro et al.~\cite{haro2016},
we seek the parameterization $K: \Theta \to \mathrm{dom}_V (\mathcal{A}_U)$ and the reduced dynamics $R: \Theta \to \Theta$ as
power series in $\theta = (\theta_1,\ldots,\theta_r)$ of the form
\begin{equation}
    K(\theta) = \sum_{j \geq 1} K_j(\theta); \qquad R(\theta) = \sum_{j \geq 1} R_j(\theta),
    \label{eq:expansions}
\end{equation}
where each $K_j$ (and $R_j$, respectively) is an $r$-variate homogeneous polynomial of order $j$ 
with values in $\mathrm{dom}_V (\mathcal{A}_U)$ (resp.\ $\Theta$).

Using expansions of the form \eqref{eq:expansions}, the invariance equation \eqref{eq:invariance} is solved recursively on an order-by-order basis.
At order $j$, \eqref{eq:invariance} reads
\begin{equation}
    (\mathcal{X} \circ K )_j = \sum_{k = 1}^j D K_{j-k+1}[R_k], \qquad j = 1, \ldots, 
    \label{eq:orderj}
\end{equation}
where the left hand side can be written explicitly using \eqref{eq:mathXdef} as
$$
(\mathcal{X} \circ K )_j (\theta)= \mathcal{A}_U K_j (\theta) - \sum_{r = 1}^{j-1} B(K_{j-r}(\theta),K_{r}(\theta)).
$$

At order $1$, \eqref{eq:orderj} yields 
\begin{equation}
    \mathcal{A}_U K_1 (\theta) = DK_1(\theta)[R_1(\theta)] = K_1 \circ R_1 (\theta),
    \label{eq:firstorder}
\end{equation}
which implies that $\mathcal{A}_U$ leaves $\mathrm{im}(K_1)$ invariant.
Moreover, we necessarily have that $\mathrm{im}(K_1)$ spans the tangent space of the desired manifold at $0$, $T_0 \mathrm{im}(K)$, which should further equal $\mathrm{im}(P_{\Sigma_1})$.
A convenient choice is to have eigenfunctions (and generalized eigenfunctions) $w_i$, $i = 1,\ldots,r$ of $\mathcal{A}_U$ corresponding to $\Sigma_1$ constitute the basis of $\mathrm{im}(P_{\Sigma_1})$,
and then define $K_1:\Theta \to \mathrm{dom}_V(\mathcal{A}_U)$ by requiring that $K_1(e_i) = w_i$, $i = 1,\ldots,r$.
Corresponding to the basis $\{e_i \}_{i=1,\ldots,r}$, $R_1$ is then the Jordan normal form of $\mathcal{A}_U P_{\Sigma_1}$.

At subsequent orders, the invariance equation \eqref{eq:orderj} reads
\begin{multline}
    \mathcal{A}_U K_j (\theta) - DK_j(\theta)[R_1(\theta)] -K_1 \circ R_j(\theta) = \\ \sum_{r = 1}^{j-1} B(K_{j-r}(\theta),K_{r}(\theta)) +  \sum_{k = 2}^{j-1} D K_{j-k+1}(\theta)[R_k(\theta)] =: \eta_j(\theta), \qquad j = 2,\ldots, 
    \label{eq:higherorders}
\end{multline}
where the right hand side is composed solely of lower order terms that have already been computed by this stage.
In \eqref{eq:higherorders}, the highest order
term of the reduced dynamics, $R_j$, only contributes to the $P_{\Sigma_1}V$
part of the splitting $P_{\Sigma_1}V \oplus P_{\Sigma_2}V$ of $V$.
Hence, projecting onto the second factor, we obtain the following functional equation
\begin{equation}
      \mathcal{A}_U P_{\Sigma_2} K_j  - D(P_{\Sigma_2}K_j)[R_1]  = P_{\Sigma_2}\eta_j,
    \label{eq:proj2invariance} 
\end{equation}
which determines $P_{\Sigma_2} K_j$ provided that the operation $M \mapsto \mathcal{A}_U M  - DM[R_1]$ is invertible between appropriate spaces of $j$-multilinear maps \cite{Cabre2003a}. 
Due to the two space approach used in setting up the invariance equation,
the solvability of \eqref{eq:proj2invariance} is not obvious and,
to our knowledge, not yet known.
Since we are only using parameterization in this work to accommodate computations (the existence of the sought manifolds is already known from Section~\ref{sect:manifolds}),
we will address this only at the level of numerics -- once the function space $V$ has been discretized.
In this case, there is no longer a distinction between $\mathrm{dom}_V(\mathcal{A}_U)$ and $V$; and $\mathcal{A}_U$ acts as a bounded operator, which permits the use of
proposition~3.2 of \cite{Cabre2003a}.
We arrive at the standard non-resonance conditions:\footnote{The operation $\mathrm{vec}$ sends $\Sigma_1$ to a vector composed of all its elements, repeated according to their algebraic multiplicity (as an eigenvalue).}
\begin{equation}
    \langle \mathsf{a}, \mathrm{vec}( \Sigma_1) \rangle_{\mathbb{R}^r} \notin \Sigma_2, \qquad \text{for all } \mathsf{a} \in \mathbb{N}^r \text{ with } |\mathsf{a} |_{\ell^1} = j,
    \label{eq:cross_resonance}
\end{equation}
which serve as a necessary and sufficient conditions to solve (the discretized version of) \eqref{eq:proj2invariance} at order $j$.
On their occurrence (i.e.\ whenever $\langle \mathsf{a}, \mathrm{vec}( \Sigma_1) \rangle_{\mathbb{R}^r} \in \Sigma_2$ for some $\mathsf{a} \in \mathbb{N}^r$ with $|\mathsf{a} |_{\ell^1} = j$), they are termed \textit{cross resonances}. 
These conditions appear as a result of seeking $K$
in the form of a power series expansion, and hence any other technique (such as the graph transform) would suffer from them equivalently, once the relevant expansions are assumed.\footnote{Clearly, the proof of Theorem~\ref{thm:main} circumvented such expansions -- but for computational purposes, power series are ideal.}

The other half of \eqref{eq:higherorders} reads
\begin{equation}
      \mathcal{A}_U P_{\Sigma_1} K_j - D(P_{\Sigma_1}K_j)[R_1] -K_1 \circ R_j = P_{\Sigma_1}\eta_j.
     \label{eq:proj1invariance}
\end{equation}
A suitable solution to \eqref{eq:proj1invariance} would be to set $P_{\Sigma_1}K_j = 0$ and adjust $R_j$ accordingly, since $K_1:\Theta \to \mathrm{im}(P_{\Sigma_1})$ is bijective -- doing so at every order would once again recover the graph transform approach.
Alternatively,
if the corresponding set of non-resonance conditions (termed \textit{internal resonance} conditions) permit, i.e.
\begin{equation}
    \langle \mathsf{a}, \mathrm{vec}( \Sigma_1) \rangle_{\mathbb{R}^r} \notin \Sigma_1, \qquad \text{for all } \mathsf{a} \in \mathbb{N}^r \text{ with } |\mathsf{a} |_{\ell^1} = j,
    \label{eq:internal_resonance}
\end{equation}
holds, $R_j$ can be chosen arbitrarily, and $P_{\Sigma_1}K_j$ will be uniquely determined as above.
The choice $R_j = 0$ essentially recovers the standard normal form procedure \cite{murdock2003} for the reduced dynamics, and hence is termed the \textit{normal form style} of parameterization.
To reiterate, condition \eqref{eq:internal_resonance} is not necessary for the manifold computation -- but merely for the normal form procedure.
In particular, if \eqref{eq:internal_resonance} holds at each order, the dynamics on the manifold $\mathrm{im}(K)$ are linearizable.

\begin{remark} \label{remark:param_style} 
Whether the graph or normal form style of parameterization is more suitable depends on the problem at hand.
Generally speaking, unless there are near internal resonances present, the normal form style
is expected to perform better - computationally at least - as there will be less coefficients to extract \cite{haro2016}.
If there are near internal resonances present, 
enforcing $R_j = 0$ as part of the normal form procedure in \eqref{eq:proj1invariance} may result in small divisors, resulting in ill-conditioned matrices when solving for $P_{\Sigma_1}K_j$.
The ideal route is to mix the two styles, and only set coefficients of $R_j$ to zero that correspond to non-resonant subspaces.
Moreover, the domain of validity of the normal form transformation may be smaller than that of the invariant manifold, and hence one has to be careful, especially when the objective is to extract other nearby invariant sets. 
In our numerical implementation (based on that of Jain et al.~\cite{shobhit2021code}), \eqref{eq:cross_resonance} and \eqref{eq:internal_resonance} are checked on an example-by-example basis, with an adjustable tolerance controlling the degree of permissible internal resonances,
which ensures that the ideal mix of the graph and normal form approaches is achieved.
A relevant example is given in Section~\ref{sect:example_lam}, highlighting the importance of choosing said tolerance parameter carefully.
\end{remark}

Note that
if \eqref{eq:cross_resonance} holds in the discretized system - which is necessary to compute the manifold -
the obtained manifold will be unique in some subclass $C^L$ of sufficiently smooth manifolds (for some $L$ large enough, possibly depending on the discretization), according to the main theorem of parameterization \cite{Cabre2003a}.
Therefore, within the realm of the discretized system, the power series will converge to the unique smoothest manifold, even in cases where we could not achieve theoretical uniqueness in Section~\ref{sect:uniqueness}.
Despite having no theoretical relevance, uniqueness in this 'numerical' sense eliminates the ambiguity and convergence issues that plague center manifold calculations, for instance (see the introduction to \cite{Haller2016} for a review of this issue).

Finally, it is clear that if a $C^k$ map $K:\Theta \to \mathrm{dom}_V(\mathcal{A}_U)$ satisfying \eqref{eq:invariance} and \eqref{eq:firstorder} 
exists,
then $\mathrm{im}(K) \cap \widehat{B}_r(0)$ defines 
a $C^k$ spectral submanifold $W^{\Sigma_1}_{\mathrm{loc},r}$ in the sense of Definition~\ref{def:wloc}. 
Hence, the obtained manifold is locally attracting according to Corollary~\ref{cor:Wloc} (and the discussion following it), with exponent $\beta > \sup_{\lambda \in \Sigma_2} \; \mathrm{Re} \lambda$.

\section{Example: 2D channel flow}
\label{sect:channelflow}

We take pressure-driven, two dimensional channel flow of a Newtonian fluid as our main example.
In this context, invariant manifold theory will
be of most use in understanding
heteroclinic transitions between travelling wave solutions
(for similar examples, see e.g.\ 
\cite{halcrow2009heteroclinic,budanur2017heteroclinic,van2011matrix}).
Travelling waves appear as fixed points in a moving frame, and are usually termed \textit{exact coherent states} in the applied fluids literature.

Before discussing any technical peculiarities of the problem or the numerical set-up,
let us give a sample application
concerned with the simplest example of travelling wave solutions, the laminar state.

\subsection{Invariant manifolds about the laminar state}
\label{sect:example_lam}

The laminar flow is a stationary solution of \eqref{eq:NSE} in the appropriate, channel flow set-up (see Section~\ref{sect:channel_setup}), depicted in Figure~\ref{fig:lam}.
The coordinate system $(x_1,x_2)$ is aligned in all examples such that $x_1$ denotes the streamwise component and $x_2$ the wall-normal component.

\begin{figure}
\includegraphics[width=0.5\textwidth]{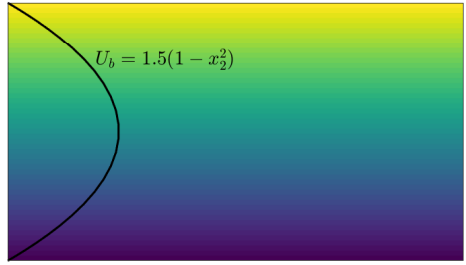}
\centering
\caption{The laminar base state $U_b$ in a channel flow. 
Colored contours correspond to the vorticity; the solid black line shows the streamwise ($x_1$) component of the velocity (the wall-normal ($x_2$) component is trivially $0$).
}
\label{fig:lam}
\end{figure}

It is well known, in the context of 2D channel flows,
that the laminar state turns unstable 
as the \textit{Reynolds number} 
is increased above a certain threshold $Re_c$ \cite{orszag_1971}.
The loss of stability occurs via a subcritical Hopf bifurcation,
and hence,
there exists a branch of (unstable) periodic orbits emanating from it
that may be traced down to
Reynolds numbers below the critical value ($Re < Re_c$).
This situation is pictured in Figure~\ref{fig:kcrit_branch} over a single Poincaré section.

\begin{figure}
\includegraphics[width=0.5\textwidth]{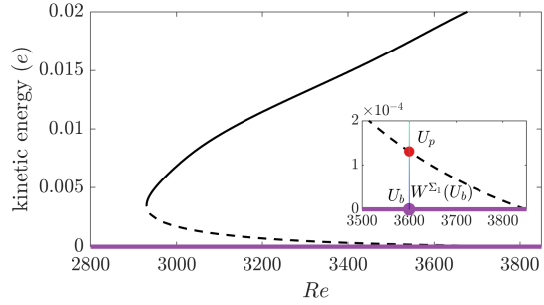}
\centering
\caption[Bifurcation diagram of 2D channel flow]{Bifurcation diagram of 2D channel flow, obtained via
branch continuation 
techniques of Section~\ref{sect:TWformulation}, with wavenumber fixed as $k = k_{crit} = 1.02056$ 
(the critical wavenumber for which the laminar state turns unstable at minimal $Re$). 
On the vertical axis, $e$ is defined as the perturbation kinetic energy above the laminar state, see \eqref{eq:pert_kin_e}. The laminar solution is shown in purple, the nontrivial branch emanating from the bifurcation at $Re_{crit} \approx 3848$\footnotemark is shown in solid black.
}
\label{fig:kcrit_branch}
\end{figure}
\footnotetext{Note that this differs from the well-known $Re_{crit} = 5772$ \cite{orszag_1971} by a factor of 1.5 due to our definition of $Re$ \eqref{eq:Reynolds}.} 

The initial, unstable, lower branch of solutions in Figure~\ref{fig:kcrit_branch} 
serves the purpose of mediating between the laminar and more complex states.
States as such are customarily termed \textit{edge states}.
Due to their significance, 
it is desirable to extract them directly,
without having to rely on continuation techniques that are launched from the laminar bifurcation. 
Indeed, in many scenarios
it is cumbersome to perform the latter (see Example~\ref{sect:example_visco}), or even outright impossible (e.g., in pipe and Couette flows, where the laminar state remains stable).

\begin{figure}
\includegraphics[width=0.5\textwidth]{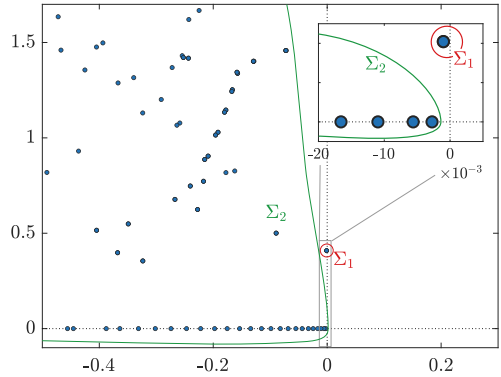}
\centering
\caption{Spectrum of the linearized operator $\mathcal{A}_{U_b}$ about the laminar base state $U_b$ shown on the complex plane, partitioned according to \eqref{eq:sugma}. 
}
\label{fig:spectrum_lam}
\end{figure}

We shall do so at a single point $Re = 3600 < Re_c$, by computing the slow manifold at the (linearly stable) laminar state $U_b$, whose associated spectrum $\sigma(\mathcal{A}_{U_b})$ is shown in Figure~\ref{fig:spectrum_lam}.
Even though such a slow manifold is generally non-unique,
the one we obtain via the parameterization procedure will  
generally
contain the nontrivial periodic orbit $U_p$, at least in the discretized, finite-dimensional setting.
Indeed, numerical experiments suggest that $U_p \notin W^{\Sigma_2}(U_b)$ and that $W^u(U_p)$ connects back to $U_b$\footnote{These can be verified without needing to rely on experiments in the vicinity (in terms of $Re$) of the laminar bifurcation.},
in which case, $U_p$ has to lie on a $C^\infty$ smooth slow manifold $W^{\Sigma_1}(U_b)$,
unique amongst a sufficiently smooth subclass of manifolds tangent to $\mathrm{im}(P_{\Sigma_1})$ 
\cite{Cabre2003a} and precisely the one we obtain via parameterization.

To discretize the system, we have used $40$ Chebyshev modes in the wall-normal direction and $30$ Fourier modes in the streamwise direction -- a detailed account of the formulation is deferred to the following sections.

Once the spectrum and its eigenvectors are computed (Figure~\ref{fig:spectrum_lam}), $K_1$ and $R_1 $ are readily available. 
The rest of the coefficients are computed order-by-order according to the invariance relation \eqref{eq:higherorders} -- up to order $3$, for this preliminary example.
The resulting invariant manifold $W^{\Sigma_1}(U_b) = \mathrm{im}(K)$ is shown in Figure~\ref{fig:main_lam}, embedded in a $3$-dimensional space spanned by
the perturbation kinetic energy
 \begin{equation}
    e = \frac{1}{2} \Vert u \Vert_{L^2(\Omega,\mathbb{R}^2)}^2,
    \label{eq:pert_kin_e}
 \end{equation}
the mean wall-normal velocity
\begin{equation}
    \mathrm{mwnv} = \int_{-1}^1 u_2 (0,x_2) dx_2
    \label{eq:mwnv}
\end{equation}
and the streamwise velocity fluctuation 
\begin{equation}
    \mathrm{svf} = u_1(0,\hat{x}_2).
    \label{eq:svf}
\end{equation}
Here $u := v - U_b$ denotes the perturbation velocity field above the laminar state, the domain $\Omega$ is specified in \eqref{eq:channel_domain}, and $\hat{x}_{2}$ is some distinguished point in the wall-normal direction.
In this example, we choose $\hat{x}_2 = 0$, so that $\mathrm{svf}$ agrees with the streamwise centerline velocity.

\begin{figure}
\includegraphics[width=1\textwidth]{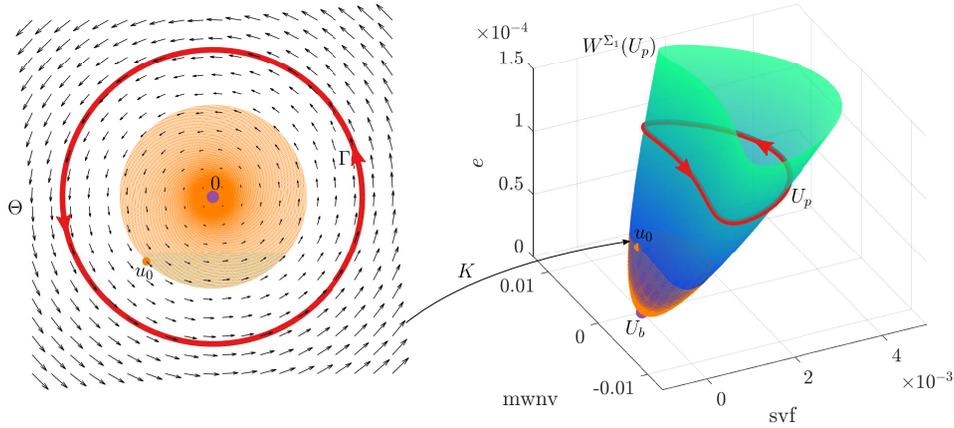}
\centering
\caption{Parameterization of the slow manifold $W^{\Sigma_1}(U_b)$ about the laminar state. (Left) The model space $\Theta$ along with its reduced dynamics described by $R$. (Right) The manifold $W^{\Sigma_1}(U_b) = \mathrm{im}(K)$ embedded in a three-dimensional phase space spanned by $e$, $\mathrm{mwnv}$ and $\mathrm{svf}$.
A single, stable solution with initial condition $u_0$ is shown on both panels, slowly evolving back to $U_b$ ($0$ in the perturbation coordinate system). 
}
\label{fig:main_lam}
\end{figure}

The reduced dynamics $\dot{\theta} = R(\theta)$ on the model space $\Theta$ are governed by 
the simple complex eigenvalue $ \lambda_1 = -0.00105 + 0.409\mathrm{i}$ (the only element of $\Sigma_1$) at linear order.
At order $3$, an additional term appears (via the graph style of parameterization),
resulting in the following 2-dimensional ODE in polar coordinates:
\begin{subequations}
\begin{align}
    \dot{r} &=  -0.00105r + 9.23 r^3,\\
    \dot{\vartheta} &= 0.409 + 52.4 r^2.
\end{align}\label{eq:LAMEX_red_dyn}\end{subequations}
The corresponding vector field and 
invariant sets at $\{(0,0)\}$ and $ \Gamma(t) =\{(r,\vartheta(t)) \; \vert \; r = r_p \}$, with $r_p \approx 0.0107$,
are shown on the left panel of Figure~\ref{fig:main_lam}.
Now $U_p = K(\Gamma)$ yields our desired periodic orbit
on the lower branch of bifurcated solutions (c.f.\ Figure~\ref{fig:kcrit_branch}).
Note that $\dot{\vartheta} = \mathrm{const}.$ on $r = r_p$, and hence $\Gamma$ is a constant speed periodic orbit, i.e., a travelling wave.

The higher order terms in the vector field $R$ \eqref{eq:LAMEX_red_dyn} were computed due to the near internal resonance (cf.\ \eqref{eq:internal_resonance} and Remark~\ref{remark:param_style})
\begin{equation}
    2 \lambda_1 + \bar{\lambda}_1 \approx \lambda_1.
    \label{eq:resonance_example}
\end{equation}
If the goal is to extract other nontrivial states on the manifold $W^{\Sigma_1}$, one should always disallow near resonances of the form \eqref{eq:resonance_example} via either adjusting the tolerance parameter  or opting for the graph style of parameterization.

We now turn to address the outstanding issue we have conveniently skipped -- the technical aspects of implementation.
The formulation will be introduced in a more general framework that permits
travelling wave solutions to be time-independent, so that invariant manifolds along the nontrivial branches become easier to compute (the setup of the above example is retained as a special case with phase speed $0$).

\subsection{Changes to the functional setup}
\label{sect:channel_setup}

First, to accommodate the channel flow setting,
some minor changes are necessary to the
functional setup.
We 
follow the recommendations
outlined in the work of Foias et al.~\cite{foias2001} (see Section~2 of Chapter~II therein).

We assume that the fluid occupies a rectangular domain
\begin{equation}
    \Omega = \left( 0, 2 \pi / k \right) \times \left( -1,1 \right)
    \label{eq:channel_domain}
\end{equation}
between two parallel, stationary, rigid plates that are modeled via no-slip boundary conditions, i.e.\
$$
v = 0 \quad \text{for} \quad  x_2 = \pm 1.
$$
In the remaining, streamwise direction $e_1$, the flow state $(v,p)$ is assumed to be periodic with period $2 \pi / k$ -- here, $k$ is the assumed streamwise wavenumber, which will be treated as a model parameter. 

The fluid flow is sustained by a constant pressure drop $P_1$ between the two ends of the channel, which is \textit{not} periodic and hence has to be absorbed into the forcing term $ f := P_1 e_1$.
Anticipating the Reynolds number formulation, $P_1$ will be treated as an auxiliary variable, so that the volume flux can be kept constant as\footnote{Note that the left hand side of \eqref{eq:volflux} is independent of $x_1$ by incompressibility.}
\begin{equation}
   \frac{1}{2}  \int_{-1}^{1} v_1(x_1,x_2) \, dx_2 = Q,
    \label{eq:volflux}
\end{equation}
where the desired volume flux $Q$ is determined by the laminar state:
\begin{equation}
    Q = \frac{1}{2}\int_{-1}^{1} U_{b}(x_2) \, dx_2.
    \label{eq:volflux_lamstate}
\end{equation}
Condition \eqref{eq:volflux} ensures that the Reynolds number defined by
\begin{equation}
    Re := \frac{ Q}{ \nu}
    \label{eq:Reynolds}
\end{equation}
can be used to non-dimensionalize the Navier-Stokes equations as in \eqref{eq:NSE}, where $\nu$ denotes the kinematic viscosity of the fluid.

The function spaces $H$ and $V$ can now be defined analogously to Section~\ref{sect:functional_setup} with the altered boundary conditions. 
Note that condition \eqref{eq:volflux} also ensures that the Poincaré inequality continues to hold in the space of perturbation functions $u$.
From then on, the analysis/setup can be performed just as in Section~\ref{sect:semiflow}, with little to no adjustments.

For the sake of clarity, we define explicitly the function space intended for the setting of the perturbative equations \eqref{eq:NSE_channel_pert} below as
\begin{equation}
    V := \overline{ \Big\{ \, u \in C^\infty_{per \times c} (\Omega, \mathbb{R}^2) \; \big\vert \; \int_\Omega u = 0, \: \mathrm{div} \, u = 0 \, \Big\} }^{W^{1,2}(\Omega, \mathbb{R}^2)},
    \label{eq:V_channelflow}
\end{equation} 
where $C^\infty_{per \times c} (\Omega, \mathbb{R}^2)$ denotes the space of smooth functions over $\Omega$ with values in $\mathbb{R}^2$ that are periodic in the $e_1$ direction and have compact support with respect to $e_2$.

\subsection{Travelling wave formulation and discretization}
\label{sect:TWformulation}

The numerical setup introduced in this section is commonly employed in the applied fluids literature, see the work of Dijkstra et al.\ \cite{dijkstra_14} for a general overview.
As a preliminary step, the formulation will be altered slightly, allowing travelling waves to be captured as fixed points (rather than periodic orbits) -- in a Galilean frame.
Introducing $\tilde{x}_1 := x_1 - ct$ as the new frame of reference, the equations are reduced to
\begin{subequations}
\begin{gather}
     \left( \partial_t - c \partial_{\tilde{x}_1} \right) v - \frac{1}{Re} \Delta v + (v \cdot \nabla ) v + \nabla p = f \quad \text{in } \Omega, \\
    \nabla \cdot v = 0 \quad \text{in } \Omega, \\
    v = 0 \quad \text{on } \partial \Omega, \label{eq:Dirichlet} \\
    \frac{1}{|\Omega |}\int_\Omega v_1 = 1, \label{eq:nondimensional_flux} \\
    \langle v_1, \sin (k \tilde{x}_1) \rangle_{L^2(\Omega,\mathbb{R}^2)} = 0. \label{eq:poincaresect}
\end{gather}\label{eq:NSE_channel}\end{subequations}
Here, \eqref{eq:nondimensional_flux} - the dimensionless version of \eqref{eq:volflux} - is enforced through adjusting $f$; 
and \eqref{eq:poincaresect} fixes the Poincaré section (or phase).

Next, we introduce the discretization to be used throughout this work.
As is customary in the case of unidirectional wall-bounded flows, 
we project onto  
a finite dimensional subspace spanned by a truncated Fourier-Chebyshev basis, so that 
\begin{displaymath}
    (v_1,v_2,p)(\tilde{x}_1,x_2,t) \approx \sum_{n=-N_1}^{N_1} \sum_{m=0}^{N_2} \mathsf{v}_{nm}(t) \Phi_n(\tilde{x}_1)  T_m(x_2),
\end{displaymath}
where
$$
\Phi_n(\tilde{x}_1):= \sqrt{k/(2 \pi)} \, e^{\mathrm{i}nk\tilde{x}_1} \quad {\rm and} \quad
T_m(x_2) := \cos [m \cos^{-1}(x_2)]
$$
denote the Fourier and Chebyshev basis elements, respectively; and $\mathsf{v}_{nm} = (\mathsf{v}_1,\mathsf{v}_2,\mathsf{p})_{nm}: \mathbb{R} \to \mathbb{C}^3$, is a solution curve
with values in
the space of coefficients.
Since $v$ and $p$ are real in physical space, the negative indices have to satisfy $\mathsf{v}_{(-n)m} =\bar{\mathsf{v}}_{nm}$, and hence the equations need only be solved for half of the Fourier coefficients.

In search of fixed points of \eqref{eq:NSE_channel}, we require that $\mathsf{v}_{nm}$  remains constant, and arrive at
\begin{equation}
  \sum_{m}  \mathsf{A}_n ( \mathsf{v}_{nm} T_m ) - \sum_{j,m,q} \mathsf{B}_{n-q} (\mathsf{v}_{qm} T_m, \mathsf{v}_{(n-q)j} T_j) + \mathsf{f}_n = 0, \quad n = 0,\ldots,N_1,
  \label{eq:discFourier}
\end{equation}
where 
\begin{gather*}
   \mathsf{A}_n g = 
\begin{pmatrix}
c \mathrm{i}nk g_1 - \mathrm{i}nk g_3 +\frac{1}{Re} \left(  \partial_{x_2 x_2}-(nk)^2 \right) g_1    \\
c \mathrm{i}nk g_2 - \partial_{x_2} g_3 +\frac{1}{Re} \left(  \partial_{x_2 x_2}-(nk)^2 \right) g_2 \\
 \mathrm{i}nk g_1 + \partial_{x_2} g_2
\end{pmatrix}; 
\\\mathsf{B}_{n-q}(g,h) =
\begin{pmatrix}
 g_1 \mathrm{i}(n-q) h_1 + g_2 \partial_{x_2} h_1 \\
  g_1 \mathrm{i}(n-q) h_2 + g_2 \partial_{x_2} h_2 \\
  0
\end{pmatrix}, \qquad \mathsf{f}_n = \delta_{0n} \begin{pmatrix}
f \\
0
\end{pmatrix},
\end{gather*}
for functions $g,h: [-1,1] \to \mathbb{C}^3$ in $x_2$.
In \eqref{eq:discFourier}, we have projected system \eqref{eq:NSE_channel} back onto the first $N_1$ Fourier modes; to treat the $x_2$ direction, we use collocation points instead. 
Equation \eqref{eq:discFourier} is evaluated at the so-called Gauss-Lobatto points given by 
\begin{equation}
    x_{2,s} = \cos \left(\frac{s \pi}{N_2} \right) \in [-1,1], \qquad s = 0,\ldots,N_2,
    \label{eq:gauss_lobatto}
\end{equation}
which, along with the volume flux condition \eqref{eq:nondimensional_flux} and the phase condition \eqref{eq:poincaresect},
yields enough equations to solve for the coefficients $\mathsf{v}_{nm}$, $n = 0,\ldots,N_1$, $m = 0,\ldots,N_2$, $f$ and $c$.
The Dirichlet boundary condition~\eqref{eq:Dirichlet} is achieved by replacing the relevant rows of (the $x_{2,s}$-projected version of) \eqref{eq:discFourier} by equations of the form
\begin{equation}
    \sum_m \mathsf{v}_{nm} T_m (x_{2,0}) = 0, \qquad \text{and} \qquad \sum_m \mathsf{v}_{nm} T_m (x_{2,N_2}) = 0,
    \label{eq:bc_implement}
\end{equation}
with $n$ ranging over all Fourier modes.

Later, we will prefer to use the vectorized form of \eqref{eq:discFourier}, which we simply write as
\begin{equation}
    \mathsf{A} \mathsf{v} - \mathsf{B}(\mathsf{v},\mathsf{v})  = 0,
    \label{eq:TWeq_vectorized}
\end{equation}
obtained by stacking \eqref{eq:discFourier} on top of itself according to $n = 1,\ldots,N_1$ and subsequent evaluations at \eqref{eq:gauss_lobatto}.
Here, $\mathsf{v} := (\mathrm{vec}(\{ \mathsf{v}_{nm} \}_{n,m}),f,c)$;
$\mathsf{A}\in \mathrm{GL} (\mathbb{C}^N)$ and $\mathsf{B}:\mathbb{C}^N \times \mathbb{C}^N \to \mathbb{C}^N$ (with $N:= 3 N_1 N_2 + 2$ denoting the total dimension of the discretized system) map from coefficient space corresponding to the basis $\{\Phi_n(\tilde{x}_1) T_m (x_2)\}_{n,m}$ into the 'semi-physical' space \eqref{eq:TWeq_vectorized} lives in (evaluated at the collocation points \eqref{eq:gauss_lobatto} -- but still in terms of Fourier coefficients, corresponding to the 'basis'\footnote{Here, $\delta$ denotes the Dirac measure. This is  not a basis of $V$, its only purpose is to describe the 'semi-physical' space.} $\{\Phi_n(\tilde{x}_1) \delta_{x_2}(x_{2,m})\}_{n,m}$).
The additional conditions
\eqref{eq:nondimensional_flux}, \eqref{eq:poincaresect} and \eqref{eq:bc_implement} have all 
been absorbed into $\mathsf{A}$ along with the contribution of $f$, since all of these become linear within the discretized framework.

Equation \eqref{eq:TWeq_vectorized} is solved via standard Newton-Raphson iteration. 
The resulting solution, which we shall denote by $\mathsf{U} = (\mathsf{U}_1,\mathsf{U}_2)$ ($U = (U_1,U_2)$ in physical space),
corresponds to a wave travelling at phase speed $c$.
Figure~\ref{fig:TWs_U1_U2} shows two typical such solutions
$U_s$ and $U_u$
along stable/unstable parts of the 
branch emanating directly from the bifurcation of the laminar state (at $k = 0.85$).
The following section describes how surrounding invariant manifolds 
are computed.

 \begin{figure}
\includegraphics[width=1\textwidth]{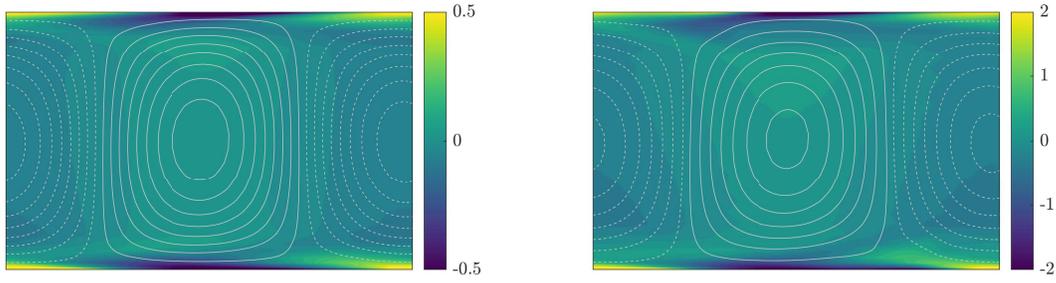}
\centering
\caption{Two travelling wave solutions $ \mathsf{U}_s \approx U_s $ (left) and $ \mathsf{U}_u \approx U_u$ (right) obtained upon solving \eqref{eq:TWeq_vectorized} at $Re = 5440$ and $Re = 5348$, with $k=0.85$ fixed, shown on the periodic domain $[0, 2 \pi / k] \times [-1,1]$. 
Contours show the vorticity; 
lines correspond to level sets of the perturbation stream function.
}
\label{fig:TWs_U1_U2}
\end{figure}

For the purpose of following branches of travelling waves as parameters are varied,
the Newton-Raphson procedure solving \eqref{eq:TWeq_vectorized} is coupled with a pseudo-arclength continuation routine --
Figures~\ref{fig:kcrit_branch}, \ref{fig:main} and~\ref{fig:OB_main} were obtained as such.

\subsection{Computation of the invariant manifold}

We consider the
equation for perturbations about the steady state $U$ obtained in the preceding section,
that reads
\begin{subequations}
\begin{gather}
      \partial_t u = c P_L \partial_{\tilde{x}_1} u + \mathcal{A}_U u -B(u,u) + P_L \tilde{f}, \\
    \langle u_1, \sin (k \tilde{x}_1) \rangle_{L^2(\Omega,\mathbb{R}^2)} = 0, 
\end{gather}\label{eq:NSE_channel_pert}\end{subequations}
with operators $\mathcal{A}_U$ and $B$ as defined in Section~\ref{sect:functional_setup}.
Solutions $u$ are sought in the space $V$ defined in \eqref{eq:V_channelflow}.

The corresponding discretized equation can be written in the form 
\begin{equation}
    \mathsf{M} \frac{d \mathsf{u}}{dt} = \mathsf{A}_\mathsf{U} \mathsf{u} -  \mathsf{B}(\mathsf{u},\mathsf{u}),
    \label{eq:pert_vectorized}
\end{equation}
where 
$\mathsf{u}$ is of the same form as $\mathsf{v}$ above, 
$\mathsf{M}$ is the composition of the map 
\begin{displaymath}
    (\mathrm{vec}(\{\mathsf{u}_1,\mathsf{u}_2,\mathsf{p})_{nm}\}_{n,m},f,c) \mapsto (\mathrm{vec} (\{ \sum_m(\mathsf{u}_1,\mathsf{u}_2,0)_{nm}T_m(x_{2,s})\}_{n,s}),0,0)
\end{displaymath}
with a map that sets the entries with $s=0,N_2$ to $0$ in order to ensure that the boundary conditions enforced in $\mathsf{A}_\mathsf{U}$ are retained;
$\mathsf{B}$ remains the same bilinear contribution as in \eqref{eq:TWeq_vectorized}; $\mathsf{A}_\mathsf{U}$ is the linearization equivalent to $\mathsf{A} - \mathsf{B}(\mathsf{U},\cdot) -\mathsf{B}(\cdot,\mathsf{U})$ (with $\mathsf{A}$ and $\mathsf{B}$ from \eqref{eq:TWeq_vectorized}).

The invariance equation (as developed in Section~\ref{sect:parameterization})
in the context of \eqref{eq:pert_vectorized} reads
\begin{equation}
   \mathsf{M} DK(\cdot) [R(\cdot)] = \mathsf{A}_\mathsf{U} K(\cdot) -  \mathsf{B}(K (\cdot ), K( \cdot))
   \label{eq:invariance_channel}
\end{equation}
for unknowns $K : \Theta \to \mathbb{C}^N$, the parameterization, and $R : \Theta \to \Theta$, the reduced dynamics.

In the examples to follow, we encounter 
two types of sets $\Sigma_1 \subset \sigma (\mathsf{A}_\mathsf{U})$ for which we wish to compute invariant manifolds: either $\Sigma_1$ is a simple real eigenvalue, or $\Sigma_1$ contains a finite number of complex eigenvalues. 
Typical spectra $\sigma (\mathsf{A}_\mathsf{U})$ within the travelling wave configuration are shown in Figure~\ref{fig:spectrum_U1_U2}, with $\Sigma_1$ comprising one real simple eigenvalue -- in this case, we would choose the model space $\Theta$ to be an interval of $\mathbb{R}$ containing $0$.
Otherwise, if $\Sigma_1$ consists of a few complex eigenvalues, we take for the model space $\Theta$ to be a hypercube in $\mathbb{C}^r$ containing $0$,
where $r = \mathrm{dim}_{\mathbb{C}}(\mathrm{im}(P_{\Sigma_1}))$,
 with coordinates $\theta = (\theta_1,\ldots,\theta_r)$.
 
 \begin{figure}
\includegraphics[width=1\textwidth]{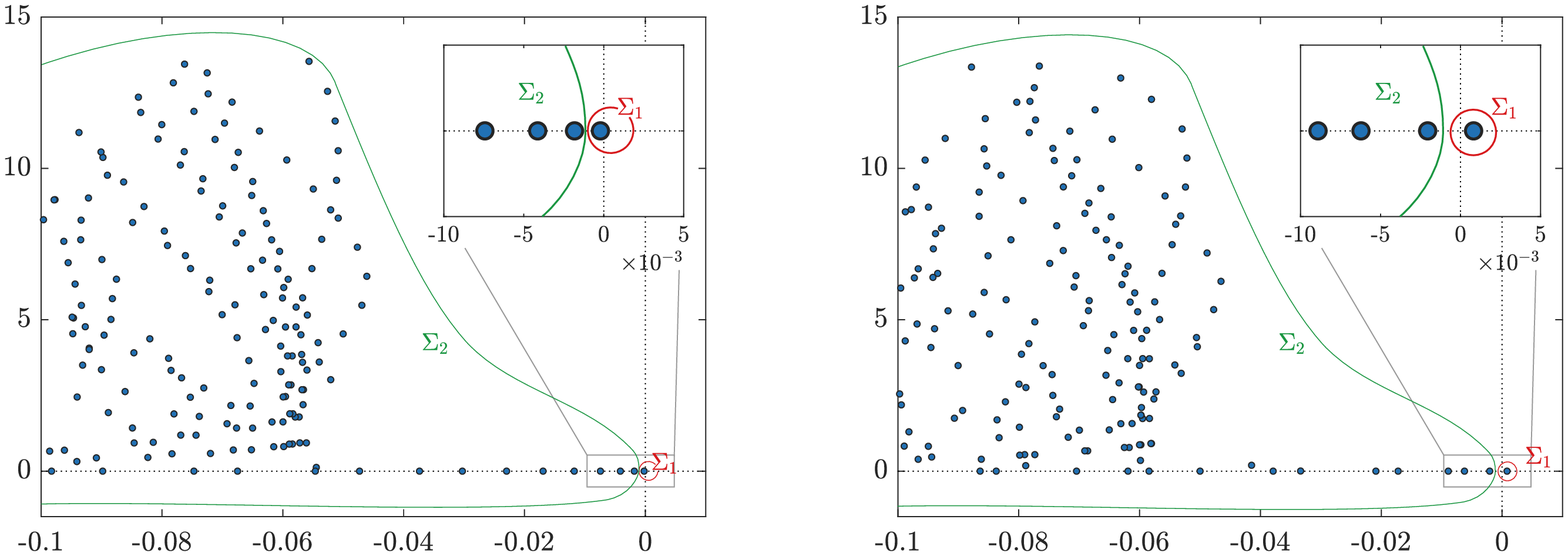}
\centering
\caption{Spectrum of the linearized operators $\mathsf{A}_{\mathsf{U}_s}$ (left) and $\mathsf{A}_{\mathsf{U}_u}$ (right) about travelling wave solutions $\mathsf{U}_s$ and $\mathsf{U}_u$ (as shown in Figure~\ref{fig:TWs_U1_U2}), partitioned according to \eqref{eq:sugma}. 
}
\label{fig:spectrum_U1_U2}
\end{figure}
 
We seek the unknowns in the form of power series expansions
\begin{displaymath}
    K(\theta) = \sum_{j \geq 1} K_j(\theta); \qquad R(\theta) = \sum_{j \geq 1} R_j(\theta),
\end{displaymath}
with $K = (K^1,\ldots,K^N)$ and $R = (R^1,\ldots,R^r)$. Their components are given by
\begin{displaymath}
   K_j^i (\theta)= \sum_{ | \alpha | = j} K^i_{j,\alpha} \theta^\alpha, \qquad i= 1,\ldots,N,
\end{displaymath}
and
\begin{displaymath}
   R_j^i (\theta)= \sum_{ | \alpha | = j} R^i_{j,\alpha} \theta^\alpha, \qquad i= 1,\ldots,r,
\end{displaymath}
where $\alpha \in \mathbb{N}^r$ denotes a multi-index such that $| \alpha | = \alpha_1 + \cdots + \alpha_r$ and $\theta^\alpha = \theta_1^{\alpha_1}\cdots \theta_r^{\alpha_r}$;
the coefficients are real or complex numbers depending on the setup.

At the first order ($j = 1$), \eqref{eq:invariance_channel} returns the eigenvalue problem
\begin{displaymath}
    \mathsf{M} K_1 \circ R_1 = \mathsf{A}_\mathsf{U} K_1,
\end{displaymath}
as discussed below \eqref{eq:firstorder}.
We choose $R_1 \in \mathbb{C}^{r \times r}$ to be the Jordan normal form of $\mathsf{A}_\mathsf{U}P_{\Sigma_1}$ and $K_1$ to be the the $\mathbb{C}^{N \times r}$ matrix composed of eigenvectors (and generalized eigenvectors) corresponding to $\Sigma_1$. 
The $(i,k)-$th entry of these matrices defines the coefficient $R^i_{1,e_k}$ (and $K^i_{1,e_k}$), where $e_k = (0,\ldots,1,\ldots,0)$ with $1$ in the $k-$th entry.

At higher orders ($j \geq 2$), \eqref{eq:invariance_channel} yields an algebraic set equations
 for the coefficients $K^i_{j,\alpha}$ and $R^q_{j,\alpha}$
\begin{equation}
   \mathsf{A}_{\mathsf{U},i}^\ell K^i_{j,\alpha} - \mathsf{M}^\ell_i  K^i_{j,\beta^k}  \alpha_q R^q_{1,e_k} - \mathsf{M}^\ell_i K^i_{1,e_q} R^q_{j,\alpha} = \eta_{j,\alpha}^\ell, \qquad |\alpha| = j,  
   \label{eq:invariance_index}
\end{equation}
with $\beta^k = \alpha - e_k$\footnote{If $\beta^k$ has a negative entry, we set $K^i_{j,\beta^k} = 0$.} and $\eta_j$ as in \eqref{eq:higherorders}:
\begin{equation}
   \eta_{j,\alpha}^\ell  = \left( \sum_{r = 1}^{j-1} \mathsf{B}(K_{j-r}(\cdot),K_{r}(\cdot)) + \mathsf{M}  \sum_{k = 2}^{j-1} D K_{j-k+1}(\cdot)[R_k(\cdot)] \right)_\alpha^\ell.
   \label{eq:eta_j_alpha}
\end{equation}
Note that if $R_1$ is diagonalizable, i.e., of the form
\begin{displaymath}
   \begin{pmatrix}
        \lambda_1 &  &\\
       & \ddots & \\
       & & \lambda_r 
   \end{pmatrix},
\end{displaymath}
then \eqref{eq:invariance_index} reduces to
\begin{equation}
 \left( \mathsf{A}_{\mathsf{U},i}^\ell  - \alpha_q \lambda_q \mathsf{M}^\ell_i \right) K^i_{j,\alpha}   - \mathsf{M}^\ell_i K^i_{1,e_q} R^q_{j,\alpha} = \eta_{j,\alpha}^\ell, \qquad |\alpha| = j,  
  \label{eq:invariance_index_diagable}
\end{equation}
which is block-diagonal in the coefficients $K^i_{j,\alpha}$,
and hence, when solved via a numerical algorithm, can be parallelized over different choices of $\alpha$.

With the numerical procedure, we follow the approach of \cite{shobhit2022}.
In particular, we avoid further diagonalizing the complementary part of $\mathsf{A}_{\mathsf{U}}$, as full-fledged eigenvalue computations are often times the bottleneck of large scale problems (and fluid flows certainly fall into this category).
To compute $R^q_{j,\alpha}$ in the \textit{normal form style} of parameterization, 
\cite{shobhit2022} derive a solvability condition
in the vein of the Fredholm alternative.
Namely, \eqref{eq:invariance_index_diagable}\footnote{If $R_1$ is non-diagonalizable, one can do the same procedure to the full system, rather than the index-by-index approach shown here.} is solvable if and only if
\begin{displaymath}
  \eta_{j,\alpha} + \mathsf{M}_i K^i_{1,e_q} R^q_{j,\alpha} \in  \mathrm{im}\left( \mathsf{A}_{\mathsf{U}}-\alpha_q \lambda_q \mathsf{M} \right) = \left(\mathrm{ker}\left( \mathsf{A}_{\mathsf{U}}-\alpha_q \lambda_q \mathsf{M} \right)^* \right)^\perp
\end{displaymath}
for all $|\alpha| = j$, i.e., if for each $\zeta \in \mathrm{ker}\left( \mathsf{A}_{\mathsf{U}}-\alpha_q \lambda_q \mathsf{M} \right)^*$,
\begin{equation}
 \left\langle \zeta,  (\eta_{j,\alpha} + \mathsf{M}_i K^i_{1,e_q} R^q_{j,\alpha}) \right\rangle_{\mathbb{C}^N}  = 0
 \label{eq:Fredholm}
\end{equation}
holds over all $|\alpha| = j$. 
In fact, it is sufficient to check \eqref{eq:Fredholm} over $\zeta \in \mathrm{ker}\left( \mathsf{A}_{\mathsf{U}}-\alpha_q \lambda_q \mathsf{M} \right)^* \cap \mathrm{im}(\mathsf{M})$, since $\eta_{j,\alpha} \in \mathrm{im}(\mathsf{M})$ by \eqref{eq:eta_j_alpha}.
Note that $\mathrm{ker}\left( \mathsf{A}_{\mathsf{U}}-\alpha_q \lambda_q \mathsf{M} \right)^*$ is nonempty if and only if $\alpha_q {\lambda}_q \in \sigma (\mathsf{A}_{\mathsf{U}}) = \Sigma_1 \cup \Sigma_2$. 

If $\alpha_q {\lambda}_q \in \Sigma_1$, we have a case of internal resonance in the sense of \eqref{eq:internal_resonance}.
In this case, for any $\zeta \in \mathrm{ker}\left( \mathsf{A}_{\mathsf{U}}-\alpha_q \lambda_q \mathsf{M} \right)^* \cap \mathrm{im}(\mathsf{M})$, there exists $k \in \{1,\ldots,r\}$ such that $\langle \zeta, \mathsf{M} K_{1,e_k} \rangle_{\mathbb{C}^N}$ is nonzero, 
and hence, after setting
\begin{displaymath}
   R_{j,\alpha}^k = -\frac{\zeta^\ell \eta_{j,\alpha}^\ell}{\zeta^\ell  \mathsf{M}_i^\ell K^i_{1,e_k}}, \qquad \text{and} \qquad R_{j,\alpha}^q = 0   \qquad \text{for} \; q \in \{1, \ldots , r \} \setminus \{ k\},
\end{displaymath}
 \eqref{eq:Fredholm} is satisfied.

If, on the other hand, $\alpha_q \lambda_q \in \Sigma_2$, we have a case of cross resonance in the sense of \eqref{eq:cross_resonance}, and the system \eqref{eq:invariance_index_diagable} is not solvable by any means, 
as $\langle \zeta, \mathsf{M} K_1 \rangle_{\mathbb{C}^N} = 0$ for any $\zeta \in \mathrm{ker}\left( \mathsf{A}_{\mathsf{U}}-\alpha_q \lambda_q \mathsf{M} \right)^* \cap \mathrm{im}(\mathsf{M})$. 

Provided that the outer nonresonance conditions \eqref{eq:cross_resonance} hold, the above procedure ensures solvability of \eqref{eq:invariance_index_diagable}, and all remaining coefficients $R^q_{j,\alpha}$ can be set to zero. 
The resulting reduced dynamics are hence as simple as possible, which marks the objective of the normal form style.
In the \textit{graph style}, the embedding $K$ is simplified instead by choosing
the $K_{j,\alpha}^i$ such that $\mathrm{im}(K_{j,\alpha}) \subset \left(\mathrm{im}(K_1) \right)^\perp$, and adjusting $R^q_{j,\alpha}$ such that
\begin{displaymath}
   - \mathsf{M}^\ell_i K^i_{1,e_q} R^q_{j,\alpha} = \eta_{j,\alpha}^\ell
\end{displaymath}
holds.

Within both styles of parameterization, the remainder of coefficients $K_{j,\alpha}^i$ are obtained upon solving the nontrivial part of the linear system \eqref{eq:invariance_index_diagable}.

\subsection{Invariant manifolds about travelling wave solutions}
\label{sect:example_main}

In this section, the above
routine is applied to the bifurcation branch problem initiated in Section~\ref{sect:example_lam}, albeit at a slightly different wavenumber $k = 0.85$ (held fixed).
For the discretization, we choose $N_2 = 30$ Chebyshev and $N_1 = 15$ Fourier modes.
We trace down the solution branch emanating from the laminar stability transition at $Re = 5396$, along which both $U_s$ and $U_u$ of Figure~\ref{fig:TWs_U1_U2} appear.
By computing invariant manifolds about these states, the aim is to construct 
heteroclinic connections between different segments of the branch.

The new choice of wavenumber is motivated by the slightly unusual, complex behaviour
the branch exhibits at $k=0.85$ (see Figure~\ref{fig:main} top-right panel, in black).
The initial Hopf bifurcation (of the laminar state) at $Re \approx 5396$ is supercritical, and hence, the initial segment of the travelling wave branch is stable until reaching a saddle-node (fold) bifurcation at $Re \approx 5588$.
From then on, the branch continues to evolve in the expected 
fashion -- with a lower branch of unstable saddles, an upper branch of stable nodes
and a fold bifurcation in-between, at $Re \approx 5312$.

\begin{figure}
\includegraphics[width=1\textwidth]{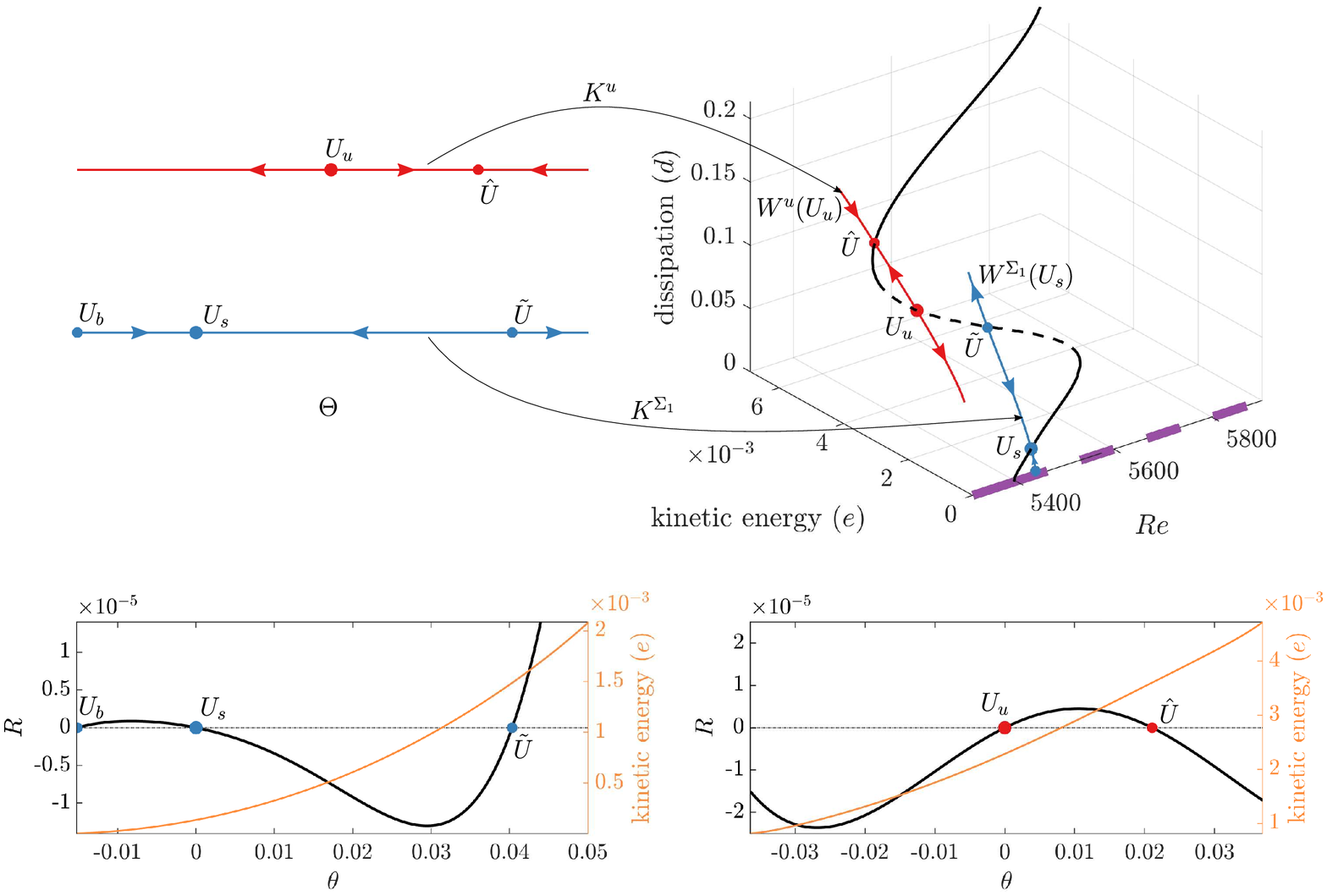}
\centering
\caption{Parameterization of invariant manifolds around the branch of travelling waves at $k = 0.85$. (Top-left) The spaces $\Theta$ on which the invariant manifolds are modeled. (Top-right) The manifolds $W^{\Sigma_1}(U_s) = \mathrm{im}(K^{\Sigma_1})$ and $W^u(U_u) = \mathrm{im}(K^{u})$ embedded in a two-dimensional phase space spanned by $e$ and $d$, along with the full branch of solutions.
Stability of states along the branch is indicated via solid (stable) and dashed (unstable) lines.
(Bottom) The reduced dynamics corresponding to $W^{\Sigma_1}(U_s)$ (left) and $W^u(U_u)$ (right).
}
\label{fig:main}
\end{figure}

 As measures of amplitude, we have chosen the perturbation kinetic energy \eqref{eq:pert_kin_e}
 and the square-root dissipation (motivated by \cite{kaszas2022})
 \begin{displaymath}
    d = \Vert u \Vert_V = \left( \int_{\Omega} | \nabla u|^2  \right)^{\frac{1}{2}}
 \end{displaymath}
 of the perturbation velocity $u = v - U_{\mathrm{lam}}$ above the laminar state.
 
At two points $U_s$ and $U_u$ along the stable/unstable parts of the branch, 
invariant manifolds $W^{\Sigma_1}(U_s)$ (slow-stable) and $W^u(U_u)$ (unstable) were computed via the graph style of parameterization up to order $7$
-- these are shown embedded in $e-d$ space alongside the branch in the top-right panel of Figure~\ref{fig:main}. 
The reduced, $1$-dimensional dynamics $R$ on the model space $\Theta = [-1,1]$ are shown on the left panel. 
The embedding $K$ transfers the dynamics over into the full phase space, and hence the exact dynamics are obtained along the invariant manifold in the vicinity of the travelling wave solution -- or, in some subcases, even up to the nearest exact coherent state.

Moreover, the obtained local dynamics about $U_s$ (resp.\ $U_u$)  
are representative of all nearby solutions,
since these are (locally) attracted towards $W^{\Sigma_1}(U_s)$ (resp.\ $W^{u}(U_u)$) along the remaining, stable directions at some rate $\mathcal{O}(e^{\beta t})$. 
Locally, around $U_s$ ($U_u$), these 'stable directions' are represented by the foliation of Theorem~\ref{thm:PO}, and
the decay rate is specified by Theorem~\ref{thm:PO}\ref{thm:PO_item2} as $\mathcal{O}(e^{\beta t})$ for any $\beta > \lambda_2$
with $\lambda_2$ the least stable eigenvalue of $\Sigma_2 \subset \sigma ( \mathsf{A}_{\mathsf{U}_s} ) $ (resp.\ $\Sigma_2 \subset \sigma ( \mathsf{A}_{\mathsf{U}_u} ) $).
In particular, unstable manifold $W_{u}(U_u)$ isolates 
 the optimal (fastest) way for the flow state to
grow towards more complex solutions (the upper branch)
or descend back towards the laminar state.
Its slow-stable counterpart at $U_s$, $W^{\Sigma_1}(U_s)$ isolates the slowest route for trajectories to approach $U_s$.

With $\tilde{U}$ and $\hat{U}$ as in Figure~\ref{fig:main},
we have that that $W^u(\tilde{U}) = W^{\Sigma_1}(U_s)$ on the arc $[U_s,\tilde{U}]$,
based on the discussion in Section~\ref{sect:example_lam} (also, $W^u(U_u) = W^{\Sigma_1}(\hat{U})$ on $[U_u,\hat{U}]$). 
In general, we observe that slow stable manifolds along stable parts of the branch and unstable manifolds along its unstable segments contain nearby exact coherent states.

Whether or not these
heteroclinic connections are actually observable along 
the computed manifolds (c.f.\ Figure~\ref{fig:main})
comes down to the accuracy of the numerical procedure at the distance of the nearby state.
A relevant concept is the \textit{fundamental domain} of the manifold -- the domain on which the power series expansions satisfy the invariance equation sufficiently well.
This can be quantified upon defining an error over $\theta$, see Section~2.2.5 of \cite{haro2016} for a few recommendations (assuming that full trajectories of the system can be computed).
Here, integration of full-fledged solutions would be unfeasible, and hence we are forced to resort to a simplified approach, and use
\begin{displaymath}
   \mathrm{err}(\theta) =  \left| \sum_{j \geq 2} \eta_j(\theta) \right|
\end{displaymath}
as the error norm, with $\eta_j$ as in \eqref{eq:higherorders}.
The fundamental domain $\Theta_f$ is defined as 
\begin{displaymath}
    \Theta_f : = \left\{ \theta \in \Theta \; \big| \; \mathrm{err}(\theta) < \mathrm{tol} \right\}
\end{displaymath}
for some user-specified tolerance $\mathrm{tol}$ (set to $1.5 \cdot 10^{-2}$ for the manifolds in Figure~\ref{fig:main}) that represents 'sufficient accuracy' for the system in question -- usually determined via trial and error\footnote{As the latter suggests, this approach is rather fiddly, and hence, whenever possible, we recommend using those outlined in \cite{haro2016} instead.}.
The manifolds displayed in Figure~\ref{fig:main} correspond to $K (\Theta_f)$, and hence may or may not contain nearby states depending on their distance from the fixed point (about which the manifold was computed) -- for instance, $W^u(U_u)$ does not contain the laminar state $U_b$.
Note that this is solely a feature of the numerical procedure, and not that of the full system. 

The fundamental domain can be enlarged to some degree (with $\mathrm{tol}$ fixed), by increasing the order up to which the expansions are computed, but the improvements will saturate as higher orders are reached.
From then on, the full extent of the manifolds could be recovered via numerical integration 
(e.g.\ $U_b$ could be reached upon forward integrating $W^u(U_u)$) 
-- however, we will not perform this procedure here due to its computational costs.

\subsection{Elastic instability of an Oldroyd-B fluid}
\label{sect:example_visco}

For the final example, we consider pressure-driven viscoelastic channel flow (in two spatial dimensions), motivated by the recent advancements in the field \cite{Miguel2022}. 
We assume that the evolution of the fluid flow is governed by the Oldroyd-B model\footnote{Note that we have not yet shown the existence of spectral submanifolds for the case of the Oldroyd-B model. We expect them to retain their existence due to the structure of \eqref{eq:OB}, but this example is more meant as an extrapolation towards possible future directions.}
\begin{subequations} \label{eq:OB}
\begin{gather}
    Re \Big[ \partial_t u + \left( u \cdot \nabla \right) u \Big]+ \nabla p = \beta \Delta u +  \nabla \cdot T + \begin{pmatrix}
    f \\ 0
    \end{pmatrix},  \\
    \nabla \cdot u =0, \\
    \partial_t T +  \left( u \cdot \nabla \right) T + \frac{1}{Wi} T = T \cdot \nabla u +   \left( \nabla u \right)^{T} \cdot T +\frac{(1-\beta)}{Wi} \left( (\nabla u)^T + \nabla u \right) +\varepsilon \Delta T. 
\end{gather}
\end{subequations}
Here $T $ is the ($2 \times 2$) tensor representing the polymer stress field and
$\beta:= \nu_s/\nu \in [0,1]$ denotes the viscosity ratio where $\nu_s$ and $\nu_p=\nu-\nu_s$ are the solvent and polymer contributions to the total kinematic viscosity $\nu$. 
The viscoelastic fluid is assumed to occupy the same (periodic) domain $\Omega = \left( 0, 2 \pi / k \right) \times \left( -1,1 \right)$ as in the prior examples.
The dimensionless form of \eqref{eq:OB} was achieved via the same route as in Section~\ref{sect:channel_setup}, upon implementing
the Reynolds number \eqref{eq:Reynolds} and the Weissenberg number
$$
Wi = \tau Q,
$$
where $\tau$ is the polymer relaxation time and $Q$ is the volume flux \eqref{eq:volflux_lamstate}.

Equation \eqref{eq:OB} is supplemented with non-slip boundary conditions on the velocity field.
To treat the stress tensor $T$, we set $\varepsilon = 0$ at the walls of the channel (i.e.\ $x_2 = \pm 1$).
In the streamwise ($x_1$) direction, periodic boundary conditions are imposed on both $u$ and $T$, as in Section~\ref{sect:channel_setup}.

The study of viscoelastic flows forms a very active field within the applied fluids literature,
which hence pose a multitude of novel challenges. 
In particular, there are two new linear instabilities that persist even in the inertialess ($Re = 0$) limit: the center-mode instability discovered by Garg et al.~\cite{Garg2018} and the wall-mode instability of Beneitez et al.~\cite{Miguel2022}.
The central question puzzling the field
is whether these instabilities could be dynamically connected
to nearby (in parameter space) turbulent states known as elastic \cite{Steinberg2021rev} and elasto-inertial turbulence \cite{Samanta2013EIT}.
While this question remains inconclusive
for the case of the center-mode instability, the wall-mode has already produced promising results.
In the work of Beneitez et al.~\cite{Miguel2022}, it is shown that this latter instability
eventually evolves into what is perceived to be elastic turbulence in plane-Couette flows, and similar results have been confirmed for the case of channel flows by the same authors \cite{Miguel2023}.

If this is indeed the case, the exact coherent states
determined by the branch
emanating from the laminar bifurcation 
become of particular interest, as they would serve to
explain
the (laminar-turbulent) transition process.
The initial segment of this branch, however, behaves in a very stiff manner as parameters are varied, and hence standard techniques to initialize the branch continuation (e.g., weakly nonlinear analysis) become almost impossible to use.

\begin{figure}
\includegraphics[width=0.5\textwidth]{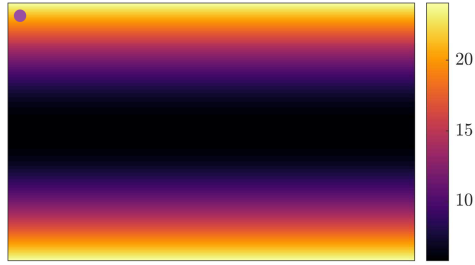}
\centering
\caption[Laminar state of Oldroyd-B fluid]{The laminar state $\Psi_b$ of an Oldroyd-B fluid at $(Wi,\beta,\varepsilon,Re) = (13.6,0.9,10^{-2},0)$. 
Colored contours correspond to $\mathrm{tr} \, T_b $. 
The base velocity field $U_b$ remains almost equal to that of Figure~\ref{fig:lam}, with only minor adjustments due to the $\varepsilon \Delta T$ term.
}
\label{fig:OB_laminar}
\end{figure}

Here, we use spectral submanifolds (as in Section~\ref{sect:example_lam}) as an alternative route of initialization, which does not even rely on the existence of a nearby laminar bifurcation.
While doing so, we also explore an elementary transition process between two segments of the branch connected to the wall-mode instability of \cite{Miguel2022}.
Let us fix $(\beta,\varepsilon,Re) = (0.9,10^{-2},0)$, a fairly standard set of parameters to be considered in the inertialess limit.
For the discretization, we choose $N_2 = 50$ Chebyshev and $N_1 = 10$ Fourier modes.
The laminar state $\Psi_b = (U_b,T_b)$ 
(Figure~\ref{fig:OB_laminar})
bifurcates at $Wi \approx 13.5$
for a critical streamwise wavenumber of $k = 2.3$.
The spectrum of the linearized operator about $\Psi_b$ is shown in Figure~\ref{fig:OB_spect}.

\begin{figure}
\includegraphics[width=0.5\textwidth]{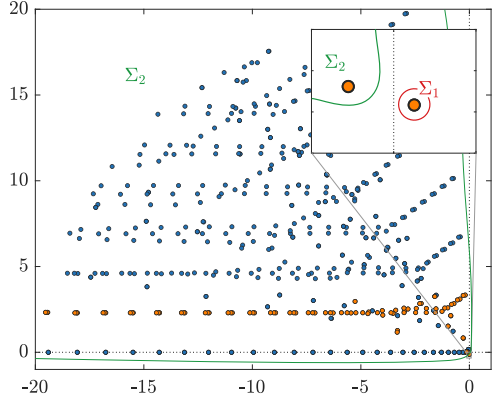}
\centering
\caption[Spectrum of Oldroyd-B fluid]{Spectrum of the differential operator corresponding to \eqref{eq:OB} linearized about the base state $\Psi_b$ shown on the complex plane, partitioned according to \eqref{eq:sugma}. 
As in Figure~\ref{fig:spectrum_lam}, the spectrum obtained via the standard, single Fourier mode eigenvalue problem is highlighted in orange.
}
\label{fig:OB_spect}
\end{figure}

We compute a parameterization $K$ for the unstable manifold of $\Psi_b$ slightly beyond the bifurcation, at $Wi = 13.6$, to sixth order (Figure~\ref{fig:OB_last}). 
The resulting reduced model on $\Theta$ reads
\begin{subequations}
\begin{align}
    \dot{r} &= 10^{-3} \left( 0.249r -0.160r^3 +0.0147r^5 \right),\\
    \dot{\vartheta} &= 0.119 + 0.0005 r^2 +0.000006 r^4.
\end{align}\label{eq:OB_red_dyn}\end{subequations}
The two nontrivial invariant sets of the reduced dynamics \eqref{eq:OB_red_dyn} are  $ \Gamma_1(t) =\{(r,\vartheta(t)) \; \vert \; r = r_1 \}$ and  $ \Gamma_2(t) =\{(r,\vartheta(t)) \; \vert \; r = r_2 \}$, with $r_1 \approx 1.37$ and $r_2 \approx 3.01$.
The corresponding limit cycles of the full system are obtained as $\Psi_1 = K(\Gamma_1)$ (stable) and $\Psi_2 = K(\Gamma_2)$ (unstable), shown via blue and red (respectively) in Figure~\ref{fig:OB_last}.
In place of the usual measure of kinetic energy, we shall use
\begin{equation}
    \mathcal{T} := \frac{\int_\Omega \mathrm{tr} \, T}{\int_\Omega \mathrm{tr} \, T_b}
    \label{eq:OB_T}
\end{equation}
instead to visualize the embedding (Figure~\ref{fig:OB_last}), representing the relative elastic energy of the fluid state.

\begin{figure}
\includegraphics[width=1\textwidth]{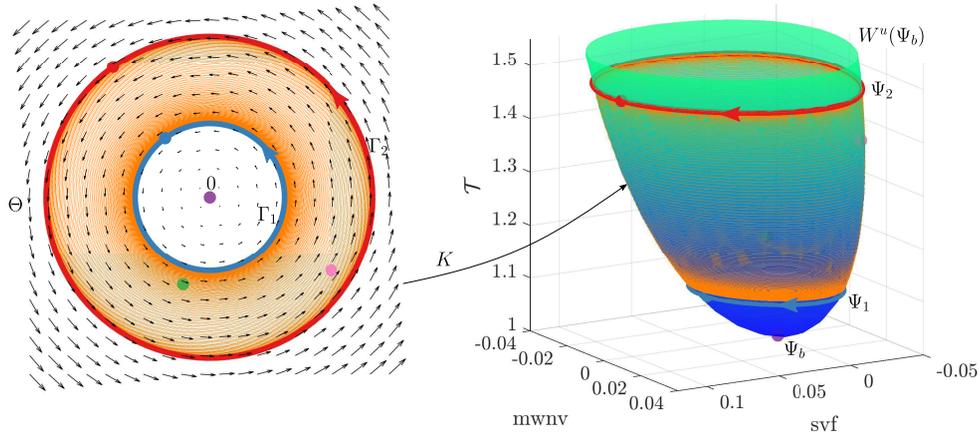}
\centering
\caption[Unstable manifold of Oldroyd-B fluid]{Parameterization of the unstable manifold $W^u(\Psi_b) = W^{\Sigma_1}(\Psi_b)$ about the laminar state. 
(Left) The model space $\Theta$ along with its reduced dynamics described by $R$. (Right) The manifold $W^u(\Psi_b)= \mathrm{im}(K)$ embedded in a three-dimensional phase space spanned by $\mathcal{T}$ \eqref{eq:OB_T}, $\mathrm{mwnv}$ \eqref{eq:mwnv} and $\mathrm{svf}$ \eqref{eq:svf}.
In this example, we choose $\hat{x}_2$ in the definition of $\mathrm{svf}$ to be the $10-$th Gauss-Lobatto point $x_{2,10}$, as it shows far larger fluctuations than the centerline velocity.
A single trajectory traversing between the two limit cycles $\Gamma_1$ and $\Gamma_2$ (resp.\ $\Psi_1$ and $\Psi_2$) is shown in orange on both panels.
}
\label{fig:OB_last}
\end{figure}
 
Next, we trace down the evolution of a single trajectory 
describing the transition between the two limit cycles.
For this, we take an initial condition on $\Theta$ just inside $\Gamma_2$, with initial radius $r_1 \ll r_0 <r_2$ and initial angle $\vartheta_0$, then integrate \eqref{eq:OB_red_dyn} to obtain its orbit $\mathcal{O}^+(\{r_0,\vartheta_0\})$.
The resulting trajectory $\mathcal{O}^+(\{r_0,\vartheta_0\})$ and its counterpart  $K(\mathcal{O}^+(\{r_0,\vartheta_0\}))$ in the full system are highlighted in orange on the left and right panels of Figure~\ref{fig:OB_last} respectively.
Snapshots of its evolution, taken at four points denoted via distinctly colored dots in Figure~\ref{fig:OB_last}, are shown in Figure~\ref{fig:OB_evolution}.

\begin{figure}
\includegraphics[width=1\textwidth]{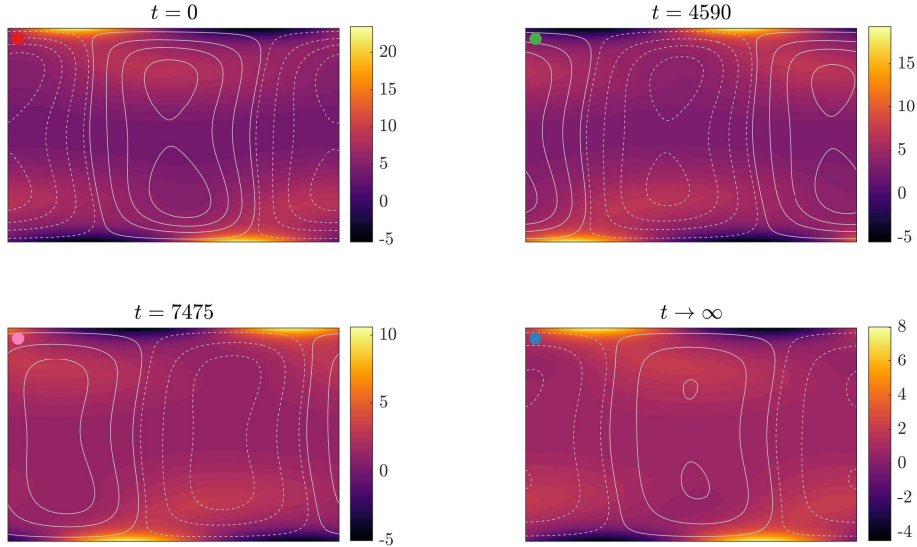}
\centering
\caption[Evolution of Oldroyd-B fluid]{Snapshots along the evolution of the orange trajectory of Figure~\ref{fig:OB_last}. 
Colored contours show $\mathrm{tr} \, (T-T_b)$, the trace of the perturbation polymer stress tensor. 
Lines correspond to level sets of the perturbation stream function.
}
\label{fig:OB_evolution}
\end{figure}

With the nontrivial states in hand, we are now in a position to initiate the branch continuation routine from $Wi = 13.6$.
After fixing a Poincaré section and imposing condition \eqref{eq:poincaresect} onto the states $\Psi_1$ or $\Psi_2$, we reach the travelling wave formulation of Section~\ref{sect:TWformulation}.
Figure~\ref{fig:OB_main} shows the branch launched from $\Psi_1$ in the direction of increasing $Wi$. 
The shape of the branch confirms the initial supercriticality of the Hopf bifurcation of the laminar state at $Wi \approx 13.5$. 
A saddle-node bifurcation follows shortly thereafter, resulting in an unstable branch of edge states reaching to significantly lower $Wi$.
Note that, due to its adequate convergence (see caption of Figure~\ref{fig:OB_main}), we could have initiated the branch continuation routine directly from $\Psi_2$ as well to reach the same conclusions.

\begin{figure}
\includegraphics[width=0.5\textwidth]{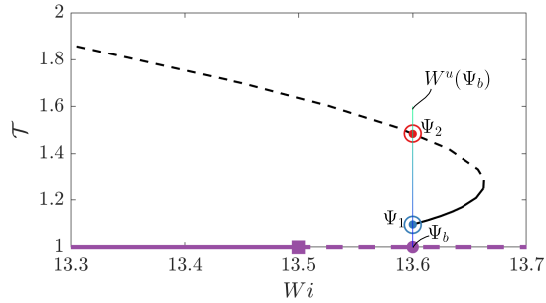}
\centering
\caption[Bifurcation branch of Oldroyd-B fluid]{
Bifurcation diagram in the close vicinity of 
where the wall-mode instability of \cite{Miguel2022} emerges.
The point at which the laminar state undergoes a (supercritical Hopf) bifurcation 
is denoted by a purple square symbol ($Wi \approx 13.5$ for a critical wavenumber of $k=2.3$).
Circles are centered at the states predicted on the unstable manifold $W^u(\Psi_b)$ (also shown via the same colors in Figure~\ref{fig:OB_last}); full dots denote the corresponding converged states via the Newton-Raphson iteration of the continuation routine.
}
\label{fig:OB_main}
\end{figure}

\section{Discussion}
\label{sect:discussion}

In this work, we have 
shown the existence of a generalized class of invariant manifolds (termed spectral submanifolds) about fixed points and periodic orbits 
in the phase space $V$ 
of the Navier-Stokes equations.
Notably, for any $\beta \in \mathbb{R}$ partitioning the spectrum into two disjoint components $\Sigma_1$ (containing elements with real parts larger than $\beta$) 
and $\Sigma_2$ (resp.\ smaller), 
we have a local nonlinear continuation of the spectral subspace $P_{\Sigma_1}V$ 
invariant under the semiflow, and a foliation identifying 
synchronized trajectories
that attract each other at a rate of $O(e^{\beta t})$ (Theorem~\ref{thm:summary}). 
Moreover, we have classified the subscenarios in which 
uniqueness and a higher degree of smoothness of these structures is to be expected. 

Alongside this paper, a computational tool was developed to accommodate spectral submanifold computations for the case of the Navier-Stokes and Oldroyd-B systems.
The algorithm obtains said manifolds via power series expansions along the lines of the parameterization method \cite{Cabre2003a,haro2016},
built on the well-established and by now properly optimized code of \cite{shobhit2021code,shobhit2022} from the engineering mechanics literature. 
Our adapted version is available at \url{https://github.com/gergelybuza/SSM_fluids}, written in Matlab.

Through the course of a few examples within the framework of channel flows,
we have demonstrated some 
avenues 
in which spectral submanifolds could serve as useful tools aiding our understanding 
of the dynamical structure of phase space.
The range of applications 
is by no means exhausted, and we anticipate 
that further directions will be explored in the near future.
Even if we merely utilize spectral submanifolds for the computational benefits they provide as reduced order models,
the above examples already show promising qualities.
Indeed, all computations shown herein were done in a matter of minutes, with peak memory consumption kept below $32$GB.
The latter only occurs during the assembly of the bilinear term (which could be stored as it is independent of the problem at hand), otherwise the memory usage remained below $16$GB, suitable for use on an average laptop. 
We acknowledge, nonetheless, that there is still a lot to be improved.

The bottleneck, 
for the most part, is not computational speed, but rather the immense memory 
requirements posed by storing all necessary polynomial coefficients. 
There are a few recently-emerged techniques
that attempt to mitigate these costs 
via reducing the number of coefficients needed to be stored.
For instance, an alternative form of discretization is encouraged in \cite{Gonzalez2022paramfem} for parabolic PDEs, the finite element method.
While this seems advantageous for most applications due to the sparsity of coefficient matrices, finite elements generally require significantly higher resolutions when compared to Chebyshev-Fourier methods in the specific context of fluid flows (especially wall-bounded flows), which would likely diminish the advantages of sparsity.
Alternatively, differing routes to solving the invariance equation could serve advantageous, as demonstrated by the works of Roberts et al.~\cite{roberts2014dynamical,roberts2014book} and Vizzaccaro and Opreni et al.~\cite{vizzaccaro2022high,opreni2023high}.
Another, perhaps more enticing route is to employ data-driven techniques. 
These have proven to be incredibly efficient  in engineering mechanics \cite{Cenedese2022mech,Cenedese2022}, and now in plane-Couette flows as well \cite{kaszas2022}.
In particular, the authors in \cite{kaszas2022} were able to tackle systems even larger than those considered here, 
facilitating a possible transition to the realm of industrial-scale applications.

\appendix
\section{Proof of Proposition~\ref{prop:semiflow_summary}} 
\label{appendix}
In this appendix, we show the existence and smoothness of a local semiflow generated by the (perturbation) Navier-Stokes system \eqref{eq:main} on $V$.
We reiterate that these results are not novel and the majority of them are already given in the works of Weissler~\cite{WEISSLER1979,Weissler1980TheNI}.

We first recall a local existence result
which in essence is given in \cite{WEISSLER1979} (Theorem 2.1; see also \cite{Weissler1980TheNI}). 
In anticipation of the smoothness considerations in the latter half of this appendix, we shall state this result in a more general setting, allowing for non-autonomous generators.

Non-autonomous semiflows are defined similarly to \eqref{eq:semiflow}, with the dependence on the initial time $t_0 \geq 0$ included, i.e., \eqref{eq:semiflow1} and \eqref{eq:semiflow2} are replaced with
\begin{align*}
    \varphi_{t_0}^{t_0} &= I, \\ 
    \varphi_s^t \circ \varphi^s_{t_0} &= \varphi_{t_0}^t, \qquad \forall t \geq s \geq t_0.
\end{align*}

\begin{proposition} \label{prop:exist}
Let $\mathcal{A}_U$ be as in \eqref{eq:AU}
and let $J_t: V \to L_\sigma^p$ be a locally Lipschitz map with Lipschitz constants locally uniform with respect to $t$, for $t \in [0,t^*)$ ($t^*$ possibly infinite) and $\frac{2d}{d+2} < p < \infty$.
Assume further that $(t,v) \mapsto J_t(v)$ is continuous 
and that $J_t (0) = 0$.
Then there exists a unique local semiflow $\varphi_{t_0}^t$ on $V$ over the interval $[0,t^*)$ defined by
\begin{equation}
    \varphi_{t_0}^t (u_0) = e^{(t-t_0) \mathcal{A}_{U} } u_0  + \int_{t_0}^t e^{(t-s)\mathcal{A}_{U} } J_s \circ \varphi_{t_0}^s (u_0) \, ds, \qquad 0 \leq t_0 < t < t^*,
    \label{eq:prop11main}
\end{equation}
such that the following hold:
\begin{enumerate}[label=\upshape{(\roman*)}]
    \item For each $T>0$ and $r>0$, there exists a $\delta >0$ such that $\mathrm{Lip}_V \big(\varphi_{t_0}^t \, | \, \overline{B_r(0)} \big) \leq C_{r,T}$ holds uniformly over $t_0 \in [0,T]$ and $t \in [t_0,t_0+\delta]$. \label{item:prop11_1}
    \item Fix $u_0 \in V$. If $\sup \{ t \, | \, u_0 \in \mathrm{dom}(\varphi_{t_0}^t)  \} = T < \infty$, then $\lim_{t \to T} \Vert \varphi_{t_0}^t (u_0) \Vert_V = \infty$. \label{item:prop11_2}
    \item $\mathrm{dom} (\varphi_{t_0}^t)$ is open in $V$. Moreover, if $u_0 \in \mathrm{dom} (\varphi_{t_0}^T)$, there is a neighborhood $O$ of $u_0$ such that $O \subset \mathrm{dom} (\varphi_{t_0}^T)$ and the $\varphi_{t_0}^t$, for $t \in [t_0,T]$, are uniformly Lipschitz on $O$. \label{item:prop11_3}
\end{enumerate}
\end{proposition}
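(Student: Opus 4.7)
The plan is to construct $\varphi_{t_0}^t$ as the unique fixed point of the Duhamel-type map
\begin{displaymath}
    \Phi_{u_0}(v)(t) := e^{(t-t_0) \mathcal{A}_{U} } u_0  + \int_{t_0}^t e^{(t-s)\mathcal{A}_{U} } J_s ( v(s) ) \, ds
\end{displaymath}
on a suitable ball in $C([t_0,t_0+\delta],V)$, then to extend to the maximal interval by standard bootstrap arguments. Fixing $u_0 \in V$ and $R > \Vert u_0 \Vert_V$, I would choose $(p,\alpha)$ subject to \eqref{eq:alpharange} so that \eqref{eq:commonplace_bound} applies, and let $L_{R,T}$ be a Lipschitz constant of $J_s$ on $\overline{B_R(0)}$, uniform in $s \in [0,T]$ for some $T < t^*$ (guaranteed by hypothesis). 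Using $J_s(0)=0$, a direct application of \eqref{eq:eAU0}--\eqref{eq:commonplace_bound} yields bounds of the shape
\begin{align*}
    \Vert \Phi_{u_0}(v)(t) \Vert_V &\leq C_0 e^{\omega (t-t_0)} \Vert u_0 \Vert_V + L_{R,T} R \, \tfrac{C_\alpha e^{\omega (t-t_0)}}{1-\alpha} (t-t_0)^{1-\alpha}, \\
    \Vert \Phi_{u_0}(v)(t) - \Phi_{u_0}(w)(t) \Vert_V &\leq L_{R,T} \, \tfrac{C_\alpha e^{\omega (t-t_0)}}{1-\alpha} (t-t_0)^{1-\alpha} \sup_{s \in [t_0,t]} \Vert v(s) - w(s) \Vert_V,
\end{align*}
so that, for $\delta = \delta(R,T)$ small enough, $\Phi_{u_0}$ self-contracts on the ball of radius $R$ in $C([t_0,t_0+\delta],V)$. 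The Banach fixed point theorem then delivers the unique local mild solution.

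Next, Lipschitz dependence on initial data at fixed $t_0$ would follow from subtracting the integral equations satisfied by $\varphi_{t_0}^t(u_0)$ and $\varphi_{t_0}^t(v_0)$ and applying a singular Gronwall inequality (e.g.\ Lemma~7.1.1 of \cite{henry1981}) to absorb the $(t-s)^{-\alpha}$ kernel coming from \eqref{eq:commonplace_bound}. Up to enlarging $R$ so as to contain $\overline{B_r(0)}$, this produces a uniform constant $C_{r,T}$ on a common small time interval, giving \ref{item:prop11_1}. The blow-up alternative \ref{item:prop11_2} is then standard: if $\sup_{t \in [t_0,T)} \Vert \varphi_{t_0}^t(u_0) \Vert_V < \infty$ for some $T < t^*$, the local existence step can be reinitialized at any $t$ sufficiently close to $T$ using the uniform $\delta$, producing a strict extension beyond $T$ and contradicting maximality.

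Statement \ref{item:prop11_3} follows from a compactness argument. The orbit $\{ \varphi_{t_0}^t(u_0) \, | \, t \in [t_0,T] \}$ is compact in $V$ by continuity of the mild solution and hence admits a uniform $V$-bound $R$. Covering $[t_0,T]$ by finitely many subintervals of length at most $\delta(R,T)$ and iterating the Lipschitz estimate from \ref{item:prop11_1} via the semigroup identity $\varphi_{t_0}^t = \varphi_{s}^t \circ \varphi_{t_0}^s$ produces a uniform Lipschitz constant on a neighborhood $O$ of $u_0$; openness is then immediate, since for $v_0 \in O$ the same estimate ensures $\varphi_{t_0}^t(v_0)$ stays $V$-bounded throughout $[t_0,T]$ and the blow-up alternative rules out breakdown before $T$.

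The main obstacle is the singular smoothing kernel: because $e^{t \mathcal{A}_U}: L^p_\sigma \to V$ diverges like $t^{-\alpha}$ as $t \to 0^+$, classical Gronwall is unavailable and one must invoke its singular variant, whose iteration argument is the only delicate step of the proof. The admissible range of $p$ and $\alpha$ - inherited from the embedding $\mathrm{dom}((-\mathcal{A}_{U,\omega})^\alpha) \hookrightarrow V$ via Theorem~\ref{thm:fractdom} and the integrability requirement $\alpha<1$ - is precisely encoded in the hypothesis $2d/(d+2) < p$.
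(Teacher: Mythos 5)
Your proposal is correct and follows essentially the same architecture as the paper's proof: a Duhamel contraction on a ball in $C([t_0,T],V)$ using the smoothing estimate \eqref{eq:commonplace_bound}, the blow-up alternative via re-initialization near a putative finite maximal time, and openness by iterating the short-time Lipschitz bound along the compact orbit. The one technical point of divergence is how you control the singular kernel $(t-s)^{-\alpha}$ in the Lipschitz estimate of statement~(i): you invoke the singular Gronwall inequality (Lemma~7.1.1 of \cite{henry1981}), whereas the paper avoids Gronwall altogether by taking a supremum over $t$ on a short interval and absorbing the integral term into the left-hand side (as in its Lemma~\ref{lemma:lipsic2}), exploiting that $(t-t_0)^{1-\alpha}$ can be made small. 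Both routes are standard and equally valid here, since the statement is local in time; singular Gronwall is slightly stronger (it gives a bound on an arbitrary fixed interval in one stroke without iteration), while the absorption argument is more elementary and is the one the paper reuses elsewhere.
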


\begin{proof}
Under these assumptions the map
\begin{displaymath}
    t \mapsto \int_{t_0}^t e^{(t-s)\mathcal{A}_{U} } J_s (u(s)) \, ds
\end{displaymath}
is well defined and continuous for functions $u$ that are continuous.
Existence/uniqueness follows from a standard contraction mapping argument. Fix $u_0 \in V$, and pick $r_0 > 0$ such that $u_0 \in \overline{B_{r_0}(0)} \subset V$; $r > C_0 r_0$ with $C_0$ as in \eqref{eq:eAU0}. Define
\begin{displaymath}
  X := \left\{ \, u \in C^0 \big([t_0,T],\overline{B_r(0)} \big) \; \big\vert \; u(t_0) = u_0 \, \right\},
\end{displaymath}
a closed subset of $C^0([t_0,T],V)$ and hence a complete metric space with the inherited (standard) metric.
Define $\mathcal{F}:X \to C^0([t_0,T],V)$ by
\begin{displaymath}
  (\mathcal{F}u) (t) := e^{(t-t_0) \mathcal{A}_{U} } u_0  + \int_{t_0}^t e^{(t-s)\mathcal{A}_{U} } J_s (u(s)) \, ds, \qquad t \in [t_0,T].
\end{displaymath}
With $p$ as in the hypothesis of the theorem, we may choose $\alpha < 1$ in accordance with \eqref{eq:alpharange}.
Then, using \eqref{eq:commonplace_bound} and the assumption that $J_t(0) = 0$, we have that
\begin{align}
\sup_{t \in [t_0,T]} \Vert \mathcal{F}u (t) \Vert_V &\leq C_0 \max \{ e^{\omega (T-t_0)},1 \} r_0 \nonumber \\
&+   C_\alpha \frac{(T-t_0)^{1-\alpha}}{1-\alpha} \max \{ e^{\omega (T-t_0)},1 \} \sup_{t \in [t_0,T]} \mathrm{Lip}_{V \to L^p_\sigma} \big( J_t \, | \, \overline{B_r(0)} \big) r. \label{eq:FU_Vbound}
\end{align}
The assumptions ensure that $0<T<t^*$ may be chosen such that the right hand side of \eqref{eq:FU_Vbound} exists and is bounded by $r$, i.e.\ $\mathcal{F} X \subset X$.
Similarly, 
\begin{multline*}
  \sup_{t \in [t_0,T]} \Vert \mathcal{F}u (t) -\mathcal{F}v (t) \Vert_V \leq \\ C_\alpha \frac{(T-t_0)^{1-\alpha}}{1-\alpha} \max \{ e^{\omega (T-t_0)},1 \} \sup_{t \in [t_0,T]} \mathrm{Lip}_{V \to L^p_\sigma} \big( J_t \, | \, \overline{B_r(0)} \big) \sup_{t \in [t_0,T]} \Vert u (t) -v (t) \Vert_V,
\end{multline*}
so the same choice of $T$ guarantees that $\mathcal{F}$ is a contraction.
Thus, there exists a unique fixed point of $\mathcal{F}$ in $X$, which,
if denoted by $\varphi^t_{t_0} (u_0)$, constitutes our local semiflow on $V$.

Statement \ref{item:prop11_1} follows from an argument entirely analogous to Lemma~\ref{lemma:lipsic2}; its proof is therefore omitted.

Statement \ref{item:prop11_2} follows from standard ODE arguments.
Suppose, conversely, that  $$\sup_{t \in [t_0,T)} \Vert \varphi_{t_0}^t (u_0) \Vert_V < \infty.$$ Then, using the integral equation, $\lim_{t \to T} \varphi^t_{t_0} (u_0)$ exists and is bounded (by continuity). 
Repeating the argument above, we may now find a solution on $[T,T+ \varepsilon]$ with initial condition $\varphi^T_{t_0}(u_0)$, for some $\varepsilon > 0$ depending solely on $\Vert \varphi^T_{t_0}(u_0) \Vert_V$.
Since $\varphi_{T}^{T+\varepsilon} \circ \varphi^T_{t_0}  = \varphi_{t_0}^{T+\varepsilon}$, this contradicts the maximality of $T$. 

To see \ref{item:prop11_3}, let $u_0 \in \mathrm{dom}(\varphi_{t_0}^T)$ and choose $r> 0$ such that $\Vert \varphi^t_{t_0}(u_0) \Vert_V \leq r/2$ for $t \in [t_0,T]$.
Statement \ref{item:prop11_1} gives $\delta > 0$ such that $\mathrm{Lip}_V \big(\varphi_{t_0}^t \, | \, \overline{B_r(0)} \big) \leq C_{r,T}$ on $t \in [t_0,t_0+\delta]$, with $t_0 \in [0,T]$.
Choose $k \in \mathbb{N}$ such that $k\delta > T$. Then the set
\begin{displaymath}
    O : = \left\{ \, v \in V \; \vert \; C_{r,T}^k \Vert u_0-v \Vert_V < r/2 \, \right\}
\end{displaymath}
has the required properties with $\mathrm{Lip}_V \big(\varphi_{t_0}^t \, | \, O \big) \leq C_{r,T}^k$ for $t \in [t_0,T]$.
\end{proof}

\begin{remark} \label{remark:globalex}
If $J_t$ in Proposition~\ref{prop:exist} is globally Lipschitz for all $t \geq 0$, then combining statement \ref{item:prop11_2} with standard ODE arguments
yields a globally defined semiflow.
\end{remark}

Next we show that under the additional hypothesis that $J_t \in C^\infty(V,L^p_\sigma)$, we obtain smoothness for the semiflow itself.
If we formally differentiate the equation defining $\varphi_{t_0}^t$ \eqref{eq:prop11main}, we get
\begin{equation}
    D \varphi_{t_0}^t (u_0) [v_0] = e^{(t-t_0) \mathcal{A}_{U} } v_0  + \int_{t_0}^t e^{(t-s)\mathcal{A}_{U} } D J_s (\varphi_{t_0}^s (u_0) ) \circ D \varphi_{t_0}^s (u_0) [v_0] \, ds,
    \label{eq:Dvarphi}
\end{equation}
for some $v_0 \in V$. 
The next proposition confirms that a semiflow satisfying \eqref{eq:Dvarphi} exists, and moreover, that it coincides with the Fréchet derivative of $\varphi_{t_0}^t$ at $u_0$.

\begin{proposition}[Theorem 2.2, \cite{WEISSLER1979}]
\label{prop:smooth}
Let $\mathcal{A}_U$ and $J_t$ be as in Proposition~\ref{prop:exist} and let $\varphi^t_{t_0}$ denote the resulting semiflow.
Suppose further that $J_t$ is continuously Fréchet differentiable infinitely many times, i.e.\ $J_t \in C^{\infty} (V,L^p_\sigma)$ for all $t$, with derivatives depending continuously on $t$.
Then each $\varphi^t_{t_0}$ is $C^{\infty}$ on its domain, with derivatives depending continuously on $t$.
\end{proposition}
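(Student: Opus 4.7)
The plan is to proceed by induction on the order of differentiation $k$, using the integral equation \eqref{eq:prop11main} to set up, at each order, a parametric contraction whose fixed point will be the candidate $k$-th Fréchet derivative of $\varphi_{t_0}^t$.

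For the base case $k=1$, I would first solve \eqref{eq:Dvarphi} for a candidate derivative $\psi(t;u_0) \in \mathcal{L}(V)$. Fix $u_0 \in \mathrm{dom}(\varphi_{t_0}^T)$; by Proposition~\ref{prop:exist}\ref{item:prop11_3} there is a neighborhood $O \ni u_0$ on which $\varphi_{t_0}^{(\cdot)}$ is bounded and Lipschitz over $[t_0,T]$. On the space $X := C^0([t_0, t_0+\delta], \mathcal{L}(V))$ consider
\begin{displaymath}
    (\mathcal{G}_{u_0}\psi)(t) := e^{(t-t_0)\mathcal{A}_U} + \int_{t_0}^t e^{(t-s)\mathcal{A}_U} DJ_s(\varphi_{t_0}^s(u_0)) \circ \psi(s) \, ds.
\end{displaymath}
Using \eqref{eq:commonplace_bound} and the integrable singularity of $(t-s)^{-\alpha}$, exactly as in Proposition~\ref{prop:exist}, $\mathcal{G}_{u_0}$ is a contraction on $X$ for $\delta>0$ small enough, whose fixed point $\psi(\cdot; u_0)$ depends continuously on $u_0 \in O$ by the parametric form of Banach's theorem. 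Standard gluing via the cocycle property $\psi(t;u_0) = \psi(t;\varphi_{t_0}^{t'}(u_0)) \circ \psi(t';u_0)$ extends $\psi$ from $[t_0,t_0+\delta]$ to all of $[t_0,T]$.

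Next I would verify that $\psi(t;u_0) = D\varphi_{t_0}^t(u_0)$. Setting $w_h(t) := \varphi_{t_0}^t(u_0+hv_0) - \varphi_{t_0}^t(u_0) - h\,\psi(t;u_0)[v_0]$, subtracting the relevant integral equations and adding and subtracting $DJ_s(\varphi_{t_0}^s(u_0))\big[\varphi_{t_0}^s(u_0+hv_0) - \varphi_{t_0}^s(u_0)\big]$ under the integral yields
\begin{displaymath}
    w_h(t) = \int_{t_0}^t e^{(t-s)\mathcal{A}_U}\Big(r_h(s) + DJ_s(\varphi_{t_0}^s(u_0))\, w_h(s)\Big)\, ds,
\end{displaymath}
where the remainder $r_h(s)$ is $o(h)$ in $L^p_\sigma$ uniformly in $s \in [t_0,T]$ by the $C^2$ Taylor expansion of $J_s$ combined with the Lipschitz bound on $\varphi_{t_0}^s$ from Proposition~\ref{prop:exist}\ref{item:prop11_3}. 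A singular Gronwall argument based on \eqref{eq:commonplace_bound} then gives $\sup_{t \in [t_0,T]}\Vert w_h(t)\Vert_V = o(h)$, so $\psi = D\varphi$; continuity of $D\varphi_{t_0}^t$ in both $t$ and $u_0$ is inherited from the parametric continuity of the fixed point.

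For the inductive step, suppose $\varphi_{t_0}^t$ is $C^{k-1}$ with derivatives continuous in $t$. Formally differentiating \eqref{eq:Dvarphi} $k-1$ additional times and invoking the Faà di Bruno formula, the $k$-th Fréchet derivative satisfies
\begin{displaymath}
    D^k \varphi_{t_0}^t(u_0) = \int_{t_0}^t e^{(t-s)\mathcal{A}_U} DJ_s(\varphi_{t_0}^s(u_0)) \circ D^k \varphi_{t_0}^s(u_0) \, ds + \Phi_k(t;u_0),
\end{displaymath}
where the forcing term $\Phi_k$ is a polynomial expression in $D^j J_s \circ \varphi_{t_0}^s$ ($2 \leq j \leq k$) and $D^j \varphi_{t_0}^s$ ($j \leq k-1$). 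By the induction hypothesis and the assumed smoothness of $J_s$, $\Phi_k$ is jointly continuous in $(t,u_0)$ valued in the Banach space of bounded $k$-linear maps on $V$. The same parametric contraction scheme as above, posed now on $C^0([t_0,t_0+\delta],\mathcal{L}^k(V,V))$, produces a unique continuous solution; a Taylor remainder estimate analogous to the one for $k=1$ identifies it with $D^k\varphi_{t_0}^t$, and continuity in $t$ is preserved.

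The main technical obstacle is the Taylor remainder estimate that certifies the candidate multilinear map is genuinely the Fréchet derivative. It requires juggling the singular kernel $(t-s)^{-\alpha}$ against the nonlinear increment of $J_s$, controlled uniformly on $[t_0,T]$, and then closing the loop through a singular Gronwall inequality that respects the range of $\alpha$ permitted by \eqref{eq:alpharange}. A secondary but purely combinatorial burden is tracking the Faà di Bruno terms in $\Phi_k$ at higher orders so that the induction hypothesis suffices to bound them.
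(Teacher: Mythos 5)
Your proof is correct in substance, and your base case ($k=1$: set up the linearized integral equation, solve it by a singular-kernel contraction with the $k_r$-type bound from \eqref{eq:commonplace_bound} and \eqref{eq:alpharange}, verify the result is the Gateaux/Fréchet derivative via a remainder estimate, and establish continuity in $u_0$) matches the paper's argument closely, modulo minor bookkeeping: the paper obtains $\theta_{u_0;t_0}^t$ directly by applying Proposition~\ref{prop:exist} with $K_t := DJ_t(\varphi_{t_0}^t(u_0))$ — global in $t$ since $K_t$ is linear — rather than re-running a contraction on a short interval and gluing via the cocycle property, and it uses dominated convergence rather than a $C^2$ Taylor expansion to kill $\mu_h$, which is a mild weakening of hypotheses rather than a substantive change.

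Where you genuinely diverge is the inductive step. You differentiate the integral equation $k$ times, invoke Fa\`a di Bruno to organize $\Phi_k$ as a polynomial in lower-order derivatives of $J$ and of $\varphi$, and re-run the contraction in $C^0([t_0,t_0+\delta],\mathcal{L}^k(V,V))$. The paper avoids this combinatorics entirely with the first-variation trick: once $\varphi_{t_0}^t$ and $D\varphi_{t_0}^t$ are jointly continuous, the augmented map $(u_0,v_0) \mapsto (\varphi_{t_0}^t(u_0), D\varphi_{t_0}^t(u_0)[v_0])$ is itself a semiflow on $V \times V$ of exactly the form treated by Proposition~\ref{prop:exist} (its nonlinearity is built from $J$ and $DJ$, which satisfy the same hypotheses), so the $C^1$ result applies to it verbatim, yielding $C^1$-ness of $D\varphi$, hence $C^2$-ness of $\varphi$, and so on by iteration. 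Your route buys a more explicit description of each $D^k\varphi$ (useful if one wants quantitative estimates at a given order) at the cost of Fa\`a di Bruno bookkeeping and having to re-justify the remainder estimate at every order; the paper's route buys a one-line induction with no combinatorics, at the cost of having to notice that the augmented system is again of the admissible form. Both are valid; the paper's is the shorter path to the stated $C^\infty$ conclusion.
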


\begin{proof}
Fix $u_0 \in V$ and take $K_t = D J_t ( \varphi^t_{t_0}(u_0)) \in \mathcal{L}(V,L^p_\sigma)$, defined for $t \in [t_0, T_{u_0})$, where $T_{u_0}$ is the maximal time of existence associated to $u_0$.
Then,
Proposition~\ref{prop:exist} applied with $K_t$ gives a family of linear maps $\theta_{t_0}^t$ that are defined by
\begin{displaymath}
  \theta_{u_0;t_0}^t (v_0) = e^{(t-t_0) \mathcal{A}_{U} } v_0  + \int_{t_0}^t e^{(t-s)\mathcal{A}_{U} } D J_s (\varphi_{t_0}^s (u_0) ) [ \theta_{u_0;t_0}^s (v_0) ]\, ds
\end{displaymath}
for all times in the original interval $t \in [t_0,T_{u_0})$, by linearity of $K_t$.

Let $T \in (t_0,T_{u_0})$ and choose $r>0$ such that $ \sup_{t \in [t_0,T]} \Vert \varphi^t_{t_0} (u_0) \Vert_V \leq r$. 
Pick $\alpha<1$ according to \eqref{eq:alpharange} again and denote by 
\begin{equation}
    k_r(t_0,t) : = C_\alpha \frac{(t-t_0)^{1-\alpha}}{1-\alpha} \max \{ e^{\omega (t-t_0)},1 \} \sup_{s \in [t_0,t]} \mathrm{Lip}_{V \to L^p_\sigma} \big( J_s \, | \, \overline{B_r(0)} \big),
    \label{eq:kr}
\end{equation}
so that
\begin{equation}
    \left\Vert \int_{t_0}^t e^{(t-s)\mathcal{A}_{U} } D J_s (\varphi_{t_0}^s (u_0) ) ds \right\Vert_{\mathcal{L}(V)}  \leq k_r (t_0,t) \qquad \text{for } t \in [t_0,T]
    \label{eq:krbound}
\end{equation}
with the above choice of $r$. 
Now choose $\tau > t_0$ such that $k_{r}(t_0,\tau) \leq k_{2r}(t_0,\tau) < 1$. 

We next show Gateaux differentiability.
Using Proposition~\ref{prop:exist}\ref{item:prop11_3}, we have a neighborhood $O$ of $u_0$ in $V$ such that $O \subset \mathrm{dom}(\varphi_{t_0}^\tau)$ and $\sup_{t\in [t_0,\tau]}\mathrm{Lip}_V \big( \varphi^t_{t_0} \, | \, O \big) \leq C$.
We claim that $\theta_{u_0;t_0}^t = D^G \varphi_{t_0}^t (u_0)$ for $t \in [t_0,\tau]$, i.e.\ that
\begin{displaymath}
    \Delta_h (t) := \frac{\varphi^t_{t_0}(u_0+h v_0)-\varphi^t_{t_0}(u_0)}{h} - \theta_{u_0;t_0}^t(v_0)
\end{displaymath}
(with $h>0$ small enough such that $u_0 + h v_0 \in O$) converges to $0$ as $h \to 0$ for any $v_0 \in V$.

Writing out $\theta_{u_0;t_0}^t$ and $\varphi^t_{t_0}$ using the integral equations that define them, we get
\begin{equation}
    \Delta_h (t) = \int_{t_0}^t e^{(t-s) \mathcal{A}_U} \mu_h(s) ds +\int_{t_0}^t e^{(t-s) \mathcal{A}_U} DJ_s ( \varphi^s_{t_0} (u_0) ) [\Delta_h (s)] ds,
    \label{eq:deltah}
\end{equation}
where
\begin{multline}
    h \mu_h (s) = J_s \circ \varphi^s_{t_0}(u_0 + h v_0) - J_s \circ \varphi^s_{t_0}(u_0) - DJ_s (\varphi^s_{t_0} (u_0)) [\varphi^s_{t_0}(u_0 + h v_0) - \varphi^s_{t_0}(u_0)]  \\
    = \int_0^1  \Big( DJ_s (\varphi^s_{t_0} (u_0) + \lambda (\varphi^s_{t_0}(u_0 + h v_0) - \varphi^s_{t_0}(u_0)))  - DJ_s (\varphi^s_{t_0} (u_0)) \Big) [\varphi^s_{t_0}(u_0 + h v_0) - \varphi^s_{t_0}(u_0)] d \lambda.
    \label{eq:mu_h}
\end{multline}
Taking the supremum of \eqref{eq:deltah} over $t \in [t_0,\tau]$ and using \eqref{eq:krbound}, we obtain
\begin{displaymath}
    \sup_{t \in [t_0,\tau]} \Vert \Delta_h (t) \Vert_V \leq (1-k_r(t_0,\tau))^{-1} \sup_{t \in [t_0,\tau]} \left\Vert \int_{t_0}^t e^{(t-s) \mathcal{A}_U} \mu_h (s) ds \right\Vert_V.
\end{displaymath}
That the right hand side converges to $0$ as $h \to 0$ follows from the latter representation of $\mu_h$ in \eqref{eq:mu_h} and the dominated convergence theorem.

To conclude that $\varphi_{t_0}^t$ is continuously Fréchet differentiable at $u_0$ (for $t \in [t_0,\tau]$), it suffices to check that $u \mapsto \theta_{u; t_0}^t$ is norm continuous (as a map $V \to \mathcal{L}(V)$) on some open neighbourhood $O_2$ of $u_0$.
Using Proposition~\ref{prop:exist}\ref{item:prop11_3}, take $O_2 \subset O$ such that $\sup_{t \in [t_0,\tau]} \Vert \varphi_{t_0}^t (u) \Vert_V \leq 2r$ for all $u \in O_2$.
Now let $u_n \to u_0$ in $O_2$, and consider 
\begin{align*}
    \theta_{u_n;t_0}^t (v_0) - \theta_{u_0;t_0}^t (v_0) &=
    \int_{t_0}^t e^{(t-s) \mathcal{A}_U} DJ_s (\varphi^s_{t_0}(u_n)) [\theta_{u_n;t_0}^s (v_0) - \theta_{u_0;t_0}^s (v_0)] ds \\
    &+ \int_{t_0}^t e^{(t-s) \mathcal{A}_U} \left( DJ_s (\varphi^s_{t_0}(u_n)) [\theta_{u_0;t_0}^s (v_0)] - DJ_s (\varphi^s_{t_0}(u_0)) [\theta_{u_0;t_0}^s (v_0)] \right) ds
\end{align*}
for any $v_0 \in V$.
Repeating the procedure above, we obtain
\begin{multline*}
    (1-k_{2r}(t_0,\tau))  \sup_{t \in [t_0,\tau]} \Vert \theta_{u_n;t_0}^t (v_0) - \theta_{u_0;t_0}^t (v_0) \Vert_V \leq \\ \sup_{t \in [t_0,\tau]} \left\Vert \int_{t_0}^t e^{(t-s) \mathcal{A}_U} \left( DJ_s (\varphi^s_{t_0}(u_n)) [\theta_{u_0;t_0}^s (v_0)] - DJ_s (\varphi^s_{t_0}(u_0)) [\theta_{u_0;t_0}^s (v_0)] \right) ds \right\Vert_V,
\end{multline*}
where again, the right hand side converges to $0$ by the dominated convergence theorem, as $n \to \infty$.

So far, we have proven that $\varphi_{t_0}^t$ is continuously Fréchet differentiable in a neighbourhood of $u_0$ for $t \in [t_0,\tau]$.
To extend the result to the full time of existence, choose a sequence $\tau_n$ inductively such that $k_{2r}(\tau_{n-1},\tau_n) < 1$.
By the local (in time) uniformity assumption on the Lipschitz constant of $J_t$, the sequence $\tau_n$ may be chosen such that 
$\tau_N \geq T$ for some finite $N$.
Up to possibly shrinking $\tau_N$, we may write 
$\varphi_{t_0}^T = \varphi_{\tau_{N-1}}^{\tau_N} \circ \cdots \circ \varphi_{t_0}^{\tau_1}$ and therefore
\begin{displaymath}
    D \varphi_{t_0}^T (u_0) = D \varphi_{\tau_{N-1}}^{\tau_N} ( \varphi_{t_0}^{\tau_{N-1}}(u_0)) \circ \cdots \circ D \varphi_{t_0}^{\tau_1} (u_0),
\end{displaymath}
where the right hand side is continuous in $u_0$.
Since $u_0$ was arbitrary, the result follows for all of $V$.

The same argument applies to the semiflow given by
\begin{displaymath}
    (u_0,v_0) \mapsto (\varphi_{t_0}^t (u_0), D \varphi_{t_0}^t(u_0) [v_0] )
\end{displaymath}
on $V \times V$.
Therefore, the above procedure can be iterated to yield $C^\infty$ smoothness of the semiflow $\varphi_{t_0}^t$.
\end{proof}

\begin{corollary}
\label{cor:jointsmoothness}
In the setting of Proposition~\ref{prop:smooth},
suppose that $J_t$ depends on a set of $m$ parameters $\mu \in O \subset \mathbb{R}^m$
such that $J_t : V \times O \to L^p_\sigma,$  $(u,\mu) \mapsto J_t(u,\mu)$ is jointly smooth.
Then the resulting semiflow $\varphi_{t_0}^t$ determined by 
\begin{displaymath}
       \varphi_{t_0}^t (u_0,\mu) = e^{(t-t_0) \mathcal{A}_{U} } u_0  + \int_{t_0}^t e^{(t-s)\mathcal{A}_{U} } J_s ( \varphi_{t_0}^s (u_0,\mu),\mu) \, ds
\end{displaymath}
is also jointly smooth on $V \times O$, for each $t$ where defined.
If $J:(t,u,\mu) \mapsto J_t(u,\mu)$ is jointly $C^\infty$, then $\varphi_{t_0}$ is jointly $C^\infty$ on $(\mathcal{D} \cap ( (t_0,\infty) \times V) ) \times O$.
\end{corollary}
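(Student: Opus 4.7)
The plan is to reduce both assertions to Proposition~\ref{prop:smooth} by the classical device of absorbing the parameters into the phase space. On the augmented Hilbert space $\widetilde{V} := V \times \mathbb{R}^m$, define the sectorial operator $\widetilde{\mathcal{A}}_U := \mathcal{A}_U \oplus 0$ acting on $\mathrm{dom}_V(\mathcal{A}_U) \times \mathbb{R}^m$, and the nonlinearity $\widetilde{J}_t : \widetilde{V} \to L^p_\sigma \times \mathbb{R}^m$ given by $\widetilde{J}_t(u,\mu) := (J_t(u,\mu), 0)$. Since the direct sum of sectorial operators is sectorial, the semigroup factors as $e^{t\widetilde{\mathcal{A}}_U} = e^{t\mathcal{A}_U} \oplus \mathrm{id}_{\mathbb{R}^m}$ and inherits the smoothing estimate \eqref{eq:commonplace_bound}. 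Under the hypothesis that $(u,\mu) \mapsto J_t(u,\mu)$ is jointly smooth, $\widetilde{J}_t$ satisfies the assumptions of Propositions \ref{prop:exist} and \ref{prop:smooth} on $\widetilde{V}$ (with Lipschitz constants locally uniform in $t$ inherited from those of $J_t$, restricted to the slice $V \times O$).

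The trivial dynamics in the $\mu$ component ensure that the semiflow generated by the augmented system is precisely $\widetilde{\varphi}_{t_0}^t(u_0,\mu) = (\varphi_{t_0}^t(u_0,\mu), \mu)$. Applying Proposition~\ref{prop:smooth} on $\widetilde{V}$ yields $C^\infty$ smoothness of $\widetilde{\varphi}_{t_0}^t$ with respect to its initial condition $(u_0,\mu) \in \widetilde{V}$, which is exactly the joint smoothness of $\varphi_{t_0}^t$ in $(u_0,\mu) \in V \times O$ claimed in the first assertion. Local openness of the domain and persistence of Lipschitz bounds, needed to justify iterating the differentiation argument, follow verbatim from Proposition~\ref{prop:exist}\ref{item:prop11_3} applied to $\widetilde{J}$.

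For the second assertion, joint $C^\infty$ regularity on $(\mathcal{D} \cap ((t_0,\infty) \times V)) \times O$, I would exploit the instantaneous smoothing property of the analytic semigroup: for any $t > t_0$, $\varphi_{t_0}^t(u_0,\mu)$ lies in $\mathrm{dom}_V(\mathcal{A}_U)$ and satisfies the differential form of \eqref{eq:prop11main}, namely
\begin{equation*}
    \partial_t \varphi_{t_0}^t(u_0,\mu) = \mathcal{A}_U \varphi_{t_0}^t(u_0,\mu) + J_t(\varphi_{t_0}^t(u_0,\mu),\mu).
\end{equation*}
Given the joint spatial/parameter smoothness just established and the assumed joint $C^\infty$ regularity of $(t,u,\mu) \mapsto J_t(u,\mu)$, the right-hand side is $C^\infty$ in $(u_0,\mu)$, and a bootstrap argument, differentiating the identity in $t$ and recycling \eqref{eq:alphabound} to absorb the unbounded operator $\mathcal{A}_U$ via intermediate fractional-power spaces, produces arbitrarily many $t$-derivatives with continuous dependence on $(t,u_0,\mu)$. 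Alternatively, one can append the trivial equation $\dot{s} = 1$ to the augmented system, converting the $t$-dependence of $J_t$ into an autonomous problem on $V \times \mathbb{R}^m \times \mathbb{R}$, and invoke Proposition~\ref{prop:smooth} once more.

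The main obstacle will be the bootstrap in $t$: one must verify that each successive $t$-derivative, which schematically produces additional factors of $\mathcal{A}_U$ acting on $\varphi_{t_0}^t$, lies in a suitable fractional domain so that \eqref{eq:alphabound} applies uniformly on compact subsets of $(t_0,\infty) \times V \times O$. This is essentially the parabolic regularity theorem for sectorial operators (as in Henry~\cite{henry1981}, Section~3.4), and the joint continuity of mixed derivatives in $(t,u_0,\mu)$ follows once all individual partial derivatives are shown to exist and be locally bounded on the specified open set.
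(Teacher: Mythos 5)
For the first assertion, your augmentation trick --- working on $\widetilde{V} := V \times \mathbb{R}^m$ with $\widetilde{\mathcal{A}}_U := \mathcal{A}_U \oplus 0$ and the $\mu$-component frozen --- is a legitimate and arguably cleaner route than the paper's terse ``rerun the argument of Proposition~\ref{prop:smooth}.'' The only thing worth flagging is that Proposition~\ref{prop:exist} assumes $J_t(0)=0$, so for the augmented system you need $\widetilde{J}_t(0,\mu)=(J_t(0,\mu),0)$ to vanish (or, more honestly, to note that the contraction argument survives a bounded inhomogeneous term); as written you absorb this silently.

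The second assertion is where you have a genuine gap. The paper handles time-regularity via Henry's rescaling device (his Corollary~3.4.6): replace $J_s$ by $\tilde{J}_s(u,\tilde{\mu})=\tfrac{1}{k}J_{t_0+s/k}(u,\mu)$, extend the parameter to $\tilde{\mu}=(\mu,k,t_0)$, and set $k=(t-t_0)^{-1}$ so that
\begin{equation*}
\varphi_{t_0}^t(\,\cdot\,,\mu)=\tilde{\varphi}_0^1(\,\cdot\,,(\mu,(t-t_0)^{-1},t_0)).
\end{equation*}
The evolution time of the rescaled system is held fixed at $1$, so the first assertion (smoothness in initial data and parameters) directly delivers joint smoothness in $(t,t_0,u_0,\mu)$ on the open set $t>t_0$, with no bootstrap needed. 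Neither of your two alternatives actually closes the loop. The autonomization $\dot{s}=1$ turns the prescribed time $t_0$ into a state variable and gives smoothness in $(u_0,t_0,\mu)$ for each fixed evolution time $\tau$, but Proposition~\ref{prop:smooth} says nothing about dependence on $\tau$ itself --- the flow's dependence on the final time is exactly what you are trying to establish, and cannot be recovered from smoothness in the initial condition alone. Your bootstrap alternative is the conceptually correct parabolic-regularity route, but you explicitly leave it as the ``main obstacle,'' so as written it is not a proof: it requires showing each $\partial_t^k\varphi$ lands in an appropriate fractional-power domain so that $\mathcal{A}_U$ can be applied again, and that all these objects depend continuously on $(t,u_0,\mu)$. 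The rescaling trick is what makes this an actual corollary rather than a separate regularity theorem; you should identify and use it.
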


\begin{proof}
The first assertion follows from the  same argument as  in Proposition~\ref{prop:smooth}.

For the second part, we employ the same trick as \cite{henry1981}, Corollary~3.4.6.
For any $k> 0$, define 
\begin{displaymath}
   \tilde{\varphi}_0^\tau (u_0, \tilde{\mu} ) : = e^{ \tau \frac{1}{k}\mathcal{A}_{U} } u_0  + \int_0^\tau e^{(\tau-s) \frac{1}{k} \mathcal{A}_{U} } \tilde{J}_s (\tilde{\varphi}_{0}^s (u_0, \tilde{\mu}),\tilde{\mu}) \, ds
\end{displaymath}
with $\tilde{\mu} = (\mu,k,t_0)$ and $\tilde{J}_s (u,\tilde{\mu}) : = \frac{1}{k} J_{t_0 + \frac{s}{k}}(u,\mu)$.
We have that
\begin{displaymath}
   \varphi_{t_0}^t( \, \cdot, \mu ) = \tilde{\varphi}_0^{k(t-t_0)} ( \, \cdot,\tilde{\mu})
\end{displaymath}
by the uniqueness property of Proposition~\ref{prop:exist}.
If $t> t_0$, we may take $k:= (t-t_0)^{-1}$ and obtain
\begin{displaymath}
   \varphi_{t_0}^t( \, \cdot, \mu ) = \tilde{\varphi}_0^{1} ( \, \cdot,(\mu,(t-t_0)^{-1},t_0)),
\end{displaymath}
the latter of which is jointly $C^\infty$ over $(\mu,t,t_0)$ for $t > t_0$ by the first assertion.
\end{proof}

The content of Lemma~\ref{lemma:F} was to verify that there exists $p$ for which $F:V \to L^p_\sigma$ satisfies the assumptions of Propositions~\ref{prop:exist} and \ref{prop:smooth} ($p \in (1,2)$ for $d = 2$, $p \in (6/5,3/2]$ for $d= 3$), so that there exists a local (autonomous) semiflow $\varphi$ defined on an open subset of $\mathbb{R}^{\geq 0} \times V$, such that $\varphi_t \in C^{\infty}(V)$ for each $t$, where defined.
Moreover, by Corollary~\ref{cor:jointsmoothness}, $\varphi:(t,u_0) \mapsto \varphi_t(u_0)$ is jointly $C^\infty$ over $\mathcal{D} \cap (\mathbb{R}^{>0} \times V)$.

Trajectories of the above semiflow are solutions of the integral equation
\begin{equation}
    \varphi_t (u_0) = e^{t \mathcal{A}_{U} } u_0  + \int_{0}^t e^{(t-s)\mathcal{A}_{U} } F \circ \varphi_s (u_0) \, ds,
    \label{eq:integralform}
\end{equation}
and their relation to the original equation \eqref{eq:main} is apriori unknown.
Results of Kato and Fujita~\cite{Kato1962} lifted to a somewhat more general setting \cite{WEISSLER1980} (permitting the use of operator $\mathcal{A}_U$ in place of $\mathcal{A}_0$)
yield the following.
If $u_0 \in V$, then the solution constructed via the integral equation satisfies the original equation \eqref{eq:main} as an equation on $H$ for $t \in (0,T_{u_0})$. 
In particular, $\varphi_t (u_0) \in \mathrm{dom}_2 (\mathcal{A}_U)$ for $t \in (0,T_{u_0})$.
For $u_0 \in \mathrm{dom}_2 (\mathcal{A}_U)$, $\varphi_t(u_0)$ is differentiable in time at $t = 0$ and solves \eqref{eq:main} over $t \in [0,T_{u_0})$ in $H$.

In the two-dimensional case ($d = 2$), global existence of solutions is well known (see, for instance, \cite{robinson2001}) and therefore $\varphi:\mathcal{D} \to V$ extends to define a global semiflow on $\mathbb{R}^{\geq 0} \times V $.  

If equation \eqref{eq:main_cutoff} is considered instead, we obtain a global semiflow $\varphi^{\rho}:\mathbb{R}^{\geq 0} \times V \to V$ in both the two- and three-dimensional cases, with $\varphi^\rho \in C^{\infty}(V)$ for $t \geq 0$ (jointly smooth for $t>0$),
upon applying the preceding propositions
with $J_t \equiv F_\rho$, considering that $F_\rho:V \to L^p_\sigma$ is globally Lipschitz (see Lemma~\ref{lemma:F} and Remark~\ref{remark:globalex}).

\section*{Acknowledgments}
The author thanks Rich Kerswell and Edriss Titi for the useful discussions.
The author gratefully acknowledges the support of the Harding Foundation through a PhD scholarship (https://www.hardingscholars.fund.cam.ac.uk).

\bibliographystyle{alpha}
\bibliography{ex_article}

\end{document}